\theoremstyle{plain}
\newtheorem{theorem}{Theorem}[section]
\newtheorem{corollary}[theorem]{Corollary}
\newtheorem{proposition}[theorem]{Proposition}
\newtheorem{assumption}{Assumption} 
\theoremstyle{remark}
\newtheorem{remark}[theorem]{Remark}
\numberwithin{equation}{section}
\newcommand{\C}{\mathbb{C}}
\newcommand{\R}{\mathbb{R}}
\newcommand{\Z}{\mathbb{Z}}
\newcommand{\N}{\mathbb{N}}
\renewcommand{\Im}{\operatorname{Im}}
\renewcommand{\Re}{\operatorname{Re}}
\def\({\left(}
\def\){\right)}
\def\<{\left\langle}
\def\>{\right\rangle}
\def\le{\leqslant}
\def\ge{\geqslant}
\newcommand{\eps}{\varepsilon}
\newcommand{\pt}{\partial}
\DeclareMathOperator{\sign}{sign}
\DeclareMathOperator{\rank}{rank}
\DeclareMathOperator{\tr}{tr}
\DeclareMathOperator{\Span}{span}
\newcommand{\todayd}{\the\year/\the\month/\the\day}
\theoremstyle{definition}
\newcommand{\ol}{\overline}
\renewcommand{\phi}{\varphi}
\renewcommand{\epsilon}{\varepsilon}
\renewcommand{\tilde}{\widetilde}
\begin{document}
\title[]
{Non-Polynomial Conserved Quantities for ODE Systems and its Application to the Long-Time Behavior of Solutions to Cubic NLS Systems}

\author{Satoshi Masaki}
\address{Department of Mathematics, 
Faculty of Science, Hokkaido University,
Kita 10, Nishi 8, Kita-Ku, Sapporo, Hokkaido, 060-0810, Japan}
\email{masaki@math.sci.hokudai.ac.jp}

\author{Jun-ichi Segata}
\address{Faculty of Mathematics, Kyushu University, 
Fukuoka, 819-0395, Japan}
\email{segata@math.kyushu-u.ac.jp}

\author{Kota Uriya}
\address{Department of Applied Mathematics, Faculty of Science, 
Okayama University of Science, Okayama, 700-0005, Japan}
\email{uriya@xmath.ous.ac.jp}

\keywords{Nonlinear Schr\"{o}dinger equation, Asymptotic behavior of solutions, 
Long-range scattering, Conservation laws}
\subjclass[2010]{Primary 35Q55, Secondary 35A22, 35B40}

\maketitle

\begin{abstract}
%
%
In this paper, we investigate the asymptotic behavior of small solutions to the initial value problem for a system of cubic nonlinear Schr\"odinger equations (NLS) in one spatial dimension. We identify a new class of NLS systems for which the global boundedness and asymptotics of small solutions can be established, even in the absence of any effective conserved quantity. The key to this analysis lies in utilizing conserved quantities for the reduced ordinary differential equation (ODE) systems derived from the original NLS systems. In a previous study, the first author investigated conserved quantities expressed as quartic polynomials. In contrast, the conserved quantities considered in the present paper are of a different type and are not necessarily polynomial.
\end{abstract}

\section{Introduction}
In this paper, we consider the large time behavior of solutions to
the initial value problem of the system of cubic nonlinear Schr\"odinger equations in one space dimension:
\begin{equation}
  \label{eq:2NLS}
  \left\{
  \begin{aligned}
     & i\partial_t u_1 + \frac12 \partial_x^2 u_1 =
    F_1(u_1,u_2), \quad t \in \R,\ x \in \R,                                \\
     & i\partial_t u_2 + \frac12 \partial_x^2 u_2 =
    F_2(u_1,u_2),
    \quad t \in \R,\ x \in \R,                                              \\
     & u_1(0,x) = u_{1,0}(x), \quad u_2(0,x) = u_{2,0}(x), \qquad x \in \R,
  \end{aligned}
  \right.
\end{equation}
where $u_j:\R\times\R\to\C$ ($j=1,2$) are unknown functions
and $u_{j,0}:\R\to\C$ ($j=1,2$) are given functions. The nonlinear terms
$F_1$ and $F_2$ are given by
\begin{equation*}
  \begin{aligned}
    F_1(u_1,u_2)
     & = c_1|u_1|^2u_1 + c_2|u_1|^2u_2 + c_3u_1^2\ol{u_2}
    + c_4 u_1|u_2|^2 + c_5 \ol{u_1}u_2^2 + c_6|u_2|^2u_2,          \\
    F_2(u_1,u_2)
     & = c_7|u_1|^2u_1 + c_8|u_1|^2u_2 + c_9u_1^2\ol{u_2}
    + c_{10} u_1|u_2|^2 + c_{11} \ol{u_1}u_2^2 + c_{12}|u_2|^2u_2, \\
  \end{aligned}
\end{equation*}
and
$ c_j\ (j = 1, \cdots, 12)$ are real constants.
Our aim is to show the global existence of the solution 
to (\ref{eq:2NLS}) for small initial data and 
to derive their asymptotic behavior under suitable conditions on the parameters of 
the nonlinearities. It is well-known that the cubic nonlinearities are critical 
in determining the asymptotic behavior of solutions in one dimension, making this an important and intriguing 
problem. 
We provide two new types of systems of the form \eqref{eq:2NLS}. A typical examples in our scope are
\begin{equation}
  \label{eq:2NLSa}
  \left\{
  \begin{aligned}
     & i\partial_t u_1 + \frac12 \partial_x^2 u_1 =
    (\zeta+1) |u_1|^2 u_2 +\zeta |u_2|^2u_2, \quad t \in \R,\ x \in \R,                                \\
     & i\partial_t u_2 + \frac12 \partial_x^2 u_2 =
    2\zeta |u_1|^2 u_1+ 2\zeta u_1 |u_2|^2 +\ol{u_1}u_2^2 ,
    \quad t \in \R,\ x \in \R                                                \end{aligned}
  \right.
\end{equation}
with $\zeta \in \R\setminus\{0\}$
and
\begin{equation}
  \label{eq:2NLSb}
  \left\{
  \begin{aligned}
     & i\partial_t u_1 + \frac12 \partial_x^2 u_1 =
    6|u_1|^2u_1 +3|u_1|^2u_2 +u_1^2\ol{u_2}, \quad t \in \R,\ x \in \R,                                \\
     & i\partial_t u_2 + \frac12 \partial_x^2 u_2 =
    -u_1|u_2|^2 - 4 |u_2|^2 u_2,
    \quad t \in \R,\ x \in \R.
    \end{aligned}
  \right.
\end{equation}

Considering the single gauge invariant cubic nonlinear Schr\"odinger equation:
\begin{equation}\label{eq:NLS}
  i\partial_t u + \frac{1}{2}\partial_x^2 u = \lambda|u|^2u, \quad t \in \R,\  x \in \R
\end{equation}
with $\lambda \in \C$, 
two typical asymptotic behavior of the solution are known, depending on whether $\lambda \in \C$ is a 
real number or a pure imaginary number. 
When $\lambda$ is a real number, Ozawa~\cite{Oz} showed the modified scattering of the small solution to \eqref{eq:NLS}, 
namely, for a suitable function $u_+$ depending only on the spatial variable, 
\begin{equation}\label{eq:msca}
  u(t,x) = (it)^{-\frac12}e^{\frac{i|x|^2}{2t}}\widehat{u_+}\left(\frac{x}{t}\right)
  \exp(-i\lambda|\widehat{u_+}(\tfrac{x}{t})|\log t) + o(t^{-\frac12})
\end{equation}
in $L^\infty$ as $t \to \infty$ 
(see also \cites{GiOz,HaNa,HaNa2,IfTa,KaPu,LS}). 
When $\lambda$ is a pure imaginary number, 
Shimomura~\cite{Shi} showed that the equation has the dissipative property in one time direction 
(See also \cite{KiSh,HaNaSu}). 
Kita~\cite{Ki} showed a counter part of the above result, namely small data blow-up occurs in converse time direction. 


 
In studying the general system of cubic nonlinear Sch\"odinger equations \eqref{eq:2NLS}, we encounter the 
various situations depending on the coefficients of the nonlinearity 
\cite{KaSa,Kim,LiNiSaSu1,LiNiSaSu2,LiNiSaSu5,LiSu,LiSu2,MuPu,Ur}. 
In the our previous studies~\cite{MaSeUr1,MaSeUr2}, 
we provided the classification of the system \eqref{eq:2NLS}. 
First, we review it in detail. 
In \cite{MaSeUr1}, we showed that the system \eqref{eq:2NLS} can be identified with a pair of a matrix $\mathscr{A} \in M_3(\R) \simeq \R^9$
and a vector $\mathscr{V} \in \R^3$ as follows:
Given $(c_1, \cdots, c_{12})\in \R^{12}$,
we define
\begin{equation}
  \mathscr{A} :=
  \begin{bmatrix}
    c_2 - c_3 & -c_1 + c_8 - c_9    & - c_7                  \\
    c_5             & -c_3 +c_{11}              & -c_9                   \\
    c_6             & -c_4 + c_5 + c_{12} & -c_{10} + c_{11}
  \end{bmatrix}
\end{equation}
and
\begin{equation}
  \mathscr{V}:=
  \begin{bmatrix}
    c_8-2c_9                                      \\
    \frac12(-c_2+2c_3-c_{10}+2c_{11}) \\
    c_4-2c_5
  \end{bmatrix}.
\end{equation}
Conversely, 
for a given pair $(\mathscr{A}=(a_{ij})_{1\le i,j \le 3}, (q_k)_{1\le k \le 3} )\in M_3(\R) \times \R^3$, the system of the form \eqref{eq:2NLS} 
is given with the following nonlinearity (see \cite{MaSeUr1});
\begin{equation}\label{E:CVNLS}
\left\{
\begin{aligned}
	F_1(u_1,u_2)={}&
-(a_{12}+a_{23}) |u_1|^2 u_1 +a_{11}(2|u_1|^2 u_2 + u_1^2\ol{u_2})+a_{21} (2 u_1 |u_2|^2 + \ol{u_1}u_2^2) \\&+ a_{31} |u_2|^2u_2
	 - (\tr \mathscr{A}) \Re (\overline{u_1} u_2) u_1 +\mathcal{V}(u_1,u_2) u_1, \\
	F_2(u_1,u_2)={}&
-a_{13} |u_1|^2 u_1-a_{23}(2|u_1|^2 u_2 + u_1^2\ol{u_2})-a_{33} (2 u_1 |u_2|^2 + \ol{u_1}u_2^2) \\&+ (a_{21}+a_{32}) |u_2|^2u_2
	 + (\tr \mathscr{A}) \Re (\overline{u_1} u_2) u_2 + \mathcal{V}(u_1,u_2)u_2,
\end{aligned}
\right.
\end{equation}
where $\mathcal{V}(u_1,u_2)=q_1 |u_1|^2 + 2q_2 \Re (\overline{u_1}u_2)  + q_3 |u_2|^2$ is a real-valued quadratic potential.
The matrix $\mathscr{A}$ naturally appears in the analysis of the following system of ordinary differential equations, which is obtained 
 by removing the term $\frac12\partial_x^2u_j$ from the system \eqref{eq:2NLS}:
\begin{equation}\label{eq:ODE}
  \left\{
  \begin{aligned}
    i\partial_t \phi_1 = F_1(\phi_1, \phi_2), \\
    i\partial_t \phi_2 = F_2(\phi_1, \phi_2).
  \end{aligned}
  \right.
\end{equation}
The system \eqref{eq:ODE} is called the \textit{reduced ODE system}. We introduce the four quadratic quantities 
\begin{equation}\label{eq:rhoq}
  \rho_1 = |\phi_1|^2,\quad \rho_2=|\phi_2|^2,\quad  \mathcal{R} = 2\Re(\overline{\phi_1}\phi_2),\quad  \mathcal{I}=2\Im(\overline{\phi_1}\phi_2)
\end{equation}
for a solution $(\varphi_1,\varphi_2)$ to \eqref{eq:ODE}. 
Further, for a vector $\vec{a}={}^t\!(a_1,a_2,a_3)\in\R^3$ and a solution $(\varphi_1,\varphi_2)$ to \eqref{eq:ODE}, we define a quadratic form
\begin{equation}\label{eq:Q}
	\mathcal{Q}(\vec a) := \begin{bmatrix}
      \rho_1 & \mathcal{R} & \rho_2
    \end{bmatrix} \vec{a}= \begin{bmatrix}
    \overline{\varphi_1} &
    \overline{\varphi_2}
    \end{bmatrix}
    \begin{bmatrix}
      a_1 & a_2 \\ a_2 & a_3
    \end{bmatrix}
   \begin{bmatrix}
   \varphi_1 \\ \varphi_2
	\end{bmatrix}.
\end{equation}
Following proposition provides the time evolution of the 
quadratic form $\mathcal{Q}(\vec{a})$ along the flow given by \eqref{eq:ODE}.
\begin{proposition}[\cite{MaSeUr2}] \label{prop:ode}
  For any $\vec{a}\in \R^3$, it holds that
  \begin{equation}
    \frac{d}{dt} \mathcal{Q}(\vec{a})
    = 2\mathcal{I} \mathcal{Q}(\mathscr{A}\vec{a}).
  \end{equation}
  In particular, if $\vec{a} \in \ker \mathscr{A}$ then $\mathcal{Q}(\vec{a})$ is independent of time.
\end{proposition}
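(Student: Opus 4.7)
The plan is a direct computation. Since $\mathcal{Q}(\vec{a}) = a_1\rho_1 + a_2\mathcal{R} + a_3\rho_2$, the task reduces to evaluating the time derivatives of $\rho_1$, $\rho_2$, and $\mathcal{R}$ along \eqref{eq:ODE} and matching the outcome with $2\mathcal{I}\,\mathcal{Q}(\mathscr{A}\vec{a})$. A short computation using $\partial_t\phi_j = -iF_j$ yields
\[
\tfrac{d\rho_1}{dt}=2\Im(\ol{\phi_1}F_1),\qquad \tfrac{d\rho_2}{dt}=2\Im(\ol{\phi_2}F_2),\qquad \tfrac{d\mathcal{R}}{dt}=2\Im(\ol{\phi_1}F_2+\ol{\phi_2}F_1),
\]
so that
\[
\tfrac{d}{dt}\mathcal{Q}(\vec{a}) = 2a_1\Im(\ol{\phi_1}F_1) + 2a_2\Im(\ol{\phi_1}F_2+\ol{\phi_2}F_1) + 2a_3\Im(\ol{\phi_2}F_2).
\]

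Next I would substitute the canonical decomposition \eqref{E:CVNLS} and expand each imaginary part using the elementary pointwise identities
\[
\Im(\ol{\phi_1}\phi_2)=\tfrac12\mathcal{I},\qquad \Im\bigl((\ol{\phi_1}\phi_2)^2\bigr)=\tfrac12\mathcal{R}\mathcal{I},\qquad \Im(|\phi_j|^2\phi_j)=0.
\]
In the diagonal cases $\dot\rho_1$ and $\dot\rho_2$, both the real-valued potential term $\mathcal{V}(\phi_1,\phi_2)\phi_j$ and the trace term $\pm(\tr\mathscr{A})\Re(\ol{\phi_1}\phi_2)\phi_j$ in \eqref{E:CVNLS} contribute only real scalars times $|\phi_j|^2$, so they vanish after $\Im$; the surviving $a_{ij}$-weighted cubics then assemble into $\mathcal{I}$ times the first (respectively third) column of $\mathscr{A}$ contracted against $(\rho_1,\mathcal{R},\rho_2)^T$. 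In the off-diagonal case $\dot{\mathcal{R}}$, the potential contributions cancel between $\ol{\phi_1}F_2$ and $\ol{\phi_2}F_1$, while the trace contribution does not vanish on its own but combines with the explicit $a_{11}$ and $a_{33}$ pieces via $(\tr\mathscr{A})-a_{11}-a_{33}=a_{22}$ to reconstruct the $a_{22}\mathcal{R}$ coefficient---necessarily so, since $a_{22}$ does not appear explicitly in \eqref{E:CVNLS}. Collecting the three expressions against $a_1,a_2,a_3$ matches $\mathcal{Q}(\mathscr{A}\vec{a})$ column-by-column and yields the claimed identity.

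The last assertion is then immediate: if $\mathscr{A}\vec{a}=0$, then $\mathcal{Q}(\mathscr{A}\vec{a})\equiv 0$, hence $\mathcal{Q}(\vec{a})$ is conserved along the flow. The main obstacle is the $\dot{\mathcal{R}}$ calculation, where both the potential cancellation and the reconstruction of the $a_{22}$-coefficient out of the trace together with the explicit $a_{11}$ and $a_{33}$ pieces are algebraic identities carefully engineered by the decomposition \eqref{E:CVNLS} of \cite{MaSeUr1}; once these are verified, the rest is pure bookkeeping.
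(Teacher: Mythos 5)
Your overall strategy---differentiate $\rho_1,\mathcal{R},\rho_2$ along \eqref{eq:ODE} and match the result against the columns of $\mathscr{A}$---is the natural one; note that the paper itself gives no proof (the proposition is quoted from \cite{MaSeUr2}), so a direct verification like yours is exactly what is called for. Your structural claims also check out: the potential term and the trace term are real multiples of $|\phi_j|^2$ and so drop out of $\dot\rho_1$ and $\dot\rho_2$; in $\dot{\mathcal{R}}$ the two potential contributions add up to a real quantity and vanish under $\Im$; and the trace contribution combines with the explicit $a_{11}$ and $a_{33}$ pieces through $\tr\mathscr{A}-a_{11}-a_{33}=a_{22}$ to produce the middle column, exactly as you say.

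There is, however, a factor-of-two loose end that you assert away rather than resolve. With the definitions as stated in this paper, namely $\mathcal{I}=2\Im(\overline{\phi_1}\phi_2)$, $\mathcal{R}=2\Re(\overline{\phi_1}\phi_2)$, and $\mathscr{A}$ built from the $c_j$ as in the displayed formula, your own intermediate identities (e.g.\ $\dot\rho_1=\mathcal{I}\,(a_{11}\rho_1+a_{21}\mathcal{R}+a_{31}\rho_2)$, i.e.\ $\mathcal{I}$ times the first-column contraction, which is what your stated pointwise identities give) sum to $\frac{d}{dt}\mathcal{Q}(\vec a)=\mathcal{I}\,\mathcal{Q}(\mathscr{A}\vec a)$, not to the stated $2\mathcal{I}\,\mathcal{Q}(\mathscr{A}\vec a)$, so your closing sentence ``yields the claimed identity'' does not follow from what precedes it. A one-line test makes the point: for $F_1=|u_1|^2u_2$, $F_2=0$ one has $a_{11}=c_2-c_3=1$ and all other entries of $\mathscr{A}$ zero, while a direct computation gives $\dot\rho_1=2\rho_1\Im(\overline{\phi_1}\phi_2)=\rho_1\mathcal{I}=\mathcal{I}\,\mathcal{Q}(\mathscr{A}\vec e_1)$, half of $2\mathcal{I}\,\mathcal{Q}(\mathscr{A}\vec e_1)$. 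So either the constant in the displayed identity should be $\mathcal{I}$ under the present normalization, or the conventions of \cite{MaSeUr2} (where $\mathcal{R},\mathcal{I}$ are normalized differently) have to be imported and reconciled; a complete write-up must do one or the other explicitly rather than declare that the factors match. The discrepancy is harmless for the substance: the conservation of $\mathcal{Q}(\vec a)$ for $\vec a\in\ker\mathscr{A}$, Theorem \ref{thm:cons}, and every later use of the identity (e.g.\ \eqref{eq:tildeode}) depend only on the right-hand side being a fixed scalar multiple of $\mathcal{I}\,\mathcal{Q}(\mathscr{A}\vec a)$, but the constant should be tracked correctly.
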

We introduce the following sets:
\begin{equation}
\begin{aligned}
	\mathcal{P}_+ := \{ {}^t\!(a,b,c) \in \R^3 \ |\ ac>b^2\} \subset \R^3,\\
	\mathcal{P}_0 := \{ {}^t\!(a,b,c) \in \R^3 \ |\ ac=b^2\} \subset \R^3,\\
	\mathcal{P}_- := \{ {}^t\!(a,b,c) \in \R^3 \ |\ ac<b^2\} \subset \R^3.
\end{aligned}
\end{equation}
Note that if $\vec{a} \in \mathcal{P}_+$ then the quadratic form $\mathcal{Q}(\vec{a})$
is sign-definite, i.e., $|\mathcal{Q}(\vec{a})| \sim_{\vec{a}} |\varphi_1|^2 + |\varphi_2|^2$.

From Proposition \ref{prop:ode}, we see that $\rank \mathscr{A}$ indicates the complexity of the system since
the number of the conserved quadratic quantities of the reduced ODE system \eqref{eq:ODE}
is $3 - \rank \mathscr{A}$. 
In \cite{MaSeUr2,Ma}, 
the system \eqref{eq:2NLS} satisfying $\rank \mathscr{A}\le 2$ are classified.
Roughly speaking, there are the following seven types;
\begin{enumerate}
\item $\rank \mathscr{A}=0$;
\item $\rank \mathscr{A}=1$ and $\ker \mathscr{A} \cap \mathcal{P}_+\neq \emptyset$;
\item $\rank \mathscr{A}=1$, $\ker \mathscr{A} \cap \mathcal{P}_+= \emptyset$, and $\ker \mathscr{A} \cap \mathcal{P}_0\neq \{0\}$;
\item $\rank \mathscr{A}=1$, $\ker \mathscr{A} \cap \mathcal{P}_+= \emptyset$, and $\ker \mathscr{A} \cap \mathcal{P}_0= \{0\}$;
\item
$\rank \mathscr{A}=2$ and $\ker \mathscr{A} \cap \mathcal{P}_+\neq \emptyset$;
\item
$\rank \mathscr{A}=2$ and $\ker \mathscr{A} \cap \mathcal{P}_0\neq \{0\}$;
\item
$\rank \mathscr{A}=2$ and $\ker \mathscr{A} \cap \mathcal{P}_-\neq \emptyset$.
\end{enumerate}
The case (1) includes the Manakov system \cite{Manakov} 
which is an integrable system arising in the nonlinear optics.

In the cases (1), (2) and (5),
a structural condition called the \emph{weak null gauge condition} is satisfied. 
As for \eqref{eq:2NLS}, this condition is expressed as
\begin{equation}\label{eq:WNGC}
	\ker \mathscr{A} \cap \mathcal{P}_+ \neq \emptyset
\end{equation}
(see \cite{Ma2}).
If this condition is satisfied, 
then one can find a conserved quadratic form which is equivalent to
$|\varphi_1(t)|^2 + |\varphi_2(t)|^2$ in view of Proposition \ref{prop:ode}.
Hence, solutions to the reduced ODE system  \eqref{eq:ODE} are global and bounded. 
Kim \cite{Kim} studied the global existence and time decay of solution to 
\eqref{eq:2NLS} under the weak null gauge condition (see also Li-Sunagawa \cite{LiSu,LiSu2} 
for \eqref{eq:2NLS} with derivative cubic nonlinearity). Furthermore, 
Katayama-Sakoda~\cite{KaSa} 
proved that, under the weak null gauge condition, the 
asymptotic behavior of the solution of the system 
\eqref{eq:2NLS} is given by
\begin{equation}
  u_j(t,x) = (it)^{-\frac12}e^{\frac{i|x|^2}{2t}}\phi_j\left(\log t, \frac{x}{t}\right) + O(t^{-\delta}),
\end{equation}
for $\delta >1/2$, where $\phi_1$ and $\phi_2$ are the solution to \eqref{eq:ODE} (see also \cite{HLN} 
for the two dimensional case). 
We note that they
considered not only \eqref{eq:2NLS} but also
 more general $N$-coupled systems of cubic nonlinear Schr\"odinger equations, including 
derivatives in the nonlinearity.

In our previous studies \cite{MaSeUr2,KiMaSeUr},
we showed a similar asymptotic results for several examples belonging to the case (3) or (6).
These examples has the special structure;
one equation of the system is the single cubic equation \eqref{eq:NLS} and the other equation is linear with time dependent potential given by the first component.

Our main motivation here is to study the case $\rank \mathscr{A} = 3$. 
We remark that the condition \eqref{eq:WNGC} holds only in the case $\rank \mathscr{A} \le 2$ 
because $\ker \mathscr{A}=\{0\}$ if $\rank \mathscr{A} = 3$.  
%
Recently, the first author~\cite{Ma3} obtained the global existence of the solution for small initial data and its asymptotic 
behavior including the case of $\rank \mathscr{A}=3$. 
He showed that if $\mathscr{A}$ has two pure imaginary eigenvalues, say $\pm ik$ ($k>0$), and if 
$\ker (\mathscr{A}^2 + k^2 E_3) \cap \mathcal{P}_+ \neq \emptyset$, where $E_3$ is the identity matrix of size 3,
 then
the reduced ODE system has the conserved quantity 
which is given as a \emph{quartic quantity} equivalent to 
\begin{equation}
  (|\varphi_1(t)|^2+|\varphi_2(t)|^2)^2. 
\end{equation}
Moreover, the precise asymptotic behavior of the solution is given by explicitly solving the reduced ODE 
for a typical case
(see also \cite{Ma2} for other ``solvable'' cases).

In this paper, we find more systems \eqref{eq:2NLS} which satisfies $\rank \mathscr{A}=3$ and the behavior of small solutions are specified.
More precisely, we prove that \eqref{eq:ODE} has conserved quantities 
which are \emph{not necessarily a polynomial} and described by eigenvalues and eigenvectors of $\mathscr{A}$.
We further find a class of systems of \eqref{eq:2NLS}
for which the global boundedness of solutions to
the corresponding reduced ODE systems \eqref{eq:ODE} is obtained
by utilizing the conservation of these quantities.
%

\subsection{A conserved quantity of \eqref{eq:ODE}}

Before stating our result on \eqref{eq:2NLS}, we state a
 simple result on the reduced ODE system \eqref{eq:ODE}.
This is an immediate consequence of Proposition \ref{prop:ode}.
\begin{theorem}\label{thm:cons}
Let $\mathscr{A}$ be the matrix part of the system \eqref{eq:ODE}.
If $\vec{a}_1 \in \R^3$ and $\vec{a}_2 \in \R^3$ are eigenvectors of $\mathscr{A}$ associated with eigenvalues $\lambda_1$ and $\lambda_2$, respectively, then
\[
	|\mathcal{Q}(\vec{a}_1)|^{\lambda_2} |\mathcal{Q}(\vec{a}_2)|^{-\lambda_1}
\]
is conserved when it is well-defined, where $\mathcal{Q}(\cdot)$ is defined as in \eqref{eq:Q}.
\end{theorem}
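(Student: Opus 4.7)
The plan is to reduce Theorem \ref{thm:cons} directly to Proposition \ref{prop:ode} by observing that along an eigendirection the evolution of $\mathcal{Q}(\vec{a}_j)$ becomes a scalar linear ODE, after which the conservation drops out by a short logarithmic-derivative computation.

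First I would substitute $\vec{a} = \vec{a}_j$ into the identity of Proposition \ref{prop:ode}. Since $\mathscr{A}\vec{a}_j = \lambda_j \vec{a}_j$ and $\mathcal{Q}$ is linear in its argument, this gives
\begin{equation*}
  \frac{d}{dt}\mathcal{Q}(\vec{a}_j) = 2\lambda_j\,\mathcal{I}(t)\,\mathcal{Q}(\vec{a}_j), \qquad j=1,2.
\end{equation*}
Note that $\lambda_j \in \R$ because $\vec{a}_j \in \R^3 \setminus \{0\}$ is an eigenvector of the real matrix $\mathscr{A}$, and $\mathcal{I}(t)$ is real by \eqref{eq:rhoq}. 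Consequently $\mathcal{Q}(\vec{a}_j)(t)$ is given by
\begin{equation*}
  \mathcal{Q}(\vec{a}_j)(t) = \mathcal{Q}(\vec{a}_j)(0)\,\exp\!\left(2\lambda_j\int_0^t \mathcal{I}(s)\,ds\right),
\end{equation*}
so in particular its sign is preserved for all $t$, which is what makes the absolute values and real exponents meaningful.

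Next, on the open set $\{t : \mathcal{Q}(\vec{a}_1)(t)\mathcal{Q}(\vec{a}_2)(t) \neq 0\}$ I would take the logarithmic derivative of $|\mathcal{Q}(\vec{a}_1)|^{\lambda_2}|\mathcal{Q}(\vec{a}_2)|^{-\lambda_1}$:
\begin{equation*}
  \frac{d}{dt}\log\!\bigl(|\mathcal{Q}(\vec{a}_1)|^{\lambda_2}|\mathcal{Q}(\vec{a}_2)|^{-\lambda_1}\bigr)
  = \lambda_2\cdot 2\lambda_1\mathcal{I} - \lambda_1\cdot 2\lambda_2\mathcal{I} = 0.
\end{equation*}
Hence the expression is constant on each connected component where it is defined, which is exactly the content of the theorem.

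There is no real obstacle; the only subtle point worth mentioning explicitly is the justification that $\mathcal{Q}(\vec{a}_j)$ keeps a constant sign (so $|\mathcal{Q}(\vec{a}_j)|^{c}$ is smooth for arbitrary real $c$ as long as $\mathcal{Q}(\vec{a}_j)\ne 0$), which follows from the exponential solution formula above. If one prefers to avoid the absolute values one can equivalently note that the \emph{signed} quantity $\mathcal{Q}(\vec{a}_1)^{\lambda_2}\mathcal{Q}(\vec{a}_2)^{-\lambda_1}$, interpreted via the same exponential, is conserved by the identity
\begin{equation*}
  \mathcal{Q}(\vec{a}_j)(t)^{c} = \mathcal{Q}(\vec{a}_j)(0)^{c}\exp\!\left(2c\lambda_j\int_0^t \mathcal{I}(s)\,ds\right),
\end{equation*}
with $c=\lambda_2$ and $c=-\lambda_1$ respectively, and the cancellation is identical.
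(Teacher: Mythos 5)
Your proposal is correct and follows exactly the route the paper intends: the paper states Theorem \ref{thm:cons} as an immediate consequence of Proposition \ref{prop:ode}, namely that $\frac{d}{dt}\mathcal{Q}(\vec{a}_j)=2\lambda_j\mathcal{I}\,\mathcal{Q}(\vec{a}_j)$ along eigendirections, and the cancellation $\lambda_2\cdot 2\lambda_1\mathcal{I}-\lambda_1\cdot 2\lambda_2\mathcal{I}=0$ you perform is precisely the computation the authors carry out later (for the perturbed quantities $Q_j$ in Step 3 of the proof of Proposition \ref{prop:global}). Your extra remark on sign preservation via the exponential formula is a sound justification of the ``when it is well-defined'' caveat.
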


By utilizing this conserved quantity, one can prove a global existence and boundedness result for a class of reduced ODE system \eqref{eq:ODE}.
Because it is a main ingredient for our main result, let us introduce here.
To specify the class,
let us now introduce assumptions on the system.
To this end, we make a notation.
For a complex number $\lambda$,
we let
\[
	W(\lambda) = W(\lambda,\mathscr{A})
	= \ker (\mathscr{A}- \lambda E_3) \subset \R^3
\]
be the eigenspace, where $E_3$ is the identity matrix of size 3. Note that $W$ contains nonzero element if and only if $\lambda$ is an eigenvalue of $\mathscr{A}$. 
Our new class of systems are those satisfy one of the following two assumptions:
\begin{assumption}\label{A1}
The matrix $\mathscr{A}$ has two distinct nonzero real
eigenvalues $\lambda_1$ and $\lambda_2$ such that
$W(\lambda_j) \cap \mathcal{P}_+\neq \emptyset$ for $j=1,2$.
\end{assumption}
\begin{assumption}\label{A2}
The matrix $\mathscr{A}$ has three distinct real eigenvalues $\lambda_j$
($j=1,2,3$) such that $\lambda_3\neq0$,
$\frac{\lambda_1}{\lambda_3}<1$, and $\frac{\lambda_2}{\lambda_3}<1$.
Further, 
$W(\lambda_1)\cap \mathcal{P}_+=W(\lambda_2)\cap \mathcal{P}_+=\emptyset$,
$(W(\lambda_1)\oplus W(\lambda_2))\cap \mathcal{P}_+\neq \emptyset$, and $W(\lambda_3) \cap \mathcal{P}_+ \neq \emptyset$.
\end{assumption}
\begin{remark}
\begin{enumerate}
\item 
In Assumptions 1 and 2, 
we assumed $\lambda_1$ and $\lambda_2$ are nonzero. This is because if one of them is zero then the weak null gauge condition \eqref{eq:WNGC} is satisfied.
\item
We assumed $W(\lambda_1)\cap \mathcal{P}_+=W(\lambda_2)\cap \mathcal{P}_+=\emptyset$ in Assumption \ref{A2}
because otherwise Assumption \ref{A1} or 
the weak null gauge condition \eqref{eq:WNGC} is fulfilled.
Note that $\lambda_1\lambda_2=0$ is allowed
in Assumption \ref{A2}.
\item 
We do not need to make any assumptions about $\mathscr{V}$.
\end{enumerate}
\end{remark}

The following result holds.

\begin{theorem}\label{thm:ODE}
We assume Assumptions \ref{A1} or \ref{A2}. 
Then any solution to the reduced ODE system
\eqref{eq:ODE} exists globally in time and satisfies
\[
	\sup_{t\in\R} (|\varphi_1(t)|^2+|\varphi_2(t)|^2)
	\lesssim \inf_{t\in\R} (|\varphi_1(t)|^2+|\varphi_2(t)|^2).
\]
\end{theorem}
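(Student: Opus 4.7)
The plan is to derive a two-sided a priori bound of the form $\sup_t M(t) \lesssim \inf_t M(t)$ for $M(t) := |\varphi_1(t)|^2 + |\varphi_2(t)|^2$; global existence then follows from the blow-up alternative applied to the locally Lipschitz ODE system \eqref{eq:ODE}, whose right-hand side is a cubic polynomial. The main tool is Theorem \ref{thm:cons}. Under Assumption \ref{A1}, I pick eigenvectors $\vec{a}_j \in W(\lambda_j) \cap \mathcal{P}_+$ for $j = 1, 2$; after possibly replacing $\vec{a}_j$ by $-\vec{a}_j$ so that $\mathcal{Q}(\vec{a}_j) > 0$, one has $\mathcal{Q}(\vec{a}_j)(t) \sim M(t)$ with constants depending only on $\vec{a}_j$. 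Theorem \ref{thm:cons} then gives that $\mathcal{Q}(\vec{a}_1)^{\lambda_2}\mathcal{Q}(\vec{a}_2)^{-\lambda_1}$ is conserved along the flow, and inserting the two equivalences forces $M(t)^{\lambda_2 - \lambda_1}$ to remain in a fixed bounded range. Since $\lambda_1 \ne \lambda_2$, this pins $M(t)$ down up to multiplicative constants depending only on $\mathscr{A}$, yielding the desired bound.

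Under Assumption \ref{A2}, I choose $\vec{a}_3 \in W(\lambda_3) \cap \mathcal{P}_+$ (with sign adjusted so that $Y(t) := \mathcal{Q}(\vec{a}_3)(t) > 0$, giving $Y \sim M$), together with eigenvectors $\vec{b}_j \in W(\lambda_j)$ for $j = 1, 2$ whose sum $\vec{c} := \vec{b}_1 + \vec{b}_2 \in \mathcal{P}_+$; signs are chosen so that $\mathcal{Q}(\vec{c}) > 0$ as well. Setting $X_j(t) := \mathcal{Q}(\vec{b}_j)(t)$, Theorem \ref{thm:cons} applied to the pair $(\vec{b}_j, \vec{a}_3)$ yields
\[
	|X_j(t)| = K_j\, Y(t)^{\alpha_j}, \qquad \alpha_j := \lambda_j/\lambda_3 < 1, \quad j = 1, 2,
\]
where the nonnegative constants $K_j$ are determined by the initial data. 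Linearity of $\mathcal{Q}(\cdot)$ in its vector argument gives $X_1(t) + X_2(t) = \mathcal{Q}(\vec{c})(t) \sim Y(t)$. The upper bound on $Y$ then follows from the triangle inequality
\[
	Y(t) \lesssim |X_1(t) + X_2(t)| \le K_1\, Y(t)^{\alpha_1} + K_2\, Y(t)^{\alpha_2};
\]
since both exponents are strictly less than $1$, dividing by $Y(t)$ and letting $Y(t) \to \infty$ produces a contradiction, so $Y(t)$ stays bounded above.

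The main technical obstacle is the lower bound on $Y$ under Assumption \ref{A2}. If one of $K_1, K_2$ vanishes, say $K_1 = 0$, then $X_1 \equiv 0$, so $X_2 = \mathcal{Q}(\vec{c}) \sim Y$; combined with $|X_2| = K_2 Y^{\alpha_2}$ this yields $Y(t) \sim K_2^{1/(1-\alpha_2)}$, a positive constant whenever the solution is nontrivial. When both $K_1, K_2 > 0$, I use the reverse triangle inequality together with the fact that $\alpha_1 \ne \alpha_2$ (guaranteed by $\lambda_1 \ne \lambda_2$): letting $\alpha_* := \min(\alpha_1, \alpha_2)$ and $j_*$ the corresponding index, the strict dominance $Y(t)^{\alpha_*} \gg Y(t)^{\alpha_{3-j_*}}$ as $Y(t) \to 0$ gives $\bigl||X_1(t)| - |X_2(t)|\bigr| \ge \tfrac{1}{2} K_{j_*}\, Y(t)^{\alpha_*}$ for $Y(t)$ sufficiently small; combining this with $\bigl||X_1(t)| - |X_2(t)|\bigr| \le |X_1(t) + X_2(t)| \lesssim Y(t)$ forces $Y(t)^{1 - \alpha_*} \gtrsim K_{j_*} > 0$, contradicting $Y(t) \to 0$. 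Hence $\inf_t Y(t) > 0$, which together with the global existence obtained from the blow-up alternative completes the proof.
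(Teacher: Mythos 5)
Your argument is correct, and its backbone is the same as the paper's: the paper proves Theorem \ref{thm:ODE} by running Steps 3 and 4 of the proof of Proposition \ref{prop:global} without the remainder terms, i.e.\ by exploiting exactly the conserved quantities of Theorem \ref{thm:cons} built from eigenvectors chosen according to Assumptions \ref{A1} and \ref{A2}. Under Assumption \ref{A1} your proof coincides with the paper's. Under Assumption \ref{A2} you use the same vectors ($\vec{b}_1+\vec{b}_2\in\mathcal{P}_+$, $\vec{a}_3\in W(\lambda_3)\cap\mathcal{P}_+$) and the same conservation laws, but you integrate them into the explicit relations $|X_j|=K_jY^{\alpha_j}$ and close with a triangle/reverse-triangle inequality and a contradiction argument, whereas the paper instead sums the two conserved quantities $|Q_{+,j}|^{m\theta_j}Q_-^{m(1-\theta_j)}$ and sandwiches the sum between $c\,(|\varphi_1|^2+|\varphi_2|^2)^m$ and $C\,(|\varphi_1|^2+|\varphi_2|^2)^m$. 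The paper's summation trick buys the stated estimate immediately with a constant depending only on the chosen eigenvectors (hence only on $\mathscr{A}$), while your endgame, as written, only delivers the qualitative statement ``$Y$ is bounded above and $\inf Y>0$'' with bounds depending on $K_1,K_2$, i.e.\ on the solution. To literally obtain $\sup_t(|\varphi_1|^2+|\varphi_2|^2)\lesssim\inf_t(|\varphi_1|^2+|\varphi_2|^2)$ with a solution-independent constant you need one more (easy) observation that is already contained in your setup: since the $K_j$ are conserved, evaluating them at an arbitrary time $s$ gives $K_j\le C\,M(s)^{1-\alpha_j}$ with $M=|\varphi_1|^2+|\varphi_2|^2$ and $C$ depending only on the eigenvectors, so your upper-bound inequality $c'Y\le K_1Y^{\alpha_1}+K_2Y^{\alpha_2}$ yields $\sup_tM\le C''M(s)$ for every $s$, hence $\sup_tM\le C''\inf_sM(s)$; with this bookkeeping your separate lower-bound contradiction argument is not even needed. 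Apart from this uniformity point (and the minor remark that the identity $|X_j|=K_jY^{\alpha_j}$ is cleanest to justify directly from $\frac{d}{dt}\mathcal{Q}(\vec{a})=2\mathcal{I}\mathcal{Q}(\mathscr{A}\vec{a})$ in Proposition \ref{prop:ode}, which also shows $X_j$ cannot change sign or vanish later if $X_j(0)\neq0$), your proof is complete and correct.
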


\subsection{Main result}

Now we turn to our main topic, i.e., the asymptotic behavior of small solutions to the NLS system \eqref{eq:2NLS}.
We define the Sobolev space $H^1$ by
\[
H^1=H^1(\R)
=\{f\in{{\mathcal S}}'(\R)\ ; \ \|f\|_{H^1}
=\|f\|_{L^2}+\|\pt_x f\|_{L^2}<\infty\},
\]
and the weighted $L^2$ space by 
\[
H^{0,1}=H^{0,1}(\R)
=\{f\in{{\mathcal S}}'(\R)\ ; \ \|f\|_{H^{0,1}}
=\|f\|_{L^2}+\|xf\|_{L^2}<\infty\}.
\]

\begin{theorem}\label{thm:main}
  We assume either Assumption 1 or Assumption 2.
  Then there exists $\epsilon_0 > 0$ such that
  if $\epsilon := \|u_{1,0}\|_{H^{0,1}} + \|u_{2,0}\|_{H^{0,1}}$
  satisfies $\epsilon \le \epsilon_0$, then there exists a unique solution
  $(u_1, u_2) \in (C(\R; L^2(\R)))^2$ to \eqref{eq:2NLS} satisfying
  $(U(-t)u_1, U(-t)u_2) \in (C(\R;H^{0,1}(\R)))^2$.
  Furthermore, there exist
  $\delta > 0$ and $(\psi_1^\pm, \psi_2^\pm) \in C(\R) \cap L^\infty(\R)$
  with $\|\psi_j^\pm\|_{L^\infty} \lesssim \epsilon\ (j=1,2)$ such that
  \begin{equation}\label{eq:asymptest}
    u_j(t,x) = (it)^{-\frac12}e^{\frac{i|x|^2}{2t}}
    A_j^\pm(\sign(t)\log |t|, \tfrac{x}{t})
    + O(\epsilon|t|^{-\frac34 + \frac{1}{2p}+\delta \epsilon^2})
  \end{equation}
  in $L^p(\R)$ as $t \to \infty$ for $p \in [2,\infty]$, where
  $(A_1^\pm(t,\xi), A_2^\pm(t,\xi))$ is a solution to \eqref{eq:ODE}
  subject to the initial condition
  \begin{equation}
    (A_1^\pm(0,\xi), A_2^\pm(0,\xi)) = (\psi_1^\pm(\xi), \psi_2^\pm(\xi)).
  \end{equation}
\end{theorem}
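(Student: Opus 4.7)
\medskip

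\textbf{Proof plan.} The overall strategy follows the now-standard Hayashi--Naumkin factorization framework for 1D cubic NLS systems, with the decisive new ingredient being the use of the non-polynomial conserved quantities from Theorem~\ref{thm:cons} and the a priori bound of Theorem~\ref{thm:ODE} to close the a priori estimate of the profile in the $L^\infty$-norm. Local well-posedness in $H^1\cap H^{0,1}$ is standard, so the problem is reduced to establishing a global a priori bound and the asymptotic formula.

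First I would set $v_j := U(-t)u_j$ and introduce the profile $\alpha_j(t,\xi) := \mathcal{F}[v_j](t,\xi)$. Using the identity $U(t) = M D_t \mathcal{F} M$ (with $M=e^{i|x|^2/(2t)}$ and $(D_tf)(x)=(it)^{-1/2}f(x/t)$) and the bound $\tnorm{(M-1) f}_{L^p}\lesssim |t|^{-1/2+1/(2p)} \|f\|_{H^{0,1}}$, one derives
\begin{equation*}
u_j(t,x) = (it)^{-1/2} e^{i|x|^2/(2t)}\alpha_j(t,x/t) + R_j(t,x),
\end{equation*}
with $R_j$ an $O(|t|^{-3/4+1/(2p)}\|J v_j\|_{L^2})$ remainder in $L^p$, where $J=x+it\partial_x$. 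Substituting this factorization into \eqref{eq:2NLS}, the leading behaviour of the profile is governed by the finite-dimensional flow
\begin{equation*}
i\partial_\tau \alpha_j(\tau,\xi) = F_j(\alpha_1,\alpha_2)(\tau,\xi) + \mathcal{E}_j(\tau,\xi),\qquad \tau = \sign(t)\log|t|,
\end{equation*}
with an error term $\mathcal{E}_j$ controlled by $|t|^{-\delta_0}\tnorm{Jv}_{L^2}$ uniformly in $\xi$ for some $\delta_0>0$.

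The heart of the argument is the a priori $L^\infty_\xi$ bound on $(\alpha_1,\alpha_2)$, and this is where the conserved quantities enter. Given Assumption \ref{A1} or \ref{A2}, pick eigenvectors $\vec{a}_j\in\R^3$ corresponding to the eigenvalues $\lambda_j$ of $\mathscr{A}$. For each fixed $\xi$, Proposition~\ref{prop:ode} together with the perturbation term $\mathcal{E}_j$ gives
\begin{equation*}
\frac{d}{d\tau}\mathcal{Q}(\vec{a}_j)[\alpha(\tau,\xi)] = 2\lambda_j\,\mathcal{I}[\alpha(\tau,\xi)]\,\mathcal{Q}(\vec{a}_j)[\alpha(\tau,\xi)] + O(|\alpha|\,|\mathcal{E}|).
\end{equation*}
Dividing and combining as in Theorem~\ref{thm:cons}, the quantity $|\mathcal{Q}(\vec{a}_1)|^{\lambda_2}|\mathcal{Q}(\vec{a}_2)|^{-\lambda_1}$ is almost conserved up to an error that accumulates to $\exp(C\int |\mathcal{E}|\,d\tau)$. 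Under either assumption, exactly as in the proof of Theorem~\ref{thm:ODE}, almost-conservation of this quantity, together with the sign-definite quadratic form obtained from an eigenvector (resp.~an element of $W(\lambda_3)\cap\mathcal{P}_+$ or $(W(\lambda_1)\oplus W(\lambda_2))\cap\mathcal{P}_+$), yields the pointwise estimate
\begin{equation*}
|\alpha_1(\tau,\xi)|^2 + |\alpha_2(\tau,\xi)|^2 \lesssim |\alpha_1(0,\xi)|^2 + |\alpha_2(0,\xi)|^2 \lesssim \epsilon^2
\end{equation*}
provided $\int|\mathcal{E}|\,d\tau$ is controlled by a small constant.

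To close the bootstrap, I would run the standard weighted energy estimate for $Jv_j$. The commutator $[J,F_j(u)]$ is handled using $Ju = M(it\partial_x)M^{-1}u$, producing terms of the form $|u|^2|Ju|$; the $L^\infty_x$ norm $\tnorm{u}_{L^\infty}$ is bounded by $C|t|^{-1/2}(\tnorm{\alpha}_{L^\infty_\xi} + |t|^{-1/4}\tnorm{Jv}_{L^2})$ via the factorization above. The $L^\infty_\xi$-bound on $\alpha$ just established gives the sharp $|t|^{-1/2}$ decay rate of $\tnorm{u}_{L^\infty}$ up to an error, so the energy estimate yields $\tnorm{Jv_j}_{L^2}\lesssim \epsilon \Jbr{t}^{\delta\epsilon^2}$ for some small $\delta>0$, which is the source of the $\delta\epsilon^2$ loss in \eqref{eq:asymptest}. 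Inserting this back into the factorization gives the stated $L^p$ asymptotic formula, and a further Cauchy-type argument in $\tau$ on the ODE produces the limit profile $(\psi_1^\pm,\psi_2^\pm)$, which serves as the initial data for $(A_1^\pm,A_2^\pm)$.

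The main obstacle is the non-polynomial nature of the conserved quantity: because the exponents $\lambda_1,\lambda_2$ are real but possibly not integers, the argument must be carried out for each $\xi$ in regions where the denominator $|\mathcal{Q}(\vec{a}_2)|$ does not vanish, and the treatment near the zero set requires exploiting the sign-definite companion quadratic form provided by the assumptions. This is exactly the information encoded in Assumptions \ref{A1}--\ref{A2}, so the same case distinctions as in the proof of Theorem~\ref{thm:ODE} must be transported, together with an estimate of the error accumulation, to the PDE setting. The other delicate point is verifying that the error $\mathcal{E}_j$ has sufficient time-integrable decay so that the bootstrap assumption on the profile can be propagated; this is done by a Strichartz/bilinear estimate on the oscillatory components of $F_j$ that are not of the resonant form $|u|^2u$.
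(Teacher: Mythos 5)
Your overall architecture --- the factorization $u_j \approx (it)^{-1/2}e^{i|x|^2/(2t)}\alpha_j(\log t, x/t)$, a profile equation $i\partial_\tau\alpha_j=F_j(\alpha_1,\alpha_2)+\mathcal{E}_j$ with time-integrable error, a bootstrap coupling an $L^\infty_\xi$ bound on the profile with a weighted bound $\|J u_j\|_{L^2}\lesssim\epsilon\, t^{\delta\epsilon^2}$, and a final-state iteration producing the exact ODE solution --- coincides with the paper's. The gap lies precisely in the one step that is new here: how the conserved quantity of Theorem \ref{thm:cons} is upgraded to an almost-conserved quantity controlling $|\alpha_1|^2+|\alpha_2|^2$ uniformly in $\xi$ in the presence of $\mathcal{E}_j$. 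Your computation differentiates $|\mathcal{Q}(\vec{a}_1)|^{\lambda_2}|\mathcal{Q}(\vec{a}_2)|^{-\lambda_1}$ in logarithmic-derivative (Gronwall) form and asserts an error factor $\exp(C\int|\mathcal{E}|\,d\tau)$. That is not correct as stated: the perturbation enters as $R_j/\mathcal{Q}(\vec{a}_j)$ with $R_j\lesssim|\alpha|\,|\mathcal{E}|$, so the Gronwall factor is at best $\exp\bigl(C\int|\mathcal{E}|/|\alpha|\,d\tau\bigr)$, and there is no pointwise-in-$\xi$ lower bound on $|\alpha(\tau,\xi)|$; near zeros of the quadratic forms the quantity degenerates and the argument breaks down. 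You name this as ``the main obstacle'' but do not supply the device that resolves it. The paper's resolution avoids all division: under Assumption \ref{A1} one works with $Q_1^{-m\lambda_2}Q_2^{m\lambda_1}$ for eigenvectors chosen in $W(\lambda_j)\cap\mathcal{P}_+$ with positive first component and $m(\lambda_1-\lambda_2)>1$, so the quantity is comparable to $(|\alpha_1|^2+|\alpha_2|^2)^{m(\lambda_1-\lambda_2)}$, the differentiation identity extends across zeros, and the accumulated error is additive, controlled by $(X_T^2+Y_T^2)^{m(\lambda_1-\lambda_2)+1}$ through the global $L^\infty$ bound on the remainder, not by a Gronwall exponential.

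Under Assumption \ref{A2} the gap is more serious: $W(\lambda_1)$ and $W(\lambda_2)$ do not meet $\mathcal{P}_+$, so the quantity you specify --- which pairs $\lambda_1$ with $\lambda_2$ --- is built from two indefinite forms and cannot yield a bound on $|\alpha_1|^2+|\alpha_2|^2$ merely ``together with the sign-definite quadratic form.'' The paper instead pairs each $\lambda_j$ ($j=1,2$) with $\lambda_3$: setting $\theta_j=(1-\lambda_j/\lambda_3)^{-1}$ and choosing $m>1$ with $m\theta_2>1$, the quantities $|Q_{+,j}|^{m\theta_j}Q_-^{m(1-\theta_j)}$ are almost conserved because $\lambda_j\theta_j+\lambda_3(1-\theta_j)=0$ cancels the resonant term, only the definite form $Q_-$ carries a possibly negative exponent (harmless since $|Q_{+,j}|\lesssim Q_-$), and the needed lower bound $(|\alpha_1|^2+|\alpha_2|^2)^m\lesssim\sum_{j=1,2}|Q_{+,j}|^{m\theta_j}Q_-^{m(1-\theta_j)}$ uses $\vec{p}_1+\vec{p}_2\in\mathcal{P}_+$. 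None of this combinatorics appears in your sketch, and without it the Assumption \ref{A2} case does not close. A minor further point: your closing suggestion that integrability of $\mathcal{E}_j$ requires a Strichartz/bilinear treatment of ``non-resonant'' components of $F_j$ is off target --- for this factorization every cubic term with one conjugation is resonant and is retained in the reduced ODE; the error consists solely of the factorization commutators, estimated by Gagliardo--Nirenberg and the $J$-norm as in your own second display, which is exactly how the paper obtains $\|r_j\|_{L^\infty}\lesssim t^{-5/4+3\delta\epsilon^2}$.
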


Since our assumption is given in terms of eigenvalue and eigenvector of matrix $\mathscr{A}$,
it is easy to check.
Further, it is also easy to construct examples of $\mathscr{A}$ satisfying the assumption.
\subsection{Examples}
Let us check that the systems \eqref{eq:2NLSa} and \eqref{eq:2NLSb} satisfy our assumptions.

The matrix-vector representation of \eqref{eq:2NLSa} is
\[
	\mathscr{A}_1=\begin{bmatrix}
	\zeta+1 & 0 & -2\zeta \\
	0 & 1 & 0 \\
	\zeta & 0 & - 2\zeta+1
	\end{bmatrix}, \quad
	\mathscr{V}_1 = \begin{bmatrix} 0 \\ \frac{-3\zeta+1}2 \\ 0 \end{bmatrix}.
\]
Since $\zeta\neq0$, it follows that
\[
	W(1;\mathscr{A}_1)= \Span \left\{
	\begin{bmatrix} 2 \\ 0 \\ 1 \end{bmatrix},\,
	\begin{bmatrix} 0 \\ 1 \\ 0 \end{bmatrix}\right\}, \quad
	W(1-\zeta;\mathscr{A}_1)= \Span \left\{
	\begin{bmatrix} 1 \\ 0 \\ 1 \end{bmatrix}\right\}.
\]
Therefore, it satisfies Assumption \ref{A1}.
One sees from Theorem \ref{thm:cons} that the reduced ODE system
\[
	\left\{
  \begin{aligned}
     & i\partial_t \varphi_1  =
     (\zeta+1)|\varphi_1|^2 \varphi_2 + \zeta |\varphi_2|^2\varphi_2,  \\
     & i\partial_t \varphi_2 =
  2\zeta |\varphi_1|^2 \varphi_1+ 2\zeta \varphi_1 |\varphi_2|^2 +\ol{\varphi_1}\varphi_2^2 \end{aligned}
  \right.
\]
has a conserved quantity
\[
	(2|\varphi_1|^2+|\varphi_2|^2)^{\zeta-1} (|\varphi_1|^2+|\varphi_2|^2),
\]
which is \emph{not a power of a polynomial} if $\zeta \not\in \mathbb{Q}$ or $\zeta<1$.
Notice that the quantity is equivalent
to $ (|\varphi_1|^2+|\varphi_2|^2)^{\zeta}$.
Hence, under the assumption $\zeta\neq0$,
the conservation of this quantity implies that the solution is global and uniformly bounded.
Assumption \ref{A1} ensures the existence of a conserved quantity which is equivalent to a power of $|\varphi_1|^2 + |\varphi_2|^2$.
Note that there are two more conserved quantities\footnote{It would be more appropriate to say ``one more conserved quantity'' here since the latter quantity is given in terms of the former quantity and the above quantity, and vice versa.}
\[
	|2\Re (\overline{\varphi_1}\varphi_2)|^{\zeta-1} (|\varphi_1|^2+|\varphi_2|^2), \quad
	|2\Re (\overline{\varphi_1}\varphi_2)| (2|\varphi_1|^2+|\varphi_2|^2)^{-1}.
\]
However, they do not yield the global bound of the solution directly since they are not equivalent to a power of $|\varphi_1|^2 + |\varphi_2|^2$.

The matrix-vector representation of \eqref{eq:2NLSb} is
\[
	\mathscr{A}_2=\begin{bmatrix}
	2 & -6 & 0 \\
	0 & -1 & 0 \\
	0 & -4 & 1
	\end{bmatrix}, \quad
	\mathscr{V}_2 = \begin{bmatrix} 0 \\ 0 \\ 0 \end{bmatrix}.
\]
It is easy to see that $\mathscr{A}_2$ has three distinct eigenvalues $2$, $1$, and $-1$ and
\[
	W(2;\mathscr{A}_2)= \Span \left\{
	\begin{bmatrix} 1 \\ 0 \\ 0 \end{bmatrix}\right\}, \quad
	W(1;\mathscr{A}_2)= \Span \left\{
	\begin{bmatrix} 0 \\ 0 \\ 1 \end{bmatrix}\right\}, \quad
	W(-1;\mathscr{A}_2)= \Span \left\{
	\begin{bmatrix} 2 \\ 1 \\ 2 \end{bmatrix}\right\}.
\]
By Theorem \ref{thm:cons}, one sees that
\[
 \left\{
  \begin{aligned}
     & i\partial_t \varphi_1  =
    6|\varphi_1|^2\varphi_1 +3|\varphi_1|^2\varphi_2 +\varphi_1^2\ol{\varphi_2},  \\
     & i\partial_t \varphi_2  =
    -\varphi_1|\varphi_2|^2 - 4 |\varphi_2|^2 \varphi_2
    \end{aligned}
  \right.
 \]
has two conserved quantities
\[
	h_1:=(2|\varphi_1|^2 + 2\Re (\overline{\varphi_1}\varphi_2)+ 2|\varphi_2|^2)^2(|\varphi_1|^2), \quad h_2:=(2|\varphi_1|^2 + 2\Re (\overline{\varphi_1}\varphi_2)+ 2|\varphi_2|^2)
	(|\varphi_2|^2).
\]
Each of them does not give us 
the global bound of a solution.
However, the desired global bound follows from
a combination of these two quantities.
Indeed, we see that
\[
	h_1^2 + h_2^3 \sim (|\varphi_1|^2+|\varphi_2|^2)^6
\]
is also a conserved quantity.
Assumption \ref{A2} implies that the existence of such a pair of conserved quantities.

\subsection{Standard form of systems satisfying Assumptions \ref{A1} or \ref{A2}}
We next give a standard form of systems
satisfying Assumptions \ref{A1} and \ref{A2}.
The standard forms are explicit examples of the systems.
We let
\[
	\tilde{W}(\lambda) = \tilde{W}(\lambda,\mathscr{A})
	= \bigcup_{k\ge1} \ker ((\mathscr{A}- \lambda E_3)^k) \subset \R^3
\]
be the generalized eigenspace.
Here, we state our result as a transformation of the NLS system \eqref{eq:2NLS}.
However, the transformation of \eqref{eq:ODE} is completely the same.

\begin{theorem}\label{thm:standard1}
Suppose that a system \eqref{eq:2NLS} satisfies Assumption \ref{A1}. Let $(u_1,u_2)$ be the unknown for the system.
Then, there exists $\mathscr{M} \in GL_2(\R)$ such that the matrix part $\mathscr{A}$ 
of the system which the new variable
\[
	\begin{bmatrix}
	v_1 \\ v_2
	\end{bmatrix}
	= \mathscr{M}
	\begin{bmatrix}
	u_1 \\ u_2
	\end{bmatrix}
\]
becomes one of $\mathscr{A}_{1,j}$ for $j=1,2,3$ given as follows:
\begin{itemize}
\item $\mathscr{A}_{1,1}$ is given as
\[
\begin{bmatrix}
	\lambda_1 - (1+\eta_1)(\lambda_1-\lambda_2) & 
	\eta_2(1-\lambda_1) +(1+\eta_1)(\eta_2-\eta_3)(\lambda_1-\lambda_2) 
	& (1+\eta_1)(\lambda_1-\lambda_2) \\
	0 & 1& 0\\
	-\eta_1(\lambda_1-\lambda_2) & \eta_3(1- \lambda_1) +\eta_1(\eta_2-\eta_3)(\lambda_1-\lambda_2)& \lambda_1 +\eta_1(\lambda_ 1- \lambda_2)
	\end{bmatrix}
\]
with $\lambda_1,\lambda_2 \not\in \{0,1\}$, $\lambda_1>\lambda_2$,
$\eta_1>0$, and $\eta_2,\eta_3\in \R$.
$\mathscr{A}_{1,1}$ has three distinct eigenvalues 
$\lambda_1,\lambda_2$, and $1$ and it holds that
\begin{align*}
	W(\lambda_1;\mathscr{A}_{1,1}) 
	&= \Span \{{}^t\!(1,0,1) \}, \\
	W(\lambda_2;\mathscr{A}_{1,1}) 
	&= \Span \{{}^t\!(1+\eta_1,0,\eta_1) \},\\
	W(1;\mathscr{A}_{1,1}) 
	&= \Span \{{}^t\!(\eta_2,1,\eta_3) \}.
\end{align*}
\item $\mathscr{A}_{1,2}$ is given as
\[
	\begin{bmatrix}
	1 - (1+\eta_1)(1-\lambda) & 
	 (1+ \eta_1)(\eta_2 - \eta_3)(1-\lambda) + \eta_4
	& (1+\eta_1)(1-\lambda) \\
	0 & 1 & 0\\
	-\eta_1(1-\lambda) & \eta_1(\eta_2- \eta_3)(1-\lambda)+\eta_4 & 1+\eta_1(1-\lambda)
	\end{bmatrix}
\]
with $\lambda \not\in \{0,1\}$, $\eta_1>0$, and $\eta_2,\eta_3,\eta_4\in \R$.
$\mathscr{A}_{1,2}$ has exactly two distinct eigenvalues
 $1$ and $\lambda$ and it holds that
 \begin{align*}
	W(1;\mathscr{A}_{1,2}) 
	&=\begin{cases}
	 \Span \{{}^t\!(1,0,1) \}, & \eta_4 \neq0,\\
	 \Span \{{}^t\!(1,0,1), {}^t\!(\eta_2,1,\eta_3)\}, & \eta_4=0,
	 \end{cases}
	 \\
	\tilde{W}(1;\mathscr{A}_{1,2}) 
	&= \Span \{{}^t\!(1,0,1), {}^t\!(\eta_2,1,\eta_3) \}, \\
	W(\lambda;\mathscr{A}_{1,2}) 
	&= \Span \{{}^t\!(1+\eta_1,0,\eta_1) \}.
\end{align*}
\item $\mathscr{A}_{1,3}$ is given as
\[
\begin{bmatrix}
	\lambda_1 - (1+\eta_1)(\lambda_1+1) & 
	-\eta_2\lambda_1 +(1+\eta_1)(\eta_2-\eta_3)(\lambda_1+1) 
	& (1+\eta_1)(\lambda_1+1) \\
	0 & 0 & 0\\
	-\eta_1(\lambda_1+1) & -\eta_3 \lambda_1 +\eta_1(\eta_2-\eta_3)(\lambda_1+1)& \lambda_1 +\eta_1(\lambda_ 1+1)
	\end{bmatrix}
\]
with $\lambda_1 \in (-1,1] \setminus\{0\}$, $\eta_1>0$, and $\eta_2,\eta_3\in\R$. 
$\mathscr{A}_{1,3}$ has three distinct eigenvalues $\lambda_1$, $0$, and $-1$ and it holds that
\begin{align*}
	W(\lambda_1;\mathscr{A}_{1,3}) 
	&= \Span \{{}^t\!(1,0,1) \}, \\
	W(0;\mathscr{A}_{1,3}) 
	&= \Span \{{}^t\!(\eta_2,1,\eta_3) \}, \\
	W(-1;\mathscr{A}_{1,3}) 
	&= \Span \{{}^t\!(1+\eta_1,0,\eta_1) \}.
\end{align*}
\end{itemize}
\end{theorem}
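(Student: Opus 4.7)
The plan is to exploit the transformation rule for $\mathscr{A}$ induced by a linear change of unknowns $(v_1,v_2)^T=\mathscr{M}(u_1,u_2)^T$, to normalize the two sign-definite eigenvectors of $\mathscr{A}$ by simultaneous diagonalization, and then to split into three subcases according to the spectral structure of $\mathscr{A}$.

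First I would establish the transformation law. A direct computation from \eqref{eq:rhoq} shows that $(\rho_1,\mathcal{R},\rho_2)^T$ is mapped to $T(\mathscr{M})(\rho_1,\mathcal{R},\rho_2)^T$ for a specific $T(\mathscr{M})\in GL_3(\R)$ quadratic in the entries of $\mathscr{M}$, while $\mathcal{I}$ is multiplied by $\det\mathscr{M}$. Substituting these into Proposition~\ref{prop:ode} and comparing yields
\[
\tilde{\mathscr{A}}=(\det\mathscr{M})^{-1}\,T(\mathscr{M})^{-T}\,\mathscr{A}\,T(\mathscr{M})^T,
\]
so eigenvectors of $\mathscr{A}$ are mapped to those of $\tilde{\mathscr{A}}$ via $\vec a\mapsto T(\mathscr{M})^{-T}\vec a$, eigenvalues are rescaled by $(\det\mathscr{M})^{-1}$, and membership in $\mathcal{P}_+$ is preserved. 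In particular, the overall rescaling $\mathscr{M}\mapsto c\mathscr{M}$ with $c\in\R\setminus\{0\}$ rescales all eigenvalues by $c^{-2}$, while post-composing with the coordinate swap $(v_1,v_2)\mapsto(v_2,v_1)$ negates them; together these afford an arbitrary nonzero real rescaling of the spectrum.

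Next, pick $\vec a_j\in W(\lambda_j)\cap\mathcal{P}_+$ for $j=1,2$ and let $M_j$ be the associated real symmetric $2\times 2$ matrix. Since $M_1$ is sign-definite, standard simultaneous diagonalization produces $\mathscr{M}_0\in GL_2(\R)$ sending $M_1$ to $\pm E_2$ and $M_2$ to a diagonal matrix; combining with a coordinate swap, sign flip, and positive rescaling, one arranges $\vec a_1\mapsto(1,0,1)^T$ and $\vec a_2\mapsto(1+\eta_1,0,\eta_1)^T$ with $\eta_1>0$. Any third (generalized) eigenvector of $\tilde{\mathscr{A}}$ must have nonzero middle component (else it would lie in $\Span\{(1,0,1)^T,(1+\eta_1,0,\eta_1)^T\}$), so after rescaling it takes the form $(\eta_2,1,\eta_3)^T$. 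Since $\mathscr{A}$ is real $3\times 3$ with two distinct real eigenvalues $\lambda_1,\lambda_2$, the third eigenvalue $\lambda_3$ is automatically real, giving three subcases. If $\lambda_3\notin\{0,\lambda_1,\lambda_2\}$, a further scaling brings $\lambda_3=1$, and swapping to achieve $\lambda_1>\lambda_2$ produces $\mathscr{A}_{1,1}$. If $\lambda_3\in\{\lambda_1,\lambda_2\}$, one rescales the repeated eigenvalue to $1$, and the third basis vector chosen as above produces $\mathscr{A}_{1,2}$, with $\eta_4=0$ precisely when the repeated eigenvalue is semisimple. If $\lambda_3=0$, then $\lambda_3$ cannot be rescaled; instead one normalizes the second sign-definite eigenvalue to $-1$, and the residual freedom of swapping the two sign-definite eigenvalues acts by $\lambda_1\mapsto 1/\lambda_1$, whose fundamental domain in $\R\setminus\{0,-1\}$ is $(-1,1]\setminus\{0\}$, producing $\mathscr{A}_{1,3}$.

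In each case, the explicit entries of $\mathscr{A}_{1,j}$ are recovered from the spectral decomposition $PDP^{-1}$ (or $PJP^{-1}$ with a Jordan block when the repeated eigenvalue is non-semisimple), where $P$ has the normalized (generalized) eigenvectors as columns, and a direct calculation confirms the stated form. The main obstacle will be the bookkeeping in the non-semisimple subcase of $\mathscr{A}_{1,2}$: the generalized eigenvector must be chosen so that the off-diagonal parameter $\eta_4$ faithfully records the Jordan structure, and one must verify that all residual symmetries (overall scaling, coordinate swap, and addition of a scalar multiple of the eigenvector to the generalized eigenvector) are exactly absorbed into the explicit parameters $(\eta_1,\eta_2,\eta_3,\eta_4)$ and eigenvalues appearing in the three standard forms.
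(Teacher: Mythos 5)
Your proposal is correct and follows essentially the same route as the paper's proof: the transformation law of Proposition \ref{P:change} (which you re-derive from Proposition \ref{prop:ode}), normalization of the two $\mathcal{P}_+$ eigenvectors to ${}^t\!(1,0,1)$ and ${}^t\!(1+\eta_1,0,\eta_1)$ by what the paper implements as a square completion followed by a rotation (i.e.\ exactly your simultaneous diagonalization), eigenvalue rescaling via $\det\mathscr{M}$, the same three-way case split on the third eigenvalue, the same $\mu\mapsto 1/\mu$ fundamental-domain argument when it vanishes, and recovery of the explicit matrices from $PDP^{-1}$ or the Jordan form with $\eta_4$. The only bookkeeping point, which you already flag, is that the "swap'' must be taken with determinant $+1$ (swap composed with a sign flip, as in the paper's $\mathscr{M}^{(4)}$) so that eigenvalue signs and the normalized eigenvector forms are not disturbed, and a sign flip is needed before scaling when the eigenvalue to be normalized has the wrong sign.
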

Let us next consider the standard form of systems satisfying Assumption \ref{A2}.
\begin{theorem}\label{thm:standard2}
Suppose that a system \eqref{eq:2NLS} satisfies Assumption \ref{A2}. Let $(u_1,u_2)$ be the unknown for the system.
Then, there exists $\mathscr{M} \in GL_2(\R)$ such that the matrix part $\mathscr{A}$ 
of the system which the new variable
\[
	\begin{bmatrix}
	v_1 \\ v_2
	\end{bmatrix}
	= \mathscr{M}
	\begin{bmatrix}
	u_1 \\ u_2
	\end{bmatrix}
\]
solves becomes one of $\mathscr{A}_{2,j}$ for $j=1,2$ given as follows:
\begin{itemize}
\item $\mathscr{A}_{2,1}$ is given as
\[
	\begin{bmatrix}
	\lambda_1 & 
	-\eta(\lambda_1+1) 
	& 0 \\
	0 & -1& 0\\
	0 & -\eta(\lambda_2+1)  & \lambda_2
	\end{bmatrix}
\]
with $\lambda_1 > \lambda_2 >-1$ and $\eta\in \R$ with $|\eta|>1$.
$\mathscr{A}_{2,1}$ has three distinct eigenvalues 
$\lambda_1$, $\lambda_2$, and $-1$ and it holds that
\begin{align*}
	W(\lambda_1;\mathscr{A}_{2,1}) 
	&= \Span \{{}^t\!(1,0,0) \}, \\
	W(\lambda_2;\mathscr{A}_{2,1}) 
	&= \Span \{{}^t\!(0,0,1) \},\\
	W(-1;\mathscr{A}_{2,1}) 
	&= \Span \{{}^t\!(\eta,1,\eta) \}.
\end{align*}
\item $\mathscr{A}_{2,2}$ is given as
\[
	\begin{bmatrix}
	\lambda_1-(1+\eta_1)(\lambda_1-\lambda_2) & 
	(1+\eta_1)(\eta_2+\eta_3)(\lambda_1-\lambda_2) -(\lambda_1+1)\eta_2
	& -(1+\eta_1)(\lambda_1-\lambda_2) \\
	0 & -1& 0\\
	\eta_1(\lambda_1-\lambda_2) & -\eta_1(\eta_2+\eta_3)(\lambda_1-\lambda_2) -(\lambda_1+1)\eta_3  & \eta_1(\lambda_1-\lambda_2)
	+\lambda_1
	\end{bmatrix}
\]
with $\lambda_1,\lambda_2 \in (-1,\infty)$, $\lambda_1\neq\lambda_2$, $\eta_1\ge0$, $\eta_2,\eta_3\in \R$, and $\eta_2\eta_3>1$.
Further, if $\eta_1\neq0$ then $\lambda_1>\lambda_2$.
$\mathscr{A}_{2,2}$ has three distinct eigenvalues $\lambda_1$, $\lambda_2$, and $-1$ and it holds that
\begin{align*}
	W(\lambda_1;\mathscr{A}_{1,3}) 
	&= \Span \{{}^t\!(1,0,-1) \}, \\
	W(\lambda_2;\mathscr{A}_{1,3}) 
	&= \Span \{{}^t\!(1+\eta_1,0,-\eta_1) \}, \\
	W(-1;\mathscr{A}_{1,3}) 
	&= \Span \{{}^t\!(\eta_2,1,\eta_3) \}.
\end{align*}
\end{itemize}
\end{theorem}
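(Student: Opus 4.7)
The plan is to leverage the fact that a linear change of variable $\vec{v}=\mathscr{M}\vec{u}$ with $\mathscr{M}\in GL_2(\R)$ induces an eigenvalue rescaling together with a congruence action on the symmetric matrices associated with the eigenvectors of $\mathscr{A}$, and to exploit this freedom to reduce the three eigenvectors simultaneously to the prescribed standard forms. Concretely, identifying $\vec{a}={}^t\!(a_1,a_2,a_3)$ with the symmetric matrix $A=\bigl[\begin{smallmatrix}a_1 & a_2\\a_2 & a_3\end{smallmatrix}\bigr]$, one has $\mathcal{Q}(\vec{a})=\tr(A\,S(u))$ where $S(u)=\bigl[\begin{smallmatrix}|u_1|^2 & \Re(\ol{u_1}u_2)\\ \Re(\ol{u_1}u_2) & |u_2|^2\end{smallmatrix}\bigr]$, and short computations give $S(v)=\mathscr{M}\,S(u)\,\mathscr{M}^{T}$ and $\Im(\ol{v_1}v_2)=(\det\mathscr{M})\Im(\ol{u_1}u_2)$. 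Inserted into Proposition~\ref{prop:ode}, these identities show that the matrix $\mathscr{A}'$ for the $\vec{v}$-system has eigenvalues $\lambda/\det\mathscr{M}$ with associated symmetric matrices $\mathscr{M}^{-1}A\mathscr{M}^{-T}$. Choosing $|\det\mathscr{M}|=|\lambda_3|$ with the appropriate sign, I may henceforth assume $\lambda_3=-1$, in which case $\lambda_j/\lambda_3<1$ becomes $\lambda_1,\lambda_2>-1$.

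The central step is to choose $\mathscr{M}$ that simultaneously brings the symmetric matrices $A_1,A_2$ associated with $W(\lambda_1),W(\lambda_2)$ to diagonal form by congruence. This is exactly where Assumption~\ref{A2} is needed: $(W(\lambda_1)\oplus W(\lambda_2))\cap\mathcal{P}_+\neq\emptyset$ yields real numbers $\alpha,\beta$ (both nonzero, since neither $A_1$ nor $A_2$ is individually sign-definite) with $\alpha A_1+\beta A_2$ positive definite after a possible global sign flip. Picking $\mathscr{N}$ with $\mathscr{N}^T(\alpha A_1+\beta A_2)\mathscr{N}=I_2$ turns $\alpha\mathscr{N}^TA_1\mathscr{N}+\beta\mathscr{N}^TA_2\mathscr{N}=I_2$, so each of the two transformed matrices is a polynomial in the other and they share a common orthogonal diagonalizer $O$; setting $\mathscr{M}=\mathscr{N}O$ accomplishes the simultaneous diagonalization. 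Because $W(\lambda_j)\cap\mathcal{P}_+=\emptyset$, each diagonalized $A_j=\mathrm{diag}(p_j,q_j)$ satisfies $p_jq_j\le 0$, and linear independence of $A_1,A_2$ is automatic from the distinctness of $\lambda_1,\lambda_2$.

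It remains to split cases and to exhaust the discrete residual freedom (sign flips of eigenvectors, coordinate swap by an anti-diagonal $\mathscr{M}$, diagonal rescaling $\mathrm{diag}(s,t)$) so as to match the templates $\mathscr{A}_{2,1}$ and $\mathscr{A}_{2,2}$. If both $A_1$ and $A_2$ have rank $1$, independence forces their nonzero diagonal entries to occupy different slots; after a possible swap and sign flips one reaches $\vec{a}_1={}^t\!(1,0,0)$ and $\vec{a}_2={}^t\!(0,0,1)$, and the remaining diagonal scaling is exactly what allows $\vec{a}_3$ (whose middle entry is nonzero, as $\vec{a}_3\notin W(\lambda_1)\oplus W(\lambda_2)$) to be normalized to ${}^t\!(\eta,1,\eta)$ with $|\eta|>1$ (the bound coming from $\det A_3>0$), producing $\mathscr{A}_{2,1}$. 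Otherwise at least one of $A_1,A_2$ is indefinite; after relabeling $\lambda_1,\lambda_2$ so that $A_1$ is indefinite and using scaling, I place $\vec{a}_1={}^t\!(1,0,-1)$. A brief analysis of the ratio $p_2/q_2$, possibly combined with the coordinate-swap-plus-sign-flip trick (which preserves $\vec{a}_1$), then normalizes $\vec{a}_2$ to ${}^t\!(1+\eta_1,0,-\eta_1)$ with $\eta_1\ge 0$, and $\vec{a}_3$ rescales to ${}^t\!(\eta_2,1,\eta_3)$ with $\eta_2\eta_3>1$, producing $\mathscr{A}_{2,2}$; the clause $\lambda_1>\lambda_2$ when $\eta_1>0$ is realized by choosing the labeling of the two indefinite eigenvectors. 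With the eigenvalues and the three normalized eigenvectors in place, the explicit entries of $\mathscr{A}_{2,j}$ follow by computing $\mathscr{A}=P\,\mathrm{diag}(\lambda_1,\lambda_2,-1)\,P^{-1}$ for $P$ the eigenvector matrix, a routine verification.

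The main obstacle is the simultaneous congruence-diagonalization step: two real symmetric $2\times 2$ matrices are not in general simultaneously diagonalizable by congruence, and the sign-definite combination furnished by Assumption~\ref{A2} is precisely the hypothesis that makes the standard orthogonal-diagonalization shortcut available. A secondary, purely combinatorial difficulty lies in the case split: one must check that the handful of discrete symmetries available (sign flips of $\vec{a}_1,\vec{a}_2$, one coordinate swap, and relabeling $\lambda_1\leftrightarrow\lambda_2$) suffice to bring every sub-configuration of ranks and signs of $(A_1,A_2)$ into one of the two templates, and that the parameter ranges $\eta_1\ge 0$, $\eta_2\eta_3>1$, $|\eta|>1$ together with the conditional order $\lambda_1>\lambda_2$ emerge exactly as claimed rather than with some residual ambiguity.
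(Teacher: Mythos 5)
Your proposal is correct and lands on the same normal forms, but it organizes the core reduction differently from the paper. The paper fixes the sign of $\lambda_3$ first and then splits into four subcases according to whether each of $W(\lambda_1)$, $W(\lambda_2)$ lies in $\mathcal{P}_0$ or meets it only at $\{0\}$, writing down explicit transformations in each: a trigonometric parametrization ${}^t(1+\sin\theta,\cos\theta,1-\sin\theta)$ of the rank-one eigenforms to send them to ${}^t(1,0,0)$ and ${}^t(0,0,1)$ in the doubly degenerate case, a triangular matrix in the mixed cases (which indeed produce the $\eta_1=0$ instance of $\mathscr{A}_{2,2}$), and a hyperbolic rotation with $\tau=\frac12\tanh^{-1}(2q_2/(q_1+q_3))$ in the doubly nondegenerate case, where the inequality $2|q_2|<|q_1+q_3|$ that makes $\tau$ well defined is extracted from the span hypothesis of Assumption \ref{A2}. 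You instead invoke the classical criterion for simultaneous diagonalization by congruence of two real symmetric $2\times2$ matrices, namely the existence of a definite linear combination, which is exactly what $(W(\lambda_1)\oplus W(\lambda_2))\cap\mathcal{P}_+\neq\emptyset$ supplies (and your observation that $\alpha,\beta\neq0$ because neither eigenform is definite is the right one); the paper's four subcases then collapse into your two rank/sign patterns, with both-rank-one giving $\mathscr{A}_{2,1}$ and the remaining patterns giving $\mathscr{A}_{2,2}$, the mixed ones as $\eta_1=0$, and the constraints $|\eta|>1$, $\eta_2\eta_3>1$, $\eta_1\ge0$ and the clause $\lambda_1>\lambda_2$ for $\eta_1\neq0$ emerging as you describe. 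Your route buys a uniform, conceptually transparent use of the span condition; the paper's buys completely explicit change-of-variable matrices. Two harmless bookkeeping slips to repair in a full write-up: the eigenform transforms as $A\mapsto\mathscr{M}^{-T}A\mathscr{M}^{-1}$ (you wrote $\mathscr{M}^{-1}A\mathscr{M}^{-T}$; since you only need existence of some $\mathscr{M}\in GL_2(\R)$ this is a relabeling), and because every later non-unimodular transformation rescales all eigenvalues by $(\det\mathscr{M})^{-1}$, the normalization $\lambda_3=-1$ should be imposed by a final scaling (or the intermediate maps kept of determinant $\pm1$ with signs tracked), which is exactly how the paper schedules its scalings.
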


\smallskip

The rest of the paper is organized as follows. 
In Sections 2 and 3, we prove Theorem \ref{thm:main}. 
In particular, in Section 2, we show the global existence of 
solution to \eqref{eq:2NLS} and in Section 3, we derive the 
asymptotic formula (\ref{eq:asymptest}). Finally in Section 4, 
we prove Theorems \ref{thm:standard1} and \ref{thm:standard2}.

\section{Global existence of the solution}

In this section we prove the global existence of 
solution to \eqref{eq:2NLS}. 
We will denote by $U(t)$ the free Schr\"odinger group, 
namely,
\begin{equation}
  (U(t)f)(x) = (e^{\frac{it}{2}\partial_x^2}f)(x)
  =\left(\frac{1}{2\pi it}\right)^{\frac12}
  \int_{\R}e^{\frac{i|x-y|^2}{2t}}f(y)dy.
\end{equation}
The following factorization formula is well-known:
\begin{equation}
  U(t) = M(t)D(t)\mathcal{F}M(t),
\end{equation}
where 
\begin{align*}
(M(t)f)(x) &= e^{\frac{i|x|^2}{2t}}f(x),\qquad(\text{Multiplication})\\
(D(t)f)(x) &= (it)^{-\frac12}f\left(\frac{x}{t}\right),
\qquad(\text{Dilation})
\end{align*}
and $\mathcal{F}$ is the Fourier transform on $\R$.
The generator of  the Galilean transformation is given as 
$J(t)=x+it\partial_x$. 
Note that the operators $J(t)$ and 
$i\partial_t +(1/2)\partial^2_x$ commute. 
The following identities are useful:
\[
	J(t) = U(t) x U(-t) = M(t) it \partial_x M(t)^{-1},
\]
where the second identity is valid for $t\neq0$.

By the standard theory of local well-posedness for 
the nonlinear Schr\"{o}dinger equation 
(see \cite{Caz} for instance), 
we have the following.

\begin{proposition}\label{prop:local}
  There exists $\widetilde{\epsilon}_0 > 0$ such that if
  $\epsilon := \|u_{1,0}\|_{H^{0,1}} + \|u_{2,0}\|_{H^{0,1}}$ satisfies
  $\epsilon \le \widetilde{\epsilon}_0$, then there exists
  a unique solution
  $(u_1, u_2) \in (C([-1, 1];L^2(\R)))^2$ to \eqref{eq:2NLS} such that
  \[
 ( U(-t)u_1, U(-t)u_2) \in (C([-1,1];H^{0,1}(\R)))^2
  \]
  and
  \begin{equation}
    \max_{t \in [-1,1]}\left(
    \|u_1(t)\|_{L^2} + \|J(t)u_1(t)\|_{L^2}
    + \|u_2(t)\|_{L^2} + \|J(t)u_2(t)\|_{L^2}
    \right) \le 2\epsilon.
  \end{equation}
\end{proposition}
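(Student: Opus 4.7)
The plan is to prove Proposition~\ref{prop:local} by a contraction-mapping argument applied to the Duhamel formulation
\[
	u_j(t) = U(t) u_{j,0} - i \int_0^t U(t-s) F_j(u_1,u_2)(s)\, ds, \qquad j=1,2,
\]
in a complete metric space that simultaneously controls $u_j$ in $L^\infty_t L^2_x$, in the Strichartz norm $L^4_t L^\infty_x$, and $J(t) u_j$ in $L^\infty_t L^2_x$. Two standard ingredients drive the argument: the intertwining relation $J(t)U(t-s) = U(t-s) J(s)$, which lets $J(t)$ be commuted inside the Duhamel integral, and the one-dimensional endpoint Strichartz estimate $\|U(\cdot)f\|_{L^4_t L^\infty_x} \lesssim \|f\|_{L^2}$ together with its inhomogeneous counterpart paired with the trivial dual pair $(1,2)$ on the source.

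Concretely, for $T\in(0,1]$ I would define
\[
	X_T = \bigl\{(u_1,u_2) : u_j \in C([-T,T];L^2)\cap L^4([-T,T];L^\infty),\; J(t)u_j \in C([-T,T];L^2)\bigr\}
\]
with norm $\|u\|_{X_T} := \sum_{j=1,2}\bigl(\sup_{|t|\le T}(\|u_j(t)\|_{L^2}+\|J(t)u_j(t)\|_{L^2})+\|u_j\|_{L^4_T L^\infty_x}\bigr)$, and work on the closed ball $\{u : \|u\|_{X_T}\le 2C\epsilon\}$ equipped with the weaker metric induced by $\sup_{|t|\le T}\|u_j-v_j\|_{L^2}$. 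The key observation is that every term of $F_j$ is a cubic monomial of the form $u_a u_b \overline{u_c}$ with indices in $\{1,2\}$ (exactly two unconjugated factors and one conjugated factor), for which a direct computation yields the Leibniz-type identity
\[
	J(t)(u_a u_b \overline{u_c}) = (J(t)u_a)u_b\overline{u_c} + u_a(J(t)u_b)\overline{u_c} - u_a u_b \overline{J(t)u_c}.
\]
This produces the crucial bound
\[
	\|J(t) F_j(u_1,u_2)\|_{L^2} \lesssim \Bigl(\sum_{k=1,2}\|u_k\|_{L^\infty}\Bigr)^2 \sum_{k=1,2}\|J(t)u_k\|_{L^2},
\]
together with $\|F_j\|_{L^2}\lesssim (\sum_k\|u_k\|_{L^\infty})^2\sum_k\|u_k\|_{L^2}$. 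Applying Hölder in time (which produces a factor $T^{1/2}$ when passing from $L^4_TL^\infty$ twice to $L^2_TL^\infty$ twice) and combining with the linear Strichartz and energy bounds then gives an estimate of the schematic form
\[
	\|\Phi(u)\|_{X_T} \le C\epsilon + CT^{1/2}\|u\|_{X_T}^3,
\]
and an analogous Lipschitz estimate for differences.

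Choosing $T=1$ and $\widetilde{\epsilon}_0$ so small that $C\epsilon + C(2C\epsilon)^3 \le 2C\epsilon$ and that the contraction constant is strictly less than one closes both the self-map and contraction properties, producing the unique fixed point and the stated bound. Uniqueness in the class $C([-1,1];L^2)$, the persistence of the regularity $U(-t)u_j\in C([-1,1];H^{0,1})$, and continuous dependence then follow from the usual arguments. The only mildly delicate point is verifying the Leibniz identity above: the fact that every cubic term in the system \eqref{eq:2NLS} has gauge degree one ensures that only $J(t)$ (applied to unconjugated factors) and $\overline{J(t)u}$ (applied to conjugated factors) appear on the right-hand side---never $J(-t)u$---which is exactly what allows the iteration to close on the symmetric interval $[-T,T]$.
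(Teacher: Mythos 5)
Your contraction argument (Duhamel formulation, the $1$D Strichartz pair $L^4_tL^\infty_x$, and the Leibniz-type action of $J(t)$ on the gauge-degree-one cubic monomials $u_au_b\overline{u_c}$) is correct, and it is precisely the standard local theory that the paper invokes without proof through the citation to Cazenave, so you are taking essentially the same route. The one bookkeeping point to watch is the stated bound $2\epsilon$ rather than $2C\epsilon$: since $U(t)$ is unitary and $J(t)U(t)u_{j,0}=U(t)(xu_{j,0})$, the linear contributions to the $L^2$- and $J$-components are exactly $\epsilon$, so you should track these components separately from the auxiliary $L^4_tL^\infty_x$ norm (which is the only place the Strichartz constant enters) to get $\epsilon+C\epsilon^3\le 2\epsilon$ for $\epsilon$ small.
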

We shall show the existence of global solution by 
proving that $\|U(-t)u_1\|_{H^{0,1}}+\|U(-t)u_2\|_{H^{0,1}}$ is finite for all $t>0$.
To this end,
 we introduce a new variable
  $w_j(t,\xi) := \mathcal{F}[U(-t)u_j(t,\cdot)](\xi)$.
  Note that $U(-t)u_j(t) \in H^{0,1}$ is equivalent to $w_j(t) \in H^1$.
  Applying $\mathcal{F}U(-t)$ to the both sides of \eqref{eq:2NLS}, we have
  \begin{equation}\label{eq:wsys}
    i\partial_t w_j(t) = \frac{1}{t}F_j(w_1, w_2) + r_j
  \end{equation}
  for $j = 1,2$ and $t \neq 0$, where
  \begin{equation}\label{eq:rj}
    r_j := \textrm{I}_j + \textrm{II}_j
  \end{equation}
  with
  \begin{equation}\label{eq:Ijdef}
    \textrm{I}_j := \left(U\left(\tfrac{1}{t}\right)-1\right)D(t)^{-1}M(t)^{-1}F_j(u_1(t), u_2(t))
  \end{equation}
  and
  \begin{equation}\label{eq:IIjdef}
    \textrm{II}_j :=
    \frac{1}{t}\left\{(F_j\left(U\left(-\tfrac{1}{t}\right)w_1(t),U\left(-\tfrac{1}{t}\right)w_2(t)\right)
    - F_j(w_1(t), w_2(t)))\right\}.
\end{equation}
From the unitarity of the Fourier transform 
and $U(t)$ in $L^2(\R)$ and $J(t)$, we have
  \begin{equation}\label{eq:L2}
    \|w_j(t)\|_{L^2} = \|u_j(t)\|_{L^2}, \quad 
    \|\partial_{\xi} w_j(t)\|_{L^2} = \|J(t)u_j(t)\|_{L^2}.
  \end{equation}
  Since $u_j(t) = M(t)D(t)U(-\frac{1}{t})w_j(t)$, it holds that 
  for any $t\neq0$, 
  \begin{equation}\label{eq:Linfty}
    \|U\left(-\tfrac{1}{t}\right)w_j(t)\|_{L^\infty} 
    = t^{\frac{1}{2}}\|u_j(t)\|_{L^2}.
  \end{equation}


For $\delta > 0$ and $T > 1$, we define
\begin{equation}
  X_T := \sup_{t \in [1,T]}t^{-\delta\epsilon^2}
  \sum_{j = 1}^2(\|u_j(t)\|_{L^2} + \|J(t)u_j(t)\|_{L^2})
\end{equation}
and
\begin{equation}
  Y_T := \sup_{t \in [1,T]}t^{\frac12}\sum_{j=1}^2\|u_j(t)\|_{L^\infty}.
\end{equation}
Note that by the Gagliardo-Nirenberg inequality and (\ref{eq:L2}), 
we have for any $t\in [1,T]$, 
\begin{equation}\label{eq:wjinfty}
\|w_j(t)\|_{L^\infty} 
\lesssim
\|w_j(t)\|_{L^2}^{\frac12}
\|\pt_{\xi}w_j(t)\|_{L^2}^{\frac12}
\lesssim 
t^{\delta\varepsilon^2}X_T.
\end{equation}
The following gives us the global existence of a solution to \eqref{eq:2NLS}.

\begin{proposition}\label{prop:global}
Let $\widetilde{\epsilon_0}$ be given by Proposition 
\ref{prop:local}. 
Then there exist $\delta > 0$, $C > 0$, and $\epsilon_0 \in (0,\widetilde{\epsilon_0}]$ 
such that if $\epsilon= \|u_{1,0}\|_{H^{0,1}} + \|u_{2,0}\|_{H^{0,1}}$ 
satisfies $\epsilon\le\epsilon_0$, then the unique solution to \eqref{eq:2NLS} 
given in Proposition \ref{prop:local} exists globally 
in time for positive time direction and obeys the bound
  \begin{equation}
    \sup_{T \ge 1}\left(X_T + Y_T\right) \le C\epsilon.
  \end{equation}
\end{proposition}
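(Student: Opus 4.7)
The plan is to close a bootstrap on $X_T + Y_T$. Assume $X_T + Y_T \le C_1 \epsilon$ on $[1,T]$ for a suitable fixed $C_1>0$; we will improve this to $X_T + Y_T \le \frac{C_1}{2}\epsilon$ for sufficiently small $\epsilon$ and appropriately chosen $\delta > 0$, which combined with Proposition \ref{prop:local} and continuity in $T$ yields global existence and the stated bound. As a preliminary step we control the remainders $r_j = \textrm{I}_j + \textrm{II}_j$ of \eqref{eq:rj}. Using $U(1/t)-1 = \int_0^{1/t}\frac{i}{2}U(s)\partial_x^2 \, ds$, the commutation identities for $J(t)$ and $M(t)$, and the estimate $\|u_j\|_{L^\infty}\lesssim t^{-1/2}Y_T$, one obtains
\begin{equation*}
  \|r_j(t)\|_{L^2} \lesssim t^{-\frac{3}{2}+C\delta\epsilon^2}\epsilon^3, \qquad \|\partial_\xi r_j(t)\|_{L^2} \lesssim t^{-1+C\delta\epsilon^2}\epsilon^3.
\end{equation*}
The first is integrable in time; the second will be absorbed by Gronwall.

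The heart of the argument is a pointwise-in-$\xi$ $L^\infty$ bound on $w_j$. For each fixed $\xi$, the time rescaling $\tau=\log t$ turns \eqref{eq:wsys} into the reduced ODE system \eqref{eq:ODE} for $(w_1(t,\xi),w_2(t,\xi))$ perturbed only by $t\,r_j(t,\xi)$. Theorem \ref{thm:ODE} provides a function $\Phi(w_1,w_2)$, constructed from the elementary invariants $|\mathcal{Q}(\vec{a}_1)|^{\lambda_2}|\mathcal{Q}(\vec{a}_2)|^{-\lambda_1}$ of Theorem \ref{thm:cons} (a fractional power under Assumption \ref{A1}, or a polynomial combination such as $h_1^2+h_2^3$ in the example \eqref{eq:2NLSb} under Assumption \ref{A2}), which is equivalent to some positive power $(|w_1|^2+|w_2|^2)^k$ and is conserved along the reduced ODE flow. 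Its time derivative along \eqref{eq:wsys} is therefore bounded by $|r(t,\xi)|(|w_1|^2+|w_2|^2)^{k-1/2}$; integrating and using the remainder estimate yields, uniformly in $\xi$,
\begin{equation*}
  |w_1(t,\xi)|^2 + |w_2(t,\xi)|^2 \lesssim |w_1(1,\xi)|^2 + |w_2(1,\xi)|^2 + C\epsilon^3.
\end{equation*}
Taking $\sup_\xi$, using $H^1\hookrightarrow L^\infty$ and \eqref{eq:Linfty} then produces $Y_T\lesssim\epsilon$.

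For the $X_T$ bound, the standard energy estimate applied to \eqref{eq:wsys} and its $\xi$-derivative reads
\begin{equation*}
  \frac{d}{dt}\bigl(\|w_j\|_{L^2}^2 + \|\partial_\xi w_j\|_{L^2}^2\bigr) \lesssim \frac{\|w(t)\|_{L^\infty}^2}{t} \bigl(\|w_j\|_{L^2}^2 + \|\partial_\xi w_j\|_{L^2}^2\bigr) + \|r_j(t)\|_{H^1}\bigl(\|w_j\|_{L^2}+\|\partial_\xi w_j\|_{L^2}\bigr).
\end{equation*}
Combining this with $\|w(t)\|_{L^\infty}^2\lesssim\epsilon^2$ from the previous step and applying Gronwall gives $\|w_j(t)\|_{L^2}+\|\partial_\xi w_j(t)\|_{L^2}\lesssim\epsilon\,t^{C\epsilon^2}$. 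Choosing $\delta>C$ yields $X_T\lesssim\epsilon$ and closes the bootstrap.

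The main obstacle is making the pointwise near-conservation of $\Phi$ rigorous, since $\Phi$ can be genuinely fractional and individual factors $\mathcal{Q}(\vec{a}_j)$ may vanish at some $\xi$. The key observation is that Theorem \ref{thm:ODE} already guarantees $\Phi$ is equivalent to a \emph{positive} power of $|w_1|^2+|w_2|^2$, so the apparent singularities in its defining factors cancel in the time derivative; organizing this cancellation uniformly in $\xi$ before the perturbative integration is the delicate step.
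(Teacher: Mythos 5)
Your overall strategy is the same as the paper's: a bootstrap on $X_T,Y_T$, decay estimates for the remainders $r_j$, and a pointwise-in-$\xi$ bound on $|w_1|^2+|w_2|^2$ obtained from quasi-conserved, generally non-polynomial quantities built from eigenvectors of $\mathscr{A}$, which is then converted into a $Y_T$ bound via \eqref{yt}. Two of your deviations are harmless in principle: you run the $X_T$ estimate as an energy/Gronwall argument on \eqref{eq:wsys} and its $\xi$-derivative, which forces you to also estimate $\|\partial_\xi r_j\|_{L^2}$ (the paper avoids this entirely by applying Duhamel to the original system and using the operator $J(t)$, so only $\|r_j\|_{L^\infty}$ is ever needed); and you use one bootstrap constant instead of the paper's ordered choice of $C_0,\tilde C_0,\delta,\epsilon_0$, which still closes with the same bookkeeping.

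The genuine gap is at the central step. You invoke Theorem \ref{thm:ODE} as providing a function $\Phi$ equivalent to a positive power of $|w_1|^2+|w_2|^2$, conserved along \eqref{eq:ODE}, and satisfying $|\partial_t\Phi|\lesssim |r|\,(|w_1|^2+|w_2|^2)^{k-1/2}$ along \eqref{eq:wsys}. Theorem \ref{thm:ODE} asserts none of this — it is a qualitative global-boundedness statement, and in the paper it is itself a \emph{corollary} of the estimates you are trying to prove, so the appeal is circular. Moreover, equivalence to a power plus conservation does not by itself give the gradient-type bound you need; it must be checked on the explicit quantity. Under Assumption \ref{A1} this is nearly routine: with $\vec p_j\in W(\lambda_j)\cap\mathcal P_+$ one takes $Q_1^{-m\lambda_2}Q_2^{m\lambda_1}$ with $m(\lambda_1-\lambda_2)>1$ (the exponent condition is exactly what makes the differentiation legitimate at zeros of $w$ and keeps the power of $|w|^2$ positive). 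Under Assumption \ref{A2}, however, no single eigenvector gives a coercive $\mathcal Q$, and the construction is the real content: one needs $\theta_j=(1-\lambda_j/\lambda_3)^{-1}$ so that $\lambda_j\theta_j+\lambda_3(1-\theta_j)=0$, the quantities $|Q_{+,j}|^{m\theta_j}Q_-^{m(1-\theta_j)}$ with $m\theta_j>1$, $m>1$, the comparison $\max(|Q_{+,1}|,|Q_{+,2}|)\le D_+D_-Q_-$ coming from $\vec p_1+\vec p_2\in\mathcal P_+$ and $\vec p_3\in\mathcal P_+$, and the lower bound of the \emph{sum} of the two quantities by $(|w_1|^2+|w_2|^2)^m$; only after all this does the perturbative integration against $\|r_j\|_{L^\infty}\lesssim t^{-5/4+3\delta\epsilon^2}$ close. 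You explicitly defer exactly this ("organizing this cancellation uniformly in $\xi$ ... is the delicate step"), so the Assumption \ref{A2} case — and with it the proposition in full generality — is not actually proved by your argument as written.
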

\begin{proof}
Since $X_T$ and $Y_T$ are continuous in $T$ on $[1,\infty)$,
 it suffices to show that there exist $\delta > 0$, $C_0 > 0$, $\widetilde{C}_0 > 0$, and $\epsilon_0 > 0$ such that if
  $\epsilon \in (0,\epsilon_0]$ and 
  \begin{equation}\label{eq:boot1}
    X_T \le C_0\epsilon, \quad Y_T \le \widetilde{C}_0\epsilon
  \end{equation}
  for some $T \ge 1$ then it holds that
  \begin{equation}\label{eq:boot2}
    X_T \le \tfrac{1}{2}C_0\epsilon, \quad Y_T \le \tfrac{1}{2}\widetilde{C}_0\epsilon.
\end{equation}
We divide the proof into four steps.

{\bf Step 1}. We estimate $X_T$.
Let $\delta > 0$, $C_0 > 0$, $\tilde{C}_0$, and $\epsilon_0 > 0$ to be chosen later. Suppose that \eqref{eq:boot1} is true for some $T \ge 1$.
  By the Duhamel formula for \eqref{eq:2NLS}, we see that
  \begin{align*}
    \sum_{j=1}^2\|u_j(t)\|_{L^2}
     & \le C_1\epsilon + 
     C_1\int_1^t\left(\sum_{j=1}^2\|u_j(\tau)
     \|_{L^\infty}\right)^2\sum_{j=1}^2\|u_j(\tau)\|_{L^2}d\tau \\
     & \le C_1\epsilon + C_1X_TY_T^2\int_1^t \tau^{\delta\epsilon^2-1}
     d\tau                                            \\
     & \le C_1\epsilon + \delta^{-1}\widetilde{C}_0^2C_0C_1\epsilon t^{\delta\epsilon^2}
  \end{align*}
  for $t \ge 1$.
  In the same manner, we obtain the estimate for $J(t)u_j(t)$:
  \begin{equation}
    \sum_{j=1}^2\|J(t)u_j(t)\|_{L^2} \le C_1\epsilon 
    +\delta^{-1}\widetilde{C}_0^2C_0C_1 \epsilon t^{\delta\epsilon^2}
  \end{equation}
  for $t \ge 1$.
  Hence, it holds that
  \begin{equation}
    X_T \le C_1(1+\delta^{-1}C_0\widetilde{C}_0^2)\epsilon.
    \label{eq:XT1}
  \end{equation}
  
  {\bf Step 2}. We estimate $Y_T$. 
  By using the Gagliardo-Nirenberg inequality and \eqref{eq:L2}, we have
  \begin{align}\label{eq:w1}
    \|(U(-\tfrac{1}{t})-1)w_j(t)\|_{L^\infty} & \lesssim \|(U(-\tfrac{1}{t})-1)w_j(t)\|_{L^2}^\frac12\|\partial_x(U(-\tfrac{1}{t})-1)w_j(t)\|_{L^2}^\frac12 \\ \nonumber           & \lesssim t^{-\frac14}\|\partial_x w_j(t)\|_{L^2} \\ \nonumber
& = t^{-\frac14} \|J(t)u_j(t)\|_{L^2}.
  \end{align}
  Hence, for $t \in [1,T]$, it holds that
  \begin{align*}
    \|u_j(t)\|_{L^\infty}
     & = \|U(t)U(-t)u_j(t)\|_{L^\infty}                              \\
     & \le \|M(t)D(t)w_j(t)\|_{L^\infty}
    + \|M(t)D(t)\mathcal{F}(M(t)-1)\mathcal{F}^{-1}w_j(t)\|_{L^\infty} 
    \nonumber\\
     & \le Ct^{-\frac{1}{2}}\|w_j(t)\|_{L^\infty}
    + Ct^{-\frac12}\|(U(-\tfrac{1}{t})-1)w_j(t)\|_{L^\infty}  \nonumber \\
     & \le Ct^{-\frac{1}{2}}\|w_j(t)\|_{L^\infty}
    + Ct^{-\frac34}\|J(t)u_j(t)\|_{L^2}.\nonumber
  \end{align*}
This yields 
  \begin{equation}
    Y_T \le C\sum_{j=1}^2\|w_j(t)\|_{L^\infty} + CX_T.\label{yt}  
  \end{equation}

In view of \eqref{yt}, to estimate $Y_T$, 
it suffices to estimate $w_j$ in $L^\infty$.
We shall do so in the next two steps by using the integral form of \eqref{eq:wsys}. 
 In this step, we estimate the remainder term $r_j$ given in 
 \eqref{eq:rj}-\eqref{eq:IIjdef}.
  For the estimate of $\textrm{I}_j$ given in 
  \eqref{eq:Ijdef}, it follows from the Gagliardo-Nirenberg inequality that
  \begin{align*}
    \|\textrm{I}_j\|_{L^\infty} & \lesssim
    \|\textrm{I}_j\|_{L^2}^\frac12 \|\partial_x\textrm{I}_j\|_{L^2}^\frac12                                        \\
                                & \lesssim t^{-\frac14}\|\partial_xD(t)^{-1}M(t)^{-1}F_j(u_1(t), u_1(t))\|_{L^2}   \\
                                & \lesssim t^{-\frac14}\|J(t)F_j(u_1(t), u_2(t))\|_{L^2}                           \\
                                & \lesssim t^{-\frac54}\left(\sum_{j=1}^2t^{\frac12}\|u_j(t)\|_{L^\infty}\right)^2
    \sum_{j=1}^2\|J(t)u_j(t)\|_{L^2}                                                                               \\
                                & \lesssim t^{-\frac54+\delta\epsilon^2}X_TY_T^2.
  \end{align*}
As for $\textrm{II}_j$ given in \eqref{eq:IIjdef}, the estimates 
\eqref{eq:Linfty}, \eqref{eq:wjinfty} and \eqref{eq:w1} yield
  \begin{align*}
    \|\textrm{II}_j\|_{L^\infty} & \lesssim t^{-1}
    \left(\sum_{j=1}^2\left\|U(-\tfrac{1}{t})w_j(t)\right\|_{L^\infty}^2 + \sum_{j=1}^2\|w_j(t)\|_{L^\infty}^2\right)
    \sum_{j=1}^2\left\|(U(-\tfrac{1}{t})-1)w_j(t)\right\|_{L^\infty}                          \\
                                 & \lesssim t^{-1}
    \left(\sum_{j=1}^2(t^{\frac12}\left\|u_j(t)\right\|_{L^\infty})^2 
    + \sum_{j=1}^2\|w_j(t)\|_{L^\infty}^2\right)
    \sum_{j=1}^2t^{-\frac14}\left\|\partial_x w_j(t)\right\|_{L^2}                            \\
                                 & \lesssim t^{-\frac54+3\delta\epsilon^2}(X_T^2 + Y_T^2)X_T.
  \end{align*}
  Thus, we end up with
  \begin{equation}\label{eq:rjest}
  \|r_j(t)\|_{L^\infty} \lesssim
  t^{-\frac54+3\delta\epsilon^2}(X_T^2 + Y_T^2)X_T
  \end{equation}
  in view of \eqref{eq:rj}.
  
  {\bf Step 3}. We complete the estimates of $Y_T$
under Assumption \ref{A1}.
One may suppose that $\lambda_1>\lambda_2$ without loss of generality.
Let 
  \begin{equation*}
    \rho_1 = |w_1|^2,\quad \rho_2 = |w_2|^2, \quad \mathcal{R} = 2\Re(\ol{w_1}w_2),\quad  \mathcal{I} = 2\Im(\ol{w_1}w_2) 
  \end{equation*}
  and for $\vec{a}={}^t\!(a_1,a_2,a_3) \in \R^3$,
  \begin{equation}
  \mathcal{Q}(\vec a) := \begin{bmatrix}
      \rho_1 & \mathcal{R} & \rho_2
    \end{bmatrix} \vec{a}= \begin{bmatrix}
    \overline{w_1} &
    \overline{w_2}
    \end{bmatrix}
    \begin{bmatrix}
      a_1 & a_2 \\ a_2 & a_3
    \end{bmatrix}
   \begin{bmatrix}
   w_1 \\ w_2
	\end{bmatrix}.\label{QA}
  \end{equation}
Then, we see from \eqref{eq:wsys} and Proposition \ref{prop:ode} that
  \begin{equation}\label{eq:rhosysgen}
    \frac{d}{dt}
   \mathcal{Q}(\vec a)  = 2t^{-1}\mathcal{I} 
    \mathcal{Q}(\mathscr{A}\vec a)
  + \vec{r}\cdot \vec{a}
    \end{equation}
    for any $\vec{a} \in \R^3$, where $\vec{r}=\vec{r}(t)$ is given by
    \begin{equation}\label{eq:rvec}
        \!\vec{r} =
    {}^t\begin{bmatrix}
      2\Im(\ol{w_1}r_1)                     &
      2\Im(\ol{w_1r_2}) + 2\Im(\ol{w_2}r_1) &
      2\Im(\ol{w_2}r_2)
    \end{bmatrix}.
  \end{equation}
Pick 
$
	\vec{p}_j = {}^t\!(
	p_{j,1} ,
	p_{j,2} ,
	p_{j,3} 
	)
	\in W(\lambda_j) \cap \mathcal{P}_+$ and define
\[
	Q_j := \mathcal{Q}(\vec{p}_j),\qquad
	R_j = \vec{r} \cdot \vec{p}_j
\]
for $j=1,2$.
Without loss of generality, we may suppose that $p_{j,1}>0$ so that
   \begin{equation}\label{eq:rhopmequiv}
   Q_1 \sim |w_1|^2 + |w_2|^2 \sim Q_2.
   \end{equation}
Then, one sees from \eqref{eq:rhosysgen} and the fact that $\vec{p}_j$ is eigenvector 
of $\mathscr{A}$ associated with $\lambda_j$ for $j=1,2$, respectively, that
\begin{equation}\label{eq:tildeode}
	\frac{d}{dt} Q_j = 2\lambda_j t^{-1} \mathcal{I} Q_j + R_j.
\end{equation}
  Let $m$ be a real number satisfying $m > 1/(\lambda_1-\lambda_2)$.
  Then, by \eqref{eq:tildeode},
  we have
  \begin{align*}
    \pt_t(Q_1^{-m\lambda_2}Q_2^{m\lambda_1})
     & = -m\lambda_2Q_1^{-m\lambda_2-1}Q_2^{m\lambda_1}  
     \pt_tQ_1
    + m\lambda_1Q_1^{-m\lambda_2}Q_2^{m\lambda_1-1}
    \pt_tQ_2     \\
     & = -m\lambda_2Q_1^{-m\lambda_2-1}Q_2^{m\lambda_1}
    (2t^{-1}\mathcal{I}\lambda_1Q_1 + R_1)\\
    &\quad+ m\lambda_1Q_1^{-m\lambda_2}Q_2^{m\lambda_1-1}(2t^{-1}\mathcal{I}\lambda_2Q_2 + R_2) \\
     & = -m\lambda_2 Q_1^{-m\lambda_2-1}Q_2^{m\lambda_1}R_1
    + m\lambda_1Q_1^{-m\lambda_2}Q_2^{m\lambda_1-1}R_2
  \end{align*}
  if $|w_1|^2+|w_2|^2>0$. This identity is valid also when $|w_1|^2+|w_2|^2=0$ in view of \eqref{eq:rhopmequiv} and $m(\lambda_1-\lambda_2)>1$.
  Integrating this equation on $[1,t]$, we see that
  \begin{align}\label{eq:inteq}
    Q_1(t)^{-m\lambda_2}Q_2(t)^{m\lambda_1}
     & = Q_1(1)^{-m\lambda_2}Q_2(1)^{m\lambda_1}                       \\ \nonumber
     & \quad + m\int_1^t(Q_1^{-m\lambda_2-1}Q_2^{m\lambda_1-1}
    (-\lambda_2Q_2R_1 + \lambda_1Q_1R_2))(\tau) d\tau.
\end{align}
  Substituting 
  \eqref{eq:wjinfty},  \eqref{eq:rjest}, and \eqref{eq:rhopmequiv} into \eqref{eq:inteq}, we obtain
  \begin{align*}
     & (|w_1(t)|^{2} + |w_2(t)|^{2})^{m(\lambda_1-\lambda_2)}   \\
     & \lesssim (|w_1(1)|^{2} + |w_2(t)|^{2})^{m(\lambda_1-\lambda_2)} \\
     &\qquad + 
    \int_1^t \(\|w_1(\tau)\|_{L^\infty}^2+\|w_2(\tau)\|_{L^\infty}^2\)^{m(\lambda_1-\lambda_2)-\frac12} (\| r_1(\tau)\|_{L^\infty}+\| r_2(\tau)\|_{L^\infty})
    d\tau                                                 \\
    & \lesssim \epsilon^{2m(\lambda_1-\lambda_2)} + (X_T^2 + Y_T^2)^{m(\lambda_1-\lambda_2)+1}
    \int_1^t\tau^{-\frac{5}{4}+2\{m(\lambda_1-\lambda_2)+1\}\delta\epsilon_0^2}d\tau                                                 \\
     & \lesssim \epsilon^{2m(\lambda_1-\lambda_2)} + (X_T^2 + Y_T^2)^{m(\lambda_1-\lambda_2)+1}
  \end{align*}
  as long as $-\frac{5}{4}+2\{m(\lambda_1-\lambda_2)+1\}\delta\epsilon_0^2 \le -\frac98$.
  Here, we note that 
\[
\|w_j(1)\|_{L^\infty} \lesssim\|u_j(1)\|_{L^2}^\frac12 
\|J(1)u_j(1)\|_{L^2}^ \frac12 \lesssim\epsilon
\]
follows from Proposition \ref{prop:local}.
This yields that
\begin{equation*}
    \sum_{j=1}^2\|w_j(t)\|_{L^\infty} \lesssim \epsilon + (X_T^2 + Y_T^2)^{\frac{1}{2}+\frac{1}{2m(\lambda_1-\lambda_2)}}.
  \end{equation*}
 Plugging this estimate with \eqref{yt}, we obtain
  \begin{equation*}
    Y_T \le C_2\epsilon + C_2(X_T^2 + Y_T^2)^{\frac{1}{2}+\frac{1}{2m(\lambda_1-\lambda_2)}} + C_2X_T
  \end{equation*}
  with some constant $C_2>0$.
  Substituting 
  \eqref{eq:boot1} into the above inequality, 
  we have
  \begin{align}
Y_T & \le C_2\epsilon + C_2(C_0^2 + \tilde{C}_0^2)^{\frac{1}{2}+\frac{1}{2m(\lambda_1-\lambda_2)}}
    \epsilon^{1+\frac{1}{m(\lambda_1-\lambda_2)}} + C_2C_0\epsilon        
                  \label{eq:YT1}    \\
                  & \le C_2(1+C_0)\epsilon
    + C_2(C_0^2 + \tilde{C}_0^2)^{\frac{1}{2}+\frac{1}{2m(\lambda_1-\lambda_2)}}\epsilon_0^{\frac{1}{m(\lambda_1-\lambda_2)}}\epsilon.
    \nonumber
  \end{align}
  
So far, we obtained (\ref{eq:XT1}) and \eqref{eq:YT1}, i.e.,
\begin{equation*}
    \left\{
    \begin{aligned}
      X_T & \le C_1(1+\delta^{-1}C_0\widetilde{C}_0^2)\epsilon, \\
      Y_T & \le C_2(1+C_0)\epsilon
      + C_2(C_0^2 + \tilde{C}_0^2)^{\frac{1}{2}+\frac{1}{2m(\lambda_1-\lambda_2)}}\epsilon_0^{\frac{1}{m(\lambda_1-\lambda_2)}}\epsilon.
    \end{aligned}
    \right.
 \end{equation*}
To conclude \eqref{eq:boot2}, we choose constants $\delta > 0$, $C_0$, $\tilde{C}_0$, and $\epsilon_0 > 0$ suitable.
  Firstly, we take the constant $C_0$ such that
  \begin{equation}\nonumber
    4C_1 \le C_0.
  \end{equation}
  Secondly  we choose the constant $\tilde{C}_0$ such that
  \begin{equation}\nonumber
    4C_2(1+C_0) \le \tilde{C}_0.
  \end{equation}
  Thirdly, we take the constant $\delta$ such that
  \begin{equation}\nonumber
    4C_1\tilde{C}_0^2 \le \delta,
  \end{equation}
  and finally, we take the constant $\epsilon_0$ such that
  \begin{equation}\nonumber
    4C_2(C_0^2 + \tilde{C}_0^2)^{\frac{1}{2}+\frac{1}{2m(\lambda_1-\lambda_2)}}\epsilon_0^{\frac{1}{m(\lambda_1-\lambda_2)}}
    \le \tilde{C}_0.
  \end{equation}
  Then it holds that
  \begin{equation}\nonumber
    X_T \le \tfrac14C_0\epsilon + \tfrac14C_0\epsilon = \tfrac12C_0\epsilon
  \end{equation}
  and
  \begin{equation}\nonumber
    Y_T \le \tfrac14\tilde{C}_0\epsilon + \tfrac14\tilde{C}_0\epsilon = \tfrac12 \tilde{C}_0\epsilon.
  \end{equation}
  Hence, we obtain \eqref{eq:boot2}.
  In the above argument, by choosing $\epsilon_0$ even smaller if necessary, we also assume that
  $-\frac{5}{4}+2\{m(\lambda_1-\lambda_2)+1\}\delta\epsilon_0^2 \le -\frac98$.

 {\bf Step 4}. We next derive the $L^\infty$ estimate of the solution 
 to (\ref{eq:2NLS}) under Assumption \ref{A2}. 
  Let 
	$\mathcal{Q}(\vec{a})$ be given by (\ref{QA}).
  Pick
 \[
	\vec{p}_j = {}^t\!(
	p_{j,1} ,
	p_{j,2} ,
	p_{j,3} 
	)
	\in W(\lambda_j) 
\]
so that 
$\vec{p}_1 + \vec{p}_2 \in \mathcal{P}_+$, $\vec{p}_{1}\neq0$, $\vec{p}_2\neq0$,
$\vec{p}_3 \in \mathcal{P}_+$, and $p_{3,1}>0$.
It is possible by assumption.
We introduce
\[
	Q_{+,j} := \mathcal{Q}(\vec{p}_j), \quad
	R_{+,j}:= \vec{r} \cdot \vec{p}_{j}
\]
for $j=1,2$ and
\[
	Q_{-} := \mathcal{Q}(\vec{p}_3), \quad R_{-}:= \vec{r} \cdot \vec{p}_3,
\]
where $\vec{r}$ is as in \eqref{eq:rvec}.
Then, one has
\begin{equation}\label{eq:tildeode2}
	\frac{d}{dt} Q_{+,j} = 2t^{-1} \mathcal{I} (\lambda_j  Q_{+,j}) + R_{+,j}
\end{equation}
for $j=1,2$ and
\begin{equation}\label{eq:tildeode3}
	\frac{d}{dt} Q_{-} = 2t^{-1} \mathcal{I} (\lambda_3  Q_{-}) + R_{-}.
\end{equation}
As in the previous step, since
$\vec{p}_3 \in \mathcal{P}_+$ and $p_{3,1}>0$, one sees that
 \begin{equation}\label{eq:rho2}   D_-^{-1} (|w_1|^2 + |w_2|^2) \le Q_- \le D_-(|w_1|^2 + |w_2|^2)
  \end{equation}
  for some constant $D_-\ge1$.
  Further,
by assumption $\vec{p}_1 + \vec{p}_2 \in \mathcal{P}_+$,
one has
\[
	|Q_{+,1} + Q_{+,2}| \gtrsim |w_1|^2 + |w_2|^2.
\]
Further, by an elementary computation,
\[
	|Q_{+,1} + Q_{+,2}|
	\le |Q_{+,1}| + |Q_{+,2}| \lesssim |w_1|^2 + |w_2|^2.
\]
Hence, there exists a constant $D_+\ge 1$ such that
\begin{equation}\label{eq:rho1r} 
 D_+^{-1}(|w_1|^2 + |w_2|^2) \le |Q_{+,1}| + |Q_{+,2}| \le D_+(|w_1|^2 + |w_2|^2).
\end{equation}
In particular, we see from \eqref{eq:rho2} and \eqref{eq:rho1r} that
\begin{equation}\label{eq:rho3}
	\max (|Q_{+,1}| , |Q_{+,2}|) \le D_+D_- Q_{-}.
\end{equation}

For $j=1,2$,
we define
\[
	\theta_j = (1-\tfrac{\lambda_j}{\lambda_3})^{-1} >0.
\]
We may suppose that $\theta_1>\theta_2$ without loss of generality.
We note that the elementary relation
\begin{equation}\label{eq:ltrelation}
	\lambda_j \theta_j + \lambda_3 (1-\theta_j) = 0
\end{equation}
holds for $j=1,2$.
Fix $m >1$ so that $m\theta_2>1$.
For $j=1,2$, by \eqref{eq:tildeode2}, \eqref{eq:tildeode3}, 
we have
  \begin{align*}
     & \pt_t(|Q_{+,j}|^{m\theta_j}Q_-^{m (1-\theta_j)})\\
    &= m\theta_j|Q_{+,j}|^{m\theta_j-2}
    Q_{+,j}\pt_tQ_{+,j}Q_-^{m(1- \theta_j)}
    + m(1-\theta_j)|Q_{+,j}|^{m\theta_j}
    Q_-^{m(1-\theta_j)-1}\pt_tQ_-\\
    &= m\theta_j|Q_{+,j}|^{m\theta_j-2}
    Q_{+,j}(2\lambda_j t^{-1}\mathcal{I}Q_{+,j} + R_{+,j})Q_-^{m(1-\theta_j)} \\
    &\quad
    + m(1-\theta_j) |Q_{+,j}|^{m\theta_j} Q_-^{m(1-\theta_j)-1}(2\lambda_3t^{-1}\mathcal{I}Q_- + R_-)\\
	&=  m\theta_j|Q_{+,j}|^{m\theta_j-2}
    Q_{+,j}Q_-^{m(1-\theta_j)}R_{+,j} + m(1-\theta_j) |Q_{+,1}|^{m\theta_j} Q_-^{m(1-\theta_j)-1} R_-,
  \end{align*}
  where we have used \eqref{eq:ltrelation} to obtain the final line.
 Note that this identity is valid also when $w_1=w_2=0$ since the right hand side becomes zero in view of \eqref{eq:rho3},
 $m\theta_j>1$, and $m>1$.
  Integrating this equation on $[1,t]$ and using the equivalence relations \eqref{eq:rho2} and \eqref{eq:rho3}, and 
the estimates \eqref{eq:wjinfty} and \eqref{eq:rjest}, one obtains
\begin{equation}\label{eq:rhopjest}
\begin{aligned}
\lefteqn{|Q_{+,j}(t)|^{m\theta_j}Q_-(t)^{m (1-\theta_j)}}\qquad\qquad\\
	&\lesssim {}|Q_{+,j}(1)|^{m\theta_j}Q_-^{m(1- \theta_j)}(1)\\
	&+ \int_1^t \(\|w_1(\tau)\|_{L^\infty}^2+\|w_2(\tau)\|_{L^\infty}^2\)^{m-\frac12} (\| r_{1}(\tau)\|_{L^\infty}+\| r_2(\tau)\|_{L^\infty})
    d\tau                                                \\
     &\lesssim \eps^{2m}  +  (X_T^2+Y_T^2)^{m+1}   \int_1^t\tau^{-\frac{5}{4}+2(m+1)\delta\epsilon_0^2}d\tau                                                 \\
     &\lesssim \eps^{2m} + C (X_T^2+Y_T^2)^{m+1}
\end{aligned}
\end{equation}
as long as $-\frac{5}{4}+2(m+1)\delta\epsilon_0^2 \le -\frac98$.
Further, recalling $m\theta_1>m\theta_2>1$, we see 
 from \eqref{eq:rho3} and \eqref{eq:rho1r} that 
  \begin{align*}
    & |Q_{+,1}(t)|^{m\theta_1}Q_-(t)^{m(1-\theta_1)}
    +  |Q_{+,2}(t)|^{m\theta_2}Q_-(t)^{m(1-\theta_2)}\\
    &\ge |Q_{+,1}(t)|^{m\theta_1}Q_-(t)^{m(1-\theta_1)}
    + |Q_{+,2}(t)|^{m\theta_2}\((D_+D_-)^{-1}|Q_{+,2}(t)|\)^{m (\theta_1-\theta_2)}Q_-(t)^{m(1-\theta_1)}\\
    &\ge (D_+D_-)^{-m(\theta_1-\theta_2)} (|Q_{+,1}(t)|^{m\theta_1}+ |Q_{+,2}(t)|^{m\theta_1})Q_-(t)^{m(1-\theta_1)}\\
    &\ge 2^{1-m\theta_1} (D_+D_-)^{-m(\theta_1-\theta_2)} (|Q_{+,1}(t)|+ |Q_{+,2}(t)|)^{m\theta_1}Q_-(t)^{m(1-\theta_1)}\\
    &\ge 2^{1-m\theta_1}  (D_+D_-)^{-m(\theta_1-\theta_2)}  D_+^{-m\theta_1}D_-^{-m(1-\theta_1)}
   (|w_1(t)|^2+ |w_2(t)|^2)^{m}.
  \end{align*}
Thus, we obtain
\[
	(|w_1(t)|^2+ |w_2(t)|^2)^{m}
	\lesssim \eps^{2m} + C (X_T^2+Y_T^2)^{m+1},
\]
which shows
\begin{equation}\label{eq:YT2}
	Y_T	\le C_3 \eps + C_3 (X_T^2+Y_T^2)^{\frac12+ \frac1{2m}}. 
\end{equation}
Arguing as in Step 3, we are able to choose constants $C_0$ and $\tilde{C}_0$ suitably and
obtain \eqref{eq:boot2}.
%
\end{proof}

\begin{remark}
The global existence result for the ODE system (Theorem \ref{thm:ODE}) follows 
from the arguments used in Steps 3 and 4 of the proof.
\end{remark}

We conclude this section with a global-in-time estimate on the error terms $r_j$ given in \eqref{eq:rj}.
\begin{corollary}\label{cor:rj}
  Let $r_j$ 
  be as in \eqref{eq:rj}.
  Let $\delta$ and $\epsilon_0$ be as in Proposition \ref{prop:global}.
  If $\epsilon= \|u_{0,1}\|_{H^{0,1}} + \|u_{0,2}\|_{H^{0,1}}
\le\epsilon_0$, then it holds that
  \begin{equation}\label{eq:estrj}
    \|r_j(t)\|_{L^2} \lesssim \epsilon^3t^{-\frac{3}{2}+3\delta\epsilon^2}, \quad
    \|r_j(t)\|_{L^\infty} \lesssim \epsilon^3t^{-\frac54+3\delta\epsilon^2}
  \end{equation}
  for $t > 1$.
\end{corollary}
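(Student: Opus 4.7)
The $L^\infty$ bound is essentially already contained in Step 2 of the proof of Proposition~\ref{prop:global}, where the estimate
\[
\|r_j(t)\|_{L^\infty}\lesssim t^{-\frac54+3\delta\epsilon^2}(X_T^2+Y_T^2)X_T
\]
was derived. Combining this with the global bound $X_T+Y_T\lesssim\epsilon$ supplied by Proposition~\ref{prop:global} immediately yields the $L^\infty$ claim.

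For the $L^2$ bound I would revisit the estimates of $\textrm{I}_j$ and $\textrm{II}_j$ without the Gagliardo--Nirenberg interpolation used there. The key gain is the elementary smoothing inequality
\[
\|(U(s)-1)f\|_{L^2}\lesssim |s|^{\frac12}\|\partial_x f\|_{L^2},
\]
which follows on the Fourier side from $|e^{-is\xi^2/2}-1|\lesssim |s|^{\frac12}|\xi|$. Applied with $s=1/t$ to $\textrm{I}_j$, this gives $\|\textrm{I}_j\|_{L^2}\lesssim t^{-\frac12}\|\partial_x D(t)^{-1}M(t)^{-1}F_j(u_1,u_2)\|_{L^2}$. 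Using the operator identities $\partial_x D(t)^{-1}=t\,D(t)^{-1}\partial_x$ and $\partial_x M(t)^{-1}=(it)^{-1}M(t)^{-1}J(t)$, together with the $L^2$-isometry of $M(t)^{\pm1}$ and $D(t)^{\pm1}$, this quantity collapses to $\|J(t)F_j(u_1,u_2)\|_{L^2}$. The product rule for $J(t)$ on gauge-invariant cubic nonlinearities then yields
\[
\|J(t)F_j(u_1,u_2)\|_{L^2}\lesssim \Big(\sum_{k=1}^{2}\|u_k\|_{L^\infty}^2\Big)\sum_{k=1}^{2}\|J(t)u_k\|_{L^2}\lesssim t^{-1+\delta\epsilon^2}\epsilon^3,
\]
so $\|\textrm{I}_j\|_{L^2}\lesssim \epsilon^3 t^{-\frac32+\delta\epsilon^2}$.

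For $\textrm{II}_j$ I would expand the cubic difference $F_j(U(-\tfrac1t)w_1,U(-\tfrac1t)w_2)-F_j(w_1,w_2)$ as a sum of terms each containing exactly one factor of the form $(U(-\tfrac1t)-1)w_a$; placing this factor in $L^2$ via the smoothing inequality gives $\|(U(-\tfrac1t)-1)w_a\|_{L^2}\lesssim t^{-\frac12}\|\partial_\xi w_a\|_{L^2}\lesssim t^{-\frac12+\delta\epsilon^2}\epsilon$, while the remaining two factors go into $L^\infty$ and are controlled by $\epsilon$ or $t^{\delta\epsilon^2}\epsilon$ by means of \eqref{eq:Linfty} and \eqref{eq:wjinfty}. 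The overall $t^{-1}$ prefactor in \eqref{eq:IIjdef} then produces $\|\textrm{II}_j\|_{L^2}\lesssim \epsilon^3 t^{-\frac32+3\delta\epsilon^2}$, and summing the two contributions gives the corollary. The only nontrivial point is the operator-algebra manipulation in the $\textrm{I}_j$ step, which is what turns the naive $L^2$ size $\|F_j\|_{L^2}\lesssim t^{-1}\epsilon^3$ into the improved $t^{-\frac32}$ decay by trading one $\partial_x$ for the gain from $U(1/t)-1$ without losing any power of $t$.
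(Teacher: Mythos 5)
Your proposal is correct and follows essentially the same route as the paper: the $L^\infty$ bound is read off from \eqref{eq:rjest} together with the global bounds $X_T,Y_T\lesssim\epsilon$ from Proposition \ref{prop:global}, and the $L^2$ bound is obtained by redoing the estimates of $\mathrm{I}_j$ and $\mathrm{II}_j$ with the smoothing bound $\|(U(s)-1)f\|_{L^2}\lesssim|s|^{1/2}\|\partial_x f\|_{L^2}$ in place of the Gagliardo--Nirenberg step, reducing $\mathrm{I}_j$ to $\|J(t)F_j(u_1,u_2)\|_{L^2}$ exactly as in Step 2 of the proof of Proposition \ref{prop:global}. The operator identities and the factorization of the cubic difference in $\mathrm{II}_j$ that you spell out are precisely what the paper uses implicitly, so there is no gap.
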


\begin{proof}
By Proposition \ref{prop:global},
\eqref{eq:boot1} is true for all $T>0$.
Hence, the $L^\infty$-estimate for $r_j$ 
follows from \eqref{eq:rjest}.
Let us prove $L^2$-estimate.
Arguing as in Step 2 of the proof of Proposition \ref{prop:global},
one has
  \begin{align*}
    \|\textrm{I}_j\|_{L^2} & \lesssim t^{-\frac12}
\|\partial_xD(t)^{-1}M(t)^{-1}F_j(u_1(t), u_1(t))\|_{L^2}   \\
                                  & \lesssim t^{-\frac32+\delta\epsilon^2}X_TY_T^2.
  \end{align*}
Similarly,
  \begin{align*}
    \|\textrm{II}_j\|_{L^2} & \lesssim t^{-1}
    \left(\sum_{j=1}^2\left\|U(-\tfrac{1}{t})w_j(t)\right\|_{L^\infty}^2 + \sum_{j=1}^2\|w_j(t)\|_{L^\infty}^2\right)
    \sum_{j=1}^2\left\|(U(-\tfrac{1}{t})-1)w_j(t)\right\|_{L^2}                          \\
                                 & \lesssim t^{-1}
    \left(\sum_{j=1}^2(t^{\frac12}\left\|u_j(t)\right\|_{L^\infty})^2 + 
    \sum_{j=1}^2\|w_j(t)\|_{L^\infty}^2\right)
    \sum_{j=1}^2t^{-\frac12}\left\|\partial_x w_j(t)\right\|_{L^2}                            \\
                                 & \lesssim t^{-\frac32+3\delta\epsilon^2}(X_T^2 + Y_T^2)X_T.
\end{align*}
Thus we have \eqref{eq:estrj} for $L^2$ norm of $r_j$. 
This completes the proof.
\end{proof}

\section{Approximation with a solution to the ODE system}

In this section, we prove the asymptotic formula 
(\ref{eq:asymptest}) in Theorem \ref{thm:main}. 
We introduce $(\alpha_1(t,\xi), \alpha_2(t,\xi))$ by
  \begin{equation}
    \alpha_j(\log t,\xi) = w_j(t,\xi)
  \end{equation}
  for $t > 0$, where $w_j(t,\xi)={{\mathcal F}}[U(-t)u_j(t,\cdot)](\xi)$. 
  We see that $(\alpha_1, \alpha_2)$ solves
  \begin{equation}
    i\partial_t\alpha_j = F_j(\alpha_1, \alpha_2) + \widetilde{r}_j
  \end{equation}
  for $j = 1, 2$, where
  \begin{equation}\label{eq:trj}
    \widetilde{r}_j(t, \xi) = e^tr_j(e^t, \xi).
  \end{equation}
We first give the error estimate in Corollary \ref{cor:rj}
in terms of $\tilde{r}_j$.
\begin{proposition}\label{prop:rj}
  Let $\tilde{r}_j$ be as in  \eqref{eq:trj}.
  Let $\delta$ and $\epsilon_0$ be as in Proposition \ref{prop:global}.
  If $\epsilon < \epsilon_0$, then it holds that
  \begin{equation}\label{eq:esttrj}
    \|\tilde{r}_j(t)\|_{L^2} \lesssim \epsilon^3e^{-(\frac12-3\delta\epsilon^2)t}, \quad
    \|\tilde{r}_j(t)\|_{L^\infty} \lesssim \epsilon^3e^{-(\frac14-3\delta\epsilon^2)t}
  \end{equation}
for $t > -1$.
\end{proposition}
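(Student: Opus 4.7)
The plan is to obtain Proposition \ref{prop:rj} as a direct rescaling of Corollary \ref{cor:rj}. From the definition $\tilde{r}_j(t,\xi) = e^t r_j(e^t,\xi)$, and since the change of variable affects only the time coordinate, one has $\|\tilde{r}_j(t)\|_{L^p} = e^t \|r_j(e^t)\|_{L^p}$ for any $p \in [1,\infty]$.

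For $t \ge 0$, I would apply Corollary \ref{cor:rj} at $s := e^t \ge 1$, which yields
\[
\|\tilde{r}_j(t)\|_{L^2} \lesssim e^t \cdot \epsilon^3 (e^t)^{-\frac{3}{2} + 3\delta\epsilon^2} = \epsilon^3 e^{-(\frac{1}{2} - 3\delta\epsilon^2)t},
\]
and analogously $\|\tilde{r}_j(t)\|_{L^\infty} \lesssim \epsilon^3 e^{-(\frac{1}{4} - 3\delta\epsilon^2)t}$. This establishes the desired bounds in the main asymptotic regime $t \ge 0$.

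For the remaining range $t \in (-1, 0)$, we have $s = e^t \in (e^{-1}, 1)$, where Corollary \ref{cor:rj} does not apply directly. On this range I would instead use Proposition \ref{prop:local}, which gives $\|u_j(s)\|_{L^2} + \|J(s)u_j(s)\|_{L^2} \lesssim \epsilon$ uniformly for $s \in [-1,1]$. Unwinding the definitions \eqref{eq:Ijdef} and \eqref{eq:IIjdef} of $\textrm{I}_j$ and $\textrm{II}_j$, and arguing as in Step 2 of the proof of Proposition \ref{prop:global} (the cubic nonlinearities $F_j(u_1,u_2)$ are controlled through Gagliardo-Nirenberg in terms of the $L^2$- and $J$-norms, with $s$ staying bounded away from $0$), one obtains $\|r_j(s)\|_{L^2} + \|r_j(s)\|_{L^\infty} \lesssim \epsilon^3$. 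Since the prefactors $e^{-(\frac{1}{2}-3\delta\epsilon^2)t}$ and $e^{-(\frac{1}{4}-3\delta\epsilon^2)t}$ are bounded above and below by positive constants on the bounded interval $(-1,0]$, the claimed estimates follow trivially on this transitional range.

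There is no essential obstacle: the proof amounts to a logarithmic change of time variable applied to the power-law bounds already proved in Corollary \ref{cor:rj}, supplemented by a routine local bound on the short range $t \in (-1,0)$.
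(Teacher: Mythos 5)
Your argument is correct and is essentially the paper's (implicit) one: the paper states Proposition \ref{prop:rj} without proof precisely because it follows from Corollary \ref{cor:rj} through the identity $\|\tilde r_j(t)\|_{L^p}=e^t\|r_j(e^t)\|_{L^p}$ and the substitution $s=e^t$, exactly as you compute for $t\ge 0$. Your additional treatment of the transitional range $t\in(-1,0]$ via Proposition \ref{prop:local} and the boundedness of all powers of $s=e^t$ on $(e^{-1},1]$ is a correct (and slightly more careful) way to cover the stated range $t>-1$, which the paper glosses over.
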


\begin{proof}[Proof of Theorem \ref{thm:main}] 
We now prove \eqref{eq:asymptest} in Theorem  \ref{thm:main}. 
  We construct a one-parameter family 
  $\{(\phi_1(\cdot, \xi), \phi_2(\cdot, \xi))\}_{\xi \in \R}$ of solutions to
  the ODE system \eqref{eq:ODE} satisfying 
  \begin{equation}
    \notag
    \|\phi_j(t)-\alpha_j(t)\|_{L^2} \lesssim \epsilon^3
    e^{-(\frac12-3\delta\epsilon^2)t}, \quad
    \|\phi_j(t)-\alpha_j(t)\|_{L^\infty} \lesssim \epsilon^3e^{-(\frac14-3\delta\epsilon^2)t}
  \end{equation}
  for $t \ge 1$. 
Employing the idea of Hayashi-Li-Naumkin 
\cite{HLN}, we construct a sequence of approximate
  solution $\{(\phi_1^{(n)}, \phi_2^{(n)})\}_{n \in \N_0}$.
  We first define $\phi_j^{(0)}(t,\xi) := \alpha_j(t,\xi)$, namely $\phi_j^{(0)}$ satisfy
  \begin{equation}
    i\frac{d}{dt}\phi_j^{(0)} = F_j(\phi_1^{(0)}, \phi_2^{(0)}) + \tilde{r}_j.
  \end{equation}
  For $n \ge 1$, we inductively define $(\phi_1^{(n)}, \phi_2^{(n)})$ by
  \begin{equation}
    \left\{
    \begin{aligned}
      i\frac{d}{dt}\phi_1^{(n)}(t,\xi) & = F_1(\phi_1^{(n-1)}(t,\xi), \phi_2^{(n-1)}(t,\xi)), \quad t \ge 1, \\
      i\frac{d}{dt}\phi_2^{(n)}(t,\xi) & = F_2(\phi_1^{(n-1)}(t,\xi), \phi_2^{(n-1)}(t,\xi)), \quad t \ge 1,
    \end{aligned}
    \right.
  \end{equation}
  with the final state condition
  \begin{equation}
    \lim_{t \to \infty} \|\phi_j^{(n)}(t) - \alpha_j(t)\|_{L^\infty} = 0.
  \end{equation}
  Taking the difference between $\phi_j^{(0)}$ and $\phi_j^{(1)}$ and integrating on $[t,\infty)$, we have
  the integral form
  \begin{equation}\label{eq:if0}
    \phi_j^{(1)}(t,\xi) = \alpha_j(t,\xi) +i \int_t^\infty\tilde{r}_j(s, \xi) ds.
  \end{equation}
  Taking the difference between $\phi_j^{(n)}$ and $\phi_j^{(n-1)}$, we also have the integral form
  \begin{eqnarray}
   \lefteqn{\phi_j^{(n)}(t,\xi) -\phi_j^{(n-1)}(t,\xi)}\label{eq:ifn}\\
    &=&-i \int_t^\infty\left(F_j(\phi_1^{(n-1)}(s,\xi), \phi_2^{(n-1)}(s,\xi))
    - F_j(\phi_1^{(n-2)}(s,\xi), \phi_2^{(n-2)}(s,\xi))\right)ds
    \nonumber
  \end{eqnarray}
  for $n \ge 2$.
  Hereafter, we consider the integral form \eqref{eq:if0} and \eqref{eq:ifn}.
  We claim that $\{(\phi_1^{(n)}, \phi_2^{(n)})\}_{n \in \N_0}$ belongs to
   \begin{equation}
    Z := \left\{(a_1, a_2) \in (C([1,\infty)\times \R))^2\ ;\
    \sum_{j=1}^2 \|a_j(t) - \alpha_j(t)\|_{L^p} 
       \le C_1\epsilon^3e^{-(\frac14+\frac{1}{2p}-3\delta\epsilon^2)t} 
       \quad\text{for}\ p=2,\infty
    \right\}
  \end{equation}
  for suitable $C_1>0$. Since Proposition \ref{prop:global} implies that there exists $C_0$ such that
  \begin{equation}
    \sum_{j=1}^2\|\alpha_j(t)\|_{L^\infty([1,\infty)\times\R)} \le C_0\epsilon,
  \end{equation}
  we see that for any choice of $C_1 > 0$ there exists $\epsilon_0 > 0$ such that for any $\epsilon \in (0,\epsilon_0]$
  \begin{equation}
    \sum_{j=1}^2\|a_j(t)\|_{L^\infty([1,\infty)\times\R)} \le 2C_0\epsilon
  \end{equation}
  holds for any $(a_1, z_2) \in Z$.
  By the definition of the space $Z$, 
  we easily see $(\phi_1^{(0)}, \phi_2^{(0)}) \in Z$. 
  Further, thanks to \eqref{eq:esttrj}, there exists $C_2 > 0$ such that
  \begin{equation}
    \sum_{j=1}^2\|\phi_j^{(1)}(t) - \phi_j^{(0)}(t)\|_{L^p}
    = \sum_{j=1}^2\left\|\int_t^\infty\tilde{r}_j(s)ds\right\|_{L^p}
    \le C_2\epsilon^3e^{-(\frac14 + \frac{1}{2p}-3\delta\epsilon^2)t}
  \end{equation}
  for $p = 2, \infty$ and $t \ge 1$.
  We now choose $C_1 = 2C_2$. Then $(\phi^{(1)}_1, \phi^{(2)}_1) \in Z$.
  Now, suppose for some $k \ge 1$ that $(\phi^{(n)}_1, \phi^{(n)}_1) \in Z$ and
  \begin{equation}\label{eq:induction}
    \sum_{j=1}^2\|\phi^{(n)}_j(t) - \phi^{(n-1)}_j(t)\|_{L^p} \le 2^{-n+1}C_2\epsilon^3e^{-(\frac14+\frac{1}{2p}-3\delta\epsilon^2)t}
  \end{equation}
  hold for $p=2, \infty$, $t \ge 1$, and $n \in [1,k]$. Then, since
  \[
    |F_j(\phi^{(k)}_1, \phi^{(k)}_2) - F_j(\phi^{(k-1)}_1, \phi^{(k-1)}_2)|
    \le C\left(
    \sum_{j=1}^2|\phi^{(k)}_j| + \sum_{j=1}^2|\phi^{(k-1)}_j|
    \right)^2
    \sum_{j=1}^2|\phi^{(k)}_j - \phi^{(k-1)}_j|,
  \]
  we see from the assumption of the induction that $(\phi^{(k+1)}_j, \phi^{(k+1)}_j)$ is well-defined
  as an element of $C([0,\infty) \times \R)$ and the bound
  \begin{align*}
    \sum_{j=1}^2\|\phi^{(k+1)}_j(t) - \phi^{(k)}_j\|_{L^p}
     & \le 16C_0^2C\epsilon^2_0
     \int_t^\infty2^{-k+1}C_2\epsilon^3
     e^{-(\frac{1}{4}
    + \frac{1}{2p}-3\delta\epsilon^2)s}ds                                                           \\
     & \le (128C_0^2C\epsilon_0^2)2^{-k}
     C_2\epsilon^3e^{-(\frac14 + \frac{1}{2p}-3\delta\epsilon^2)t}
  \end{align*}
  holds for $p = 2, \infty$, and $t \ge 1$, which implies \eqref{eq:induction} for $n = k+1$
  if $\epsilon_0$ is chosen so that $128C_0^2C\epsilon_0^2 \le 1$. Moreover,
  \begin{equation} \nonumber
    \sum_{j=1}^2\|\phi_j^{(k+1)}(t) - \alpha_j(t)\|_{L^p}
    \le \sum_{\ell =1}^{k+1}\sum_{j=1}^2\|\phi_j^{(\ell)}(t) - \phi_j^{(\ell-1)}(t)\|_{L^p}
    \le 2C_2\epsilon^3e^{-(\frac{1}{4}+\frac{1}{2p}-3\delta\epsilon^2)t}
  \end{equation}
  holds for $p = 2, \infty$, and $t \ge 1$, and hence $(\phi_1^{(k+1)}, \phi_2^{(k+1)}) \in Z$.
  Thus an induction argument shows that $\{(\phi_1^{(n)}, \phi_2^{(n)})\}_{n \in \N_0} \in Z$
  is well-defined and \eqref{eq:induction} holds true for all $n \in \N$.
  One easily sees that $Z$ is complete with respect to the metric on $(L^\infty([1,\infty);L^2(\R)\cap L^\infty(\R)))^2$.
  Note that \eqref{eq:induction} implies that $\{(\phi_1^{(n)}, \phi_2^{(n)})\}_{n \in \N_0} \subset Z$ is
  a Cauchy sequence with respect to this metric. Hence, we finds the limit $(\phi_1, \phi_2)\in Z$.
  Let us confirm that the limit $(\phi_1, \phi_2)$ is a solution to \eqref{eq:ODE}.
  To this end, we note that \eqref{eq:induction} implies that $\{(\phi_1^{(n)}, \phi_2^{(n)})\}_{n \in \N_0}$
  converges to $(\phi_1, \phi_2)$ also in $(L^1(1,\infty;L^\infty(\R))^2)$.
  Hence,
  \begin{equation*}
    \int_t^\infty F_j(\phi_1^{(n)}, \phi_2^{(n)})(s,\xi) ds \to \int_t^\infty F_j(\phi_1, \phi_2)(s,\xi)ds
  \end{equation*}
  in $L^\infty([1,\infty)\times \R)$ as $n \to \infty$. Then, by using \eqref{eq:if0} and the definition of
  the sequence, it holds for each $t_1, t_2 \in [1, \infty)$ that
  \begin{align*}
    \phi_j(t_2) - \phi_j(t_1)
     & = \lim_{n \to \infty}(\phi_j^{(n)}(t_2) - \phi_j^{(n)}(t_1)) \\
     & = \phi_j^{(1)}(t_2) - \phi_j^{(1)}(t_1)
    - i\lim_{n \to \infty}\int_{t_1}^{t_2} F_j(\phi_1^{(n-1)}, \phi_2^{(n-1)})ds
    + i\int_{t_1}^{t_2} F_j(\phi_1^{(0)}, \phi_2^{(0)})ds           \\
     & = - i \int_{t_1}^{t_2} F_j(\phi_1, \phi_2)ds.
  \end{align*}
  This shows that $(\phi_1, \phi_2)$ is a solution to \eqref{eq:ODE}.

  Finally, we show the asymptotic estimate \eqref{eq:asymptest}.
  We see from Proposition \ref{prop:rj} and the fact that $(\phi_1, \phi_2) \in Z$ that
  \begin{align*}
    u_j(t) & = (it)^{-\frac12}e^{\frac{i|x|^2}{2t}}w_j(t, \tfrac{x}{t})
    + O(\epsilon t^{-\frac34 +\frac{1}{2p}+\delta\epsilon^2})                              \\
           & = (it)^{-\frac12}e^{\frac{i|x|^2}{2t}}\alpha_j(\log t, \tfrac{x}{t})
    + O(\epsilon t^{-\frac34+\frac{1}{2p}+\delta\epsilon^2})                               \\
           & = (it)^{-\frac12}e^{\frac{i|x|^2}{2t}}\phi_j(\log t, \tfrac{x}{t})
    + O(\epsilon^3t^{-\frac34+\frac{1}{2p}+3\delta\epsilon^2})
    + O(\epsilon t^{-\frac34+\frac{1}{2p}+\delta\epsilon^2})                               \\
           & = (it)^{-\frac12}e^{\frac{i|x|^2}{2t}}\phi_j(\log t, \tfrac{x}{t})
    + O(\epsilon t^{-\frac34+\frac{1}{2p}+\delta\epsilon^2})
  \end{align*}
  in $L^p(\R)$ as $t \to \infty$ for $p = 2, \infty$. The non-endpoint case $p \in (2,\infty)$
  follows by interpolation. Writing $3\delta$ as $\delta$, we obtain \eqref{eq:asymptest}. 
  This completes the proof of Theorem \ref{thm:main}. 
\end{proof}

\section{Normalization of \eqref{eq:2NLS}}

In this section, we prove Theorems \ref{thm:standard1} 
and \ref{thm:standard2}. To this end, we note that 
the system \eqref{eq:2NLS} and \eqref{eq:ODE} are closed under the linear transform of the unknowns.
The change of the system is described
as a matrix manipulation.
\begin{proposition}[\cite{MaSeUr2}]\label{P:change}
Let $(\mathscr{A},\mathscr{V})$ be a matrix-vector representation of a system of the form \eqref{eq:2NLS} or \eqref{eq:ODE}
with unknown $(u_1,u_2)$.
Define a new unknown $(v_1,v_2)$ via
\[
	\begin{bmatrix} v_1 \\ v_2 \end{bmatrix} = \mathscr{M} \begin{bmatrix} u_1 \\ u_2 \end{bmatrix},\quad
	\mathscr{M} = \begin{bmatrix} a & b \\ c & d \end{bmatrix} \in \text{GL}_2(\R).
\]
Let $(\mathscr{A}',\mathscr{V}')$ be the matrix-vector representation of the system for $(v_1,v_2)$. Then,
\begin{equation}\label{E:Aprime}
	\mathscr{A}' = \frac{1}{\det \mathscr{M}}  \mathscr{D}(\mathscr{M}) \mathscr{A} \mathscr{D}(\mathscr{M})^{-1}
\end{equation}
and
\begin{equation}\label{E:Vprime}
	\mathscr{V}'
	= \frac1{\det \mathscr{M}}  \mathscr{D}(\mathscr{M}) \mathscr{V}
\end{equation}
hold, where
\[
	\mathscr{D}(\mathscr{M}) := \frac1{\det \mathscr{M}}
	\begin{bmatrix} d^2 & -2cd & c^2 \\ -bd & ad+ bc & -ac \\ b^2 & -2ab & a^2 \end{bmatrix}
	\in {SL}_3 (\R),
\]
\begin{equation*}
	\mathscr{D}(\mathscr{M})^{-1} =\frac1{\det \mathscr{M}}\begin{bmatrix} a^2 & 2ac & c^2 \\ ab & ad+ bc & cd \\ b^2 & 2bd & d^2 \end{bmatrix} \in SL_3(\R).
\end{equation*}
Moreover, 
$\vec{p}\in \R^3$ is an eigenvector of $\mathscr{A}$ associated with an eigenvalue $\lambda \in \C$ if and only if
$\mathscr{D}(\mathscr{M})\vec{p}\in \R^3$ is an eigenvector of $\mathscr{A}'$ associated with an eigenvalue $(\det \mathscr{M})^{-1}\lambda \in \C$.
Furthermore, for any $\mathscr{M}\in GL_2(\R)$, $\mathscr{D}(\mathscr{M})\vec{p}\in \mathcal{P}_+$ if and only if $\vec{p}\in \mathcal{P}_+$.
\end{proposition}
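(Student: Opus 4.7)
The plan is to derive the transformation laws by tracking how the quadratic quantities $\rho_1,\rho_2,\mathcal{R},\mathcal{I}$ and the potential $\mathcal{V}$ behave under $v=\mathscr{M} u$, and then read off $\mathscr{A}'$ and $\mathscr{V}'$ using the uniqueness of Proposition \ref{prop:ode} together with the parametrization \eqref{E:CVNLS}. First I would identify each vector $\vec{a}={}^t\!(a_1,a_2,a_3)\in\R^3$ with the real symmetric matrix $A(\vec{a}):=\begin{bmatrix} a_1 & a_2 \\ a_2 & a_3 \end{bmatrix}$, so that $\mathcal{Q}(\vec{a})=\begin{bmatrix} \overline{u_1} & \overline{u_2}\end{bmatrix}A(\vec{a})\begin{bmatrix} u_1 \\ u_2 \end{bmatrix}$. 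Using $u=\mathscr{M}^{-1}v$, one obtains $\mathcal{Q}(\vec{a})=\begin{bmatrix} \overline{v_1} & \overline{v_2}\end{bmatrix}\bigl((\mathscr{M}^{-1})^{T}A(\vec{a})\mathscr{M}^{-1}\bigr)\begin{bmatrix} v_1 \\ v_2 \end{bmatrix}$, and a direct computation using $\mathscr{M}^{-1}=(\det\mathscr{M})^{-1}\begin{bmatrix} d & -b \\ -c & a \end{bmatrix}$ identifies the coefficient vector of this new quadratic form in $v$ as $(\det\mathscr{M})^{-1}\mathscr{D}(\mathscr{M})\vec{a}$; this simultaneously recovers the explicit formula for $\mathscr{D}(\mathscr{M})$ stated in the proposition. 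A separate one-line calculation yields $\widetilde{\mathcal{I}}:=2\Im(\overline{v_1}v_2)=(\det\mathscr{M})\mathcal{I}$.

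Given these transformation rules, \eqref{E:Aprime} follows from Proposition \ref{prop:ode}. Applying that proposition in the $u$-coordinates gives $\tfrac{d}{dt}\mathcal{Q}_u(\vec{a})=2\mathcal{I}\,\mathcal{Q}_u(\mathscr{A}\vec{a})$. Setting $\vec{a}':=(\det\mathscr{M})^{-1}\mathscr{D}(\mathscr{M})\vec{a}$ and substituting $\mathcal{Q}_u(\vec{a})=\mathcal{Q}_v(\vec{a}')$, $\mathcal{I}=(\det\mathscr{M})^{-1}\widetilde{\mathcal{I}}$, and $\mathcal{Q}_u(\mathscr{A}\vec{a})=\mathcal{Q}_v((\det\mathscr{M})^{-1}\mathscr{D}(\mathscr{M})\mathscr{A}\vec{a})=\mathcal{Q}_v(\mathscr{D}(\mathscr{M})\mathscr{A}\mathscr{D}(\mathscr{M})^{-1}\vec{a}')$, the identity reduces to $\tfrac{d}{dt}\mathcal{Q}_v(\vec{a}')=2\widetilde{\mathcal{I}}\,\mathcal{Q}_v\bigl((\det\mathscr{M})^{-1}\mathscr{D}(\mathscr{M})\mathscr{A}\mathscr{D}(\mathscr{M})^{-1}\vec{a}'\bigr)$. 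Polarizing in $\vec{a}'\in\R^3$ and matching with Proposition \ref{prop:ode} applied in the $v$-coordinates yields \eqref{E:Aprime}.

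For \eqref{E:Vprime} I would exploit that the potential term $\mathcal{V}(u_1,u_2)u_j$ in $F_j$ couples the equations purely multiplicatively: the combination $aF_1+bF_2$ contributes the potential piece $\mathcal{V}(u_1,u_2)(au_1+bu_2)=\mathcal{V}(u_1,u_2)v_1$, and similarly $cF_1+dF_2$ contributes $\mathcal{V}(u_1,u_2)v_2$. Hence the same scalar real quadratic form $\mathcal{V}$ serves as the potential of the $v$-system, and re-expressing it in $v$-coordinates via the first-paragraph rule produces coefficient vector $(\det\mathscr{M})^{-1}\mathscr{D}(\mathscr{M})\mathscr{V}$, giving \eqref{E:Vprime}. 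The residual check, that after removing the potential piece the remainder of $\mathscr{M} F(\mathscr{M}^{-1}v)$ matches the template \eqref{E:CVNLS} with the matrix $\mathscr{A}'$ found above and no spurious extra potential terms, is automatic once one observes that the assignment $(\mathscr{A},\mathscr{V})\mapsto(F_1,F_2)$ defined by \eqref{E:CVNLS} is a linear injection (the $12$ real coefficients of \eqref{eq:2NLS} match the $9+3$ parameters on the left).

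Finally, the eigenvalue and cone statements are immediate. If $\mathscr{A}\vec{p}=\lambda\vec{p}$, left-multiplying by $(\det\mathscr{M})^{-1}\mathscr{D}(\mathscr{M})$ and inserting $\mathscr{D}(\mathscr{M})^{-1}\mathscr{D}(\mathscr{M})$ on the left of $\vec{p}$ gives $\mathscr{A}'(\mathscr{D}(\mathscr{M})\vec{p})=(\det\mathscr{M})^{-1}\lambda\,\mathscr{D}(\mathscr{M})\vec{p}$. For the $\mathcal{P}_+$ claim, $\vec{p}\mapsto(\det\mathscr{M})^{-1}\mathscr{D}(\mathscr{M})\vec{p}$ corresponds via $A(\cdot)$ to $A(\vec{p})\mapsto(\mathscr{M}^{-1})^{T}A(\vec{p})\mathscr{M}^{-1}$, whose determinant equals $(\det\mathscr{M})^{-2}(a_1a_3-a_2^2)$ and hence has the same sign as $a_1a_3-a_2^2$; since the condition $\vec{p}\in\mathcal{P}_+$ is invariant under nonzero scalar multiplication, $\vec{p}\in\mathcal{P}_+$ if and only if $\mathscr{D}(\mathscr{M})\vec{p}\in\mathcal{P}_+$. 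The step I expect to be the most delicate is the bookkeeping in the third paragraph: one must verify that no nonlinear monomial gets reassigned between the "$\mathscr{A}$-part" and the "potential part" under the linear change of unknowns, which is precisely where the injectivity of the parametrization \eqref{E:CVNLS} becomes indispensable.
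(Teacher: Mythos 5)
Note first that the paper does not prove this proposition at all: it is quoted from \cite{MaSeUr2}, so your argument can only be measured against what a complete proof requires. Your treatment of \eqref{E:Aprime} is a nice structural alternative to a brute-force coefficient computation: the congruence computation identifying the coefficient vector of $\mathcal{Q}$ in the new variables with $(\det\mathscr{M})^{-1}\mathscr{D}(\mathscr{M})\vec{a}$ is correct, as are $\widetilde{\mathcal{I}}=(\det\mathscr{M})\mathcal{I}$, the eigenvector claim, and the $\mathcal{P}_+$ claim (via $\det$ of $(\mathscr{M}^{-1})^{T}A(\vec p)\mathscr{M}^{-1}$ and scaling invariance of $\mathcal{P}_+$). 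One small omission in the \eqref{E:Aprime} step: ``matching with Proposition \ref{prop:ode} in the $v$-coordinates'' only gives $\widetilde{\mathcal{I}}\,\mathcal{Q}_v\bigl((\mathscr{A}'-\mathscr{C})\vec a'\bigr)=0$ along solutions, with $\mathscr{C}=(\det\mathscr{M})^{-1}\mathscr{D}(\mathscr{M})\mathscr{A}\mathscr{D}(\mathscr{M})^{-1}$; to conclude $\mathscr{A}'=\mathscr{C}$ you must still invoke that the initial data are arbitrary, that $\rho_1,\mathcal{R},\rho_2$ are linearly independent as functions of $(v_1,v_2)$, and continuity across the set $\widetilde{\mathcal{I}}=0$. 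This is easily filled in.

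The genuine gap is in \eqref{E:Vprime}. Your decomposition $F_j=G_j+\mathcal{V}(u)u_j$ correctly shows that the transformed nonlinearity equals (transformed $G$-part) plus a potential term with coefficient vector $(\det\mathscr{M})^{-1}\mathscr{D}(\mathscr{M})\mathscr{V}$, and by linearity and uniqueness of the parametrization the new vector part is $\mathscr{V}'=\mathscr{V}'_G+(\det\mathscr{M})^{-1}\mathscr{D}(\mathscr{M})\mathscr{V}$, where $\mathscr{V}'_G$ is the vector part of the transformed potential-free system. Your claim that $\mathscr{V}'_G=0$ is ``automatic'' from the linear injectivity of \eqref{E:CVNLS} does not follow: injectivity only says the representation of the $v$-system is unique, it says nothing about whether the induced action of $\mathscr{M}$ on the $12$-dimensional coefficient space preserves the $9$-dimensional subspace $\{\mathscr{V}=0\}$. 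That invariance is exactly the $\mathscr{V}=0$ case of \eqref{E:Vprime}, so as written the argument is circular at this point. Moreover it cannot be extracted from the tool you used for \eqref{E:Aprime}, because the evolution of the quadratic quantities is blind to $\mathscr{V}$ (a common real phase rotation leaves $\rho_1,\rho_2,\mathcal{R},\mathcal{I}$ unchanged), so Proposition \ref{prop:ode} determines only the matrix part. To close the gap you must either compute the twelve coefficients of $\mathscr{M}F(\mathscr{M}^{-1}v)$ and check the combinations defining $\mathscr{V}'$ (e.g.\ $c_8'-2c_9'$, $c_4'-2c_5'$, etc.) directly, or produce an intrinsic characterization of $\mathscr{V}$ (for instance through the phase evolution $\Im(\overline{\phi_j}\partial_t\phi_j)$, or through the quartic-form/Hamiltonian structure behind \eqref{E:CVNLS}) that transforms covariantly under $\mathscr{M}$.
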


\begin{proof}[Proof of Theorem \ref{thm:standard1}]
We consider ODE system \eqref{eq:ODE} instead of \eqref{eq:2NLS}.
It is sufficient because the change of parameters of the nonlinearity does not depend on the linear part of the equation.

Pick an ODE system \eqref{eq:ODE} satisfying Assumption \ref{A1} and let
$\mathscr{A}^{(0)}$ be the matrix part of the system.
Let $(u_1^{(0)},u_2^{(0)})$ be the unknown for the system. 
Let $\lambda_1^{(0)}$ and $\lambda_2^{(0)}$ be two eigenvalues such that
$W(\lambda_j^{(0)};\mathscr{A}^{(0)}) \cap \mathcal{P}_+ \neq \emptyset$.
We apply several changes of variables to make the system is of the desired standard form.

{\bf Case 1}.
We first consider the case where $\mathscr{A}^{(0)}$ has three distinct eigenvalues.
Let $\lambda_0^{(0)}$ be the third eigenvalue.
By changing $(u_1^{(0)},u_2^{(0)})\mapsto (u_1^{(0)},-u_2^{(0)})$ if necessary and
 by relabeling the eigenvalues if necessary, we may suppose  without loss of generality that either
\begin{itemize}
\item[(a)] $\lambda_0^{(0)}>0$ and $\lambda_1^{(0)}>\lambda_2^{(0)}$;
\item[(b)] $\lambda_0^{(0)}=0$, $\lambda_2^{(0)}<0$, and $\lambda_2^{(0)}<\lambda_1^{(0)}\le |\lambda_2^{(0)}| $.
\end{itemize}
(Note that the former manipulation changes the sign of all the eigenvalues.)

{\bf Subcase 1-(a)}.
We first consider the subcase (a).
Pick
\[
	\vec{p} = {}^t\!( p_{1} ,
	p_{2} , p_{3})
\in W(\lambda_1^{(0)},\mathscr{A}^{(0)}) \cap \mathcal{P}_+
\]
with $p_{1}>0$ and
 introduce a new variable $(u_1^{(1)},u_2^{(1)})$ by
\[
	\begin{bmatrix}
	u_1^{(1)}\\
	u_2^{(1)}
	\end{bmatrix}
	=
	\mathscr{M}^{(1)}
	\begin{bmatrix}
	u_1^{(0)}\\
	u_2^{(0)}
	\end{bmatrix}, \quad
	\mathscr{M}^{(1)}=\begin{bmatrix}
	p_{1}^{1/2} & p_{1}^{-1/2}p_{2} \\
	0 & (\tfrac{p_{1}p_{3}-p_{2}^2}{p_{1}})^{1/2}
	\end{bmatrix}.
\]
Let $\mathscr{A}^{(1)}$ be the matrix corresponding to the system 
for $(u_1^{(1)},u_2^{(1)})$. 
A computation shows 
$\mathscr{D}(\mathscr{M}^{(1)}) \vec{p}=
\sqrt{p_1p_3-p_2^2}\ {}^t\!(1,0,1)$, which is a rephrase of the square completion
\[
	p_{1}|u_1^{(0)}|^2 + 2 p_{2} \Re (\overline{u_1^{(0)}}u_2^{(0)}) + p_{3} |u_2^{(0)}|^2 =
	|u_1^{(1)}|^2 + |u_2^{(1)} |^2.
\]
One sees from Proposition \ref{P:change} that 
${}^t\!(1,0,1) \in W(\lambda_1^{(1)};\mathscr{A}^{(1)})$,
where
$\lambda_1^{(1)}:=(\det \mathscr{M}^{(1)})^{-1} \lambda_1^{(0)}$.
Note that $\lambda_2^{(1)}:=(\det \mathscr{M}^{(1)})^{-1} \lambda_2^{(0)}$ satisfies $W(\lambda_2^{(1)};\mathscr{A}^{(1)}) \cap \mathcal{P}_+ \neq \emptyset$.
The third eigenvalue is $\lambda_0^{(1)}:=(\det \mathscr{M}^{(1)})^{-1} \lambda_0^{(0)}>0$.

We next adjust the magnitude of the matrix so that the third eigenvalue becomes $1$.
Define $(u_1^{(2)},u_2^{(2)})$ by
$u_j^{(2)}=(\lambda_0^{(1)})^{\frac12}u_j^{(1)}$.
Let $\mathscr{A}^{(2)}$ be the matrix corresponding to the system for $(u_1^{(2)},u_2^{(2)})$. 
We remark that the positive and negative eigenvalues are 
changed into $\lambda_1:=\lambda_1^{(1)}(\lambda_0^{(1)})^{-1}\not\in \{0,1\}$ and $\lambda_2:=\lambda_2^{(1)}(\lambda_0^{(1)})^{-1}\not\in \{0,1\}$, respectively.
Then, one has $\lambda_1>\lambda_2$ and 
\[
	{}^t\!(1,0,1) \in W(\lambda_1;\mathscr{A}^{(2)}), \quad
	W(\lambda_2;\mathscr{A}^{(2)}) \cap \mathcal{P}_+ \neq \emptyset, \quad
	W(1;\mathscr{A}^{(2)}) \neq \{0\}
\]
as desired.

We then apply further change of variable to simplify the eigenspace associated with eigenvalue $\lambda_2$.
Pick
\[
	\vec{q} = {}^t\!( q_{1} ,
	q_{2}, q_{3})
\in W(\lambda_2;\mathscr{A}^{(2)}) \cap \mathcal{P}_+.
\]
Define $(u_1^{(3)},u_2^{(3)})$ by
\[
	\begin{bmatrix}
	u_1^{(3)}\\
	u_2^{(3)}
	\end{bmatrix}
	=
	\mathscr{M}^{(3)}
	\begin{bmatrix}
	u_1^{(2)}\\
	u_2^{(2)}
	\end{bmatrix}, \quad
	\mathscr{M}^{(3)}=\begin{bmatrix}
	\cos \theta & -\sin \theta \\
	\sin \theta & \cos \theta
	\end{bmatrix}
\]
for $\theta \in \R$ and 
let $\mathscr{A}^{(3)}$ be the matrix corresponding to the system for $(u_1^{(3)},u_2^{(3)})$. 
This does not change eigenvalues since $\det \mathscr{M}^{(3)}=1$.
One has $\mathscr{D}(\mathscr{M}^{(3)}){}^t\!(1,0,1)={}^t\!(1,0,1)$
 for any $\theta$, which implies
${}^t\!(1,0,1) \in W(\lambda_1;\mathscr{A}^{(3)})$ for any $\theta$. 
We choose $\theta$ so that
$\mathscr{D}(\mathscr{M}^{(3)})\vec{q}$ is of the form ${}^t\!(\alpha,0,\beta)$.
If $q_2=0$, we let $\theta=0$.
Otherwise, since the second component of the vector $\mathscr{D}(\mathscr{M}^{(3)})\vec{q}$ is explicitly given as
\[
	\tfrac{q_3-q_1}{2} \sin 2\theta + q_2 \cos 2 \theta ,
\]
we choose $\theta$ so that $\cot 2\theta = \frac{q_1-q_3}{2q_2}$.
Then, one sees that 
\[
	W(\lambda_2 ;\mathscr{A}^{(3)}) =\Span
	\{ {}^t\!(1,0,k) \}
\]
for some positive $k \neq 1$.
Note that $k$ is positive since
$W(\lambda_2;\mathscr{A}^{(3)}) \cap \mathcal{P}_+ \neq \emptyset$ and that $k\neq1$ follows from $W(\lambda_1;\mathscr{A}^{(3)}) \cap W(\lambda_2;\mathscr{A}^{(3)}) = \{0\}$.

If $k>1$ then we further apply
\[
	\begin{bmatrix}
	u_1^{(4)}\\
	u_2^{(4)}
	\end{bmatrix}
	=
	\mathscr{M}^{(4)}
	\begin{bmatrix}
	u_1^{(3)}\\
	u_2^{(3)}
	\end{bmatrix}, \quad
	\mathscr{M}^{(4)}=\begin{bmatrix}
	0 & 1 \\
	-1 & 0
	\end{bmatrix}.
\]
Let $\mathscr{A}^{(4)}$ be the matrix corresponding the system for $(u_1^{(4)},u_2^{(4)})$. 
Since $\det \mathscr{M}^{(4)}=1$, the eigenvalues do not change. 
Further, since
\[
	\mathcal{D}(\mathscr{M}^{(4)})
	=\begin{bmatrix}
	0 & 0 & 1 \\
	0 & -1 & 0 \\
	1 & 0 & 0
	\end{bmatrix},
\]
we see from Proposition \ref{P:change}
that $W(\lambda_1;\mathscr{A}^{(4)}) = \Span \{{}^t\!(1,0,1) \}$
and
\[
	W(\lambda_2;\mathscr{A}^{(4)}) = \Span \{{}^t\!(k,0,1) \}
	= \Span \{{}^t\!(1,0,k^{-1}) \}.
\]
If $0<k<1$ then we 
simply let $\mathscr{A}^{(4)}=\mathscr{A}^{(3)}$.
In either case, one has
\[
	W(1; \mathscr{A}^{(4)})
	= \Span \{{}^t\! (\eta_2,1,\eta_3)\}
\]
with $\eta_2,\eta_3\in\R$. Note that the second component of the eigenvector does not vanish since otherwise we have $W(\lambda_1;\mathscr{A}^{(4)})\oplus W(\lambda_2;\mathscr{A}^{(4)})\oplus W(1;\mathscr{A}^{(4)})  \subsetneq \R^3$, a contradiction.

Thus, by slightly changing the notation,
we have
\begin{align*}
	W(\lambda_1;\mathscr{A}^{(4)}) 
	&= \Span \{{}^t\!(1,0,1) \}, \\
	W(\lambda_2;\mathscr{A}^{(4)}) 
	&= \Span \{{}^t\!(1+\eta_1,0,\eta_1) \}, \\
	W(1;\mathscr{A}^{(4)}) 
	&= \Span \{{}^t\!(\eta_2,1,\eta_3) \}
\end{align*}
with $\lambda_1,\lambda_2 \in \R\setminus\{0,1\}
$, $\lambda_1>\lambda_2$,
$\eta_1>0$, and $\eta_2,\eta_3\in \R$.
Hence,
\begin{align*}
	\mathscr{A}^{(4)}
	={}& \begin{bmatrix}
	1 & 1+\eta_1 & \eta_2 \\
	0 & 0& 1\\
	1 & \eta_1& \eta_3
	\end{bmatrix}
	\begin{bmatrix}
	\lambda_1 & 0 & 0 \\
	0 & \lambda_2 & 0\\
	0 & 0 & 1
	\end{bmatrix}
	\begin{bmatrix}
	1 & 1+\eta_1 & \eta_2 \\
	0 & 0& 1\\
	1 & \eta_1& \eta_3
	\end{bmatrix}^{-1} \\
	={}& \begin{bmatrix}
	\lambda_1 - (1+\eta_1)(\lambda_1-\lambda_2) & 
	\eta_2(1-\lambda_1) +(1+\eta_1)(\eta_2-\eta_3)(\lambda_1-\lambda_2) 
	& (1+\eta_1)(\lambda_1-\lambda_2) \\
	0 & 1& 0\\
	-\eta_1(\lambda_1-\lambda_2) & \eta_3(1- \lambda_1) +\eta_1(\eta_2-\eta_3)(\lambda_1-\lambda_2)& \lambda_1 +\eta_1(\lambda_ 1- \lambda_2)
	\end{bmatrix},
\end{align*}
which is the desired standard form.

{\bf Subcase 1-(b)}.
We then move onto the subcase (b).
Define $\mathscr{A}^{(1)}$ as in the subcase (a).
We modified the definition of $\mathscr{A}^{(2)}$. Set the magnitude of the matrix so that $\lambda_2$ becomes $-1$.
Define $(u_1^{(2)},u_2^{(2)})$ by
$u_j^{(2)}=|\lambda_2^{(1)}|^{\frac12}u_j^{(1)}$.
Let $\mathscr{A}^{(2)}$ be the matrix corresponding to the system for $(u_1^{(2)},u_2^{(2)})$. 
Set $\lambda_1:= \lambda_1^{(1)}|\lambda_2^{(1)}|^{-1} \in (-1,0)\cup(0,1]$.
One has
\[
	{}^t\!(1,0,1) \in W(\lambda_1;\mathscr{A}^{(2)}), \quad
	W(-1;\mathscr{A}^{(2)}) \cap \mathcal{P}_+ \neq \emptyset, \quad
	\rank \mathscr{A}^{(2)}=2.
\]
The rest is the same as in subcase (a).
By defining $\mathscr{A}^{(3)}$ and $\mathscr{A}^{(4)}$ via exactly the same way,
one obtains
\begin{align*}
	W(\lambda_1;\mathscr{A}^{(4)}) 
	&= \Span \{{}^t\!(1,0,1) \}, \\
	W(-1;\mathscr{A}^{(4)}) 
	&= \Span \{{}^t\!(1+\eta_1,0,\eta_1) \}, \\
	W(0;\mathscr{A}^{(4)}) 
	&= \Span \{{}^t\!(\eta_2,1,\eta_3) \}
\end{align*}
with $\lambda_1\in (-1,0)\cup(0,1]$,
$\eta_1>0$, and $\eta_2,\eta_3\in \R$.
Hence,
\begin{align*}
	\mathscr{A}^{(4)}
	={}& \begin{bmatrix}
	1 & 1+\eta_1 & \eta_2 \\
	0 & 0& 1\\
	1 & \eta_1& \eta_3
	\end{bmatrix}
	\begin{bmatrix}
	\lambda_1 & 0 & 0 \\
	0 & -1 & 0\\
	0 & 0 & 0
	\end{bmatrix}
	\begin{bmatrix}
	1 & 1+\eta_1 & \eta_2 \\
	0 & 0& 1\\
	1 & \eta_1& \eta_3
	\end{bmatrix}^{-1} \\
	={}& \begin{bmatrix}
	\lambda_1 - (1+\eta_1)(\lambda_1+1) & 
	-\eta_2\lambda_1 +(1+\eta_1)(\eta_2-\eta_3)(\lambda_1+1) 
	& (1+\eta_1)(\lambda_1+1) \\
	0 & 0 & 0\\
	-\eta_1(\lambda_1+1) & -\eta_3 \lambda_1 +\eta_1(\eta_2-\eta_3)(\lambda_1+1)& \lambda_1 +\eta_1(\lambda_ 1+1)
	\end{bmatrix},
\end{align*}
which is of the desired form.

{\bf Case 2}.
Let us consider the exceptional case, i.e.,
$\lambda_0^{(0)}$ is equal to
$\lambda_1^{(0)}$ or $\lambda_2^{(0)}$.
By changing $(u_1^{(0)},u_2^{(0)})\mapsto (u_1^{(0)},-u_2^{(0)})$ if necessary and
 by relabeling the eigenvalues if necessary, we may suppose  without loss of generality that 
$\mathscr{A}^{(0)}$ has exactly two distinct eigenvalues $\lambda_0^{(0)}>0$ and $\lambda^{(0)}\not \in \{ 0, \lambda_0^{(0)}\}$. By assumption, we have
$W(\lambda_0^{(0)}) \cap \mathcal{P}_+\neq \emptyset$ and $W(\lambda^{(0)}) \cap \mathcal{P}_+\neq \emptyset$.
There exist two subcases;
\begin{itemize}
\item[(a)] $\dim W(\lambda_0^{(0)}) =2$;
\item[(b)] $\dim W(\lambda_0^{(0)}) =1$, \end{itemize}
%
In the both cases, the procedure in Subcase 1-(a) works.
The only difference is that one of $\lambda_1$ and $\lambda_2$ is equal to $\lambda_0$.
Further, in the Subcase 2-(b),
$\mathscr{A}^{(0)}$ is not diagonalized and
the Jordan block of size two appears.
One has
\begin{align*}
	W(1;\mathscr{A}^{(4)}) 
	&\supset \Span \{{}^t\!(1,0,1) \}, \\
	\tilde{W}(1;\mathscr{A}^{(4)}) 
	&= \Span \{{}^t\!(1,0,1), {}^t\!(\eta_2,1,\eta_3) \}, \\
	W(\lambda;\mathscr{A}^{(4)}) 
	&= \Span \{{}^t\!(1+\eta_1,0,\eta_1) \}
\end{align*}
with $\lambda \in \R\setminus\{0,1\}
$, $\eta_1>0$, and $\eta_2,\eta_3\in \R$.
Hence, 
\begin{align*}
	\mathscr{A}^{(4)}
	&= \begin{bmatrix}
	1 & 1+\eta_1 & \eta_2 \\
	0 & 0& 1\\
	1 & \eta_1& \eta_3
	\end{bmatrix}
	\begin{bmatrix}
	1 & 0 & \eta_4 \\
	0 & \lambda & 0\\
	0 & 0 & 1
	\end{bmatrix}
	\begin{bmatrix}
	1 & 1+\eta_1 & \eta_2 \\
	0 & 0& 1\\
	1 & \eta_1& \eta_3
	\end{bmatrix}^{-1}\\
	&=\begin{bmatrix}
	1 - (1+\eta_1)(1-\lambda) & 
	 (1+ \eta_1)(\eta_2 - \eta_3)(1-\lambda) + \eta_4
	& (1+\eta_1)(1-\lambda) \\
	0 & 1 & 0\\
	-\eta_1(1-\lambda) & \eta_1(\eta_2- \eta_3)(1-\lambda)+\eta_4 & 1+\eta_1(1-\lambda)
	\end{bmatrix}
\end{align*}
with $\lambda \not \in\{0,1\}$, 
$\eta_1>0$, and $\eta_2,\eta_3,\eta_4\in \R$.
Note that $\eta_4=0$ and $\eta_4\neq0$ correspond to the Subcases 2-(a) and 2-(b), respectively.
This is the desired standard forms.
%
\end{proof}

\begin{proof}[Proof of Theorem \ref{thm:standard2}]
Pick an ODE system \eqref{eq:ODE} satisfying Assumption \ref{A2} and let
$\mathscr{A}^{(0)}$ be the matrix part of the system.
Let $(u_1^{(0)},u_2^{(0)})$ be the unknown for the system. 
Let $\lambda_j^{(0)}$ be the three distinct eigenvalues such that
${\lambda_1^{(0)}}/{\lambda_3^{(0)}}<
{\lambda_2^{(0)}}/{\lambda_3^{(0)}}<1$,
$(W(\lambda_1^{(0)};\mathscr{A}^{(0)}) \oplus W(\lambda_2^{(0)};\mathscr{A}^{(0)})) \cap \mathcal{P}_+ \neq \emptyset$,
$W(\lambda_3^{(0)};\mathscr{A}^{(0)} )\cap \mathcal{P}_+ \neq \emptyset$,
and $W(\lambda_1^{(0)};\mathscr{A}^{(0)})\cap \mathcal{P}_+=W(\lambda_2^{(0)};\mathscr{A}^{(0)})\cap \mathcal{P}_+=\emptyset$.
By changing $(u_1^{(0)},u_2^{(0)})\mapsto (u_1^{(0)},-u_2^{(0)})$ if necessary, we may suppose that $\lambda_3^{(0)}<0$ without loss of generality.
One has ${\lambda_1^{(0)}}>
{\lambda_2^{(0)}}>\lambda_3^{(0)}$.
There are four subcases:
\begin{itemize}
\item[(a)] $W(\lambda_1^{(0)};\mathscr{A}^{(0)}) \subset  \mathcal{P}_0$ and $W(\lambda_2^{(0)};\mathscr{A}^{(0)}) \subset  \mathcal{P}_0$;
\item[(b)] $W(\lambda_1^{(0)};\mathscr{A}^{(0)}) \subset \mathcal{P}_0$ and $W(\lambda_2^{(0)};\mathscr{A}^{(0)}) \cap \mathcal{P}_0 = \{0\}$;
\item[(c)] $W(\lambda_1^{(0)};\mathscr{A}^{(0)}) \cap \mathcal{P}_0 = \{0\}$ and
$W(\lambda_2^{(0)};\mathscr{A}^{(0)}) \subset \mathcal{P}_0$;
\item[(d)]
$W(\lambda_1^{(0)};\mathscr{A}^{(0)}) \cap \mathcal{P}_0 = \{0\}$ and $W(\lambda_2^{(0)};\mathscr{A}^{(0)}) \cap \mathcal{P}_0 = \{0\}$.
\end{itemize}

{\bf Subcase (a)}. 
By assumption, one can choose $\theta_1 ,\theta_2 \in \R/2\pi \Z$ with $\theta_1\neq \theta_2$ such that
\[
	\vec{p}_j = 
	{}^t\!(
	 1+ \sin \theta_j,
	 \cos \theta_j,
	  1- \sin \theta_j )\in W(\lambda_j^{(0)};\mathscr{A}^{(0)}) \]
for $j=1,2$. With this $\theta_j$ and $\sigma_0\in \{ \pm1\}$,
we introduce
\[
	\begin{bmatrix}
	u_1^{(1)}\\
	u_2^{(1)}
	\end{bmatrix}
	=
	\mathscr{M}^{(1)}
	\begin{bmatrix}
	u_1^{(0)}\\
	u_2^{(0)}
	\end{bmatrix}, \quad
	\mathscr{M}^{(1)}=\begin{bmatrix}
	\sigma_0\sqrt{1+\sin \theta_1} & \sigma_0 \frac{\cos \theta_1}{\sqrt{1+\sin \theta_1}} \\
	\sqrt{1+\sin \theta_2} &  \frac{\cos \theta_2}{\sqrt{1+\sin \theta_2}}
	\end{bmatrix}
\]
if $\theta_1,\theta_2\neq3\pi/2$.
If $\theta_1=3\pi/2 \neq \theta_2$
or $\theta_1 \neq 3\pi/2 = \theta_2$, we define 
\[
	\begin{bmatrix}
0 & -\sqrt2 \\
	\sqrt{1+\sin \theta_2} &  \frac{\cos \theta_2}{\sqrt{1+\sin \theta_2}}
	\end{bmatrix}\quad \text{or}\quad
	\begin{bmatrix}
	\sqrt{1+\sin \theta_1} & \frac{\cos \theta_1}{\sqrt{1+\sin \theta_1}} \\
	0 & \sqrt{2}
	\end{bmatrix},
\]
respectively. We choose $\sigma_0 \in \{\pm1\}$ so that $\det \mathscr{M}^{(1)}>0$.
Let $\mathscr{A}^{(1)}$ be the matrix corresponds to the system for $(u_1^{(1)},u_2^{(1)})$. One sees by a direct computation that
\[
	\mathscr{D}(\mathscr{M}^{(1)}) \vec{p}_1 = {}^t\!(c,0,0), \quad
	\mathscr{D}(\mathscr{M}^{(1)}) \vec{p}_2 = {}^t\!(0,0,c)
\]
with the constant $c=(\det\mathscr{M}^{(1)})^{-1} 2(1-\cos (\theta_1-\theta_2)) \neq0$.
Hence,
\[
	W(\lambda_1^{(1)};\mathscr{A}^{(1)}) = \Span\{{}^t\!(1,0,0)\}, \quad
	W(\lambda_2^{(1)};\mathscr{A}^{(1)}) = \Span\{{}^t\!(0,0,1)\},
\]
where $\lambda_j^{(1)}:=(\det \mathscr{M}^{(1)})^{-1} \lambda_j^{(0)}$.
One has
\[
	W(\lambda_3^{(1)};\mathscr{A}^{(1)}) = \Span\{{}^t\!(\eta_2,1,\eta_3)\}
\]
for some $\eta_2,\eta_3\in\R$ with
$\eta_2\eta_3>1$. The condition comes from $(\eta_2,1,\eta_3) \in \mathcal{P}_+$.

We next apply
\[
	\begin{bmatrix}
	u_1^{(2)}\\
	u_2^{(2)}
	\end{bmatrix}
	=
	\mathscr{M}^{(2)}
	\begin{bmatrix}
	u_1^{(1)}\\
	u_2^{(1)}
	\end{bmatrix}, \quad
	\mathscr{M}^{(2)}=\begin{bmatrix}
	(\eta_2/\eta_3)^{1/4} |\lambda_3^{(1)}|^{1/2} & 0 \\
	0 & (\eta_2/\eta_3)^{-1/4} |\lambda_3^{(1)}|^{1/2}
	\end{bmatrix}.
\]
Let $\mathscr{A}^{(2)}$ be the matrix corresponds to the system for $(u_1^{(2)},u_2^{(2)})$. 
Let $\lambda_j:= \lambda_j^{(1)}
|\lambda_3^{(1)}|^{-1}$ for $j=1,2$.
One has $\lambda_1>\lambda_2>\lambda_3=-1$ and
\[
	W(\lambda_1;\mathscr{A}^{(2)}) = \Span\{{}^t\!(1,0,0)\}, \quad
	W(\lambda_2;\mathscr{A}^{(2)}) = \Span\{{}^t\!(0,0,1)\}, \quad
	W(-1;\mathscr{A}^{(2)}) = \Span\{{}^t\!(\eta,1,\eta)\}
\]
with a number $\eta\in (-\infty,-1)\cup (1,\infty)$. 
Thus,
\begin{align*}
	\mathscr{A}^{(2)}
	={}& \begin{bmatrix}
	1 & 0 & \eta \\
	0 & 0& 1\\
	0 & 1& \eta
	\end{bmatrix}
	\begin{bmatrix}
	\lambda_1 & 0 & 0 \\
	0 & \lambda_2 & 0\\
	0 & 0 & -1
	\end{bmatrix}
	\begin{bmatrix}
	1 & 0 & \eta \\
	0 & 0& 1\\
	0 & 1& \eta
	\end{bmatrix}^{-1} 
	= \begin{bmatrix}
	\lambda_1 & 
	-\eta(\lambda_1+1) 
	& 0 \\
	0 & -1& 0\\
	0 & -\eta(\lambda_2+1)  & \lambda_2
	\end{bmatrix}.
\end{align*}
This is of the desired form.

{\bf Subcases (b) and (c)}. 
Let us consider Subcase (b). 
We generalize the assumption $\lambda_1^{(0)}>\lambda_2^{(0)}$ into $\lambda_1^{(0)}\neq \lambda_2^{(0)}$ in this subcase
so that Subcase (c) is handled in a unified way.
As in Subcase (a), one can find $\theta_2 \in \R /2\pi \Z$ such that
\[
	\vec{p}_2 = {}^t\!(
	 1+ \sin \theta_2,
	  \cos \theta_2, 
	  1- \sin \theta_2 )\in W(\lambda_2^{(0)};\mathscr{A}^{(0)}) \]
With this $\theta_2$, we introduce
\[
	\begin{bmatrix}
	u_1^{(1)}\\
	u_2^{(1)}
	\end{bmatrix}
	=
	\mathscr{M}^{(1)}
	\begin{bmatrix}
	u_1^{(0)}\\
	u_2^{(0)}
	\end{bmatrix}, \quad
	\mathscr{M}^{(1)}=\begin{bmatrix}
	\sqrt{1+\sin \theta_2} &  \frac{\cos \theta_2}{\sqrt{1+\sin \theta_2}} \\
	0 & 1/\sqrt{1+\sin\theta_2}
	\end{bmatrix}\in SL_2(\R)
\]
if $\theta_2\neq3\pi/2$.
If $\theta_2=3\pi/2$, we define 
\[
	\mathscr{M}^{(1)}=\begin{bmatrix}
0 & \sqrt2 \\
	-1/\sqrt2 & 0
	\end{bmatrix}\in SL_2(\R).
\]
Let $\mathcal{A}^{(1)}$ be the new matrix part.
One sees that the eigenvalues do not change and
\[
	W(\lambda_2^{(0)};\mathcal{A}^{(1)})
	= \Span \{ {}^t\!(1,0,0) \}.
\]
Note that
\[
	W(\lambda_1^{(0)};\mathcal{A}^{(1)})
	= \Span \{ {}^t\!(a,b,1) \}
\]
with some numbers $a,b \in \R$ such that $a<b^2$.
Note that the third component of a nonzero element of $W(\lambda_1^{(0)};\mathcal{A}^{(1)})$ do not vanish since otherwise 
$(W(\lambda_1^{(0)};\mathcal{A}^{(1)}) \oplus W(\lambda_2^{(0)};\mathcal{A}^{(1)})) \cap \mathcal{P}_+ = \emptyset$, a contradiction.

We next apply the following change of variables:
\[
	\begin{bmatrix}
	u_1^{(2)}\\
	u_2^{(2)}
	\end{bmatrix}
	=
	\mathscr{M}^{(2)}
	\begin{bmatrix}
	u_1^{(1)}\\
	u_2^{(1)}
	\end{bmatrix}, \quad
	\mathscr{M}^{(2)}=|\lambda_3|^{\frac12}(1+b)^{-\frac12}\begin{bmatrix}
	1& 0  \\
	\sqrt{b^2-a} & 1+b
	\end{bmatrix}.
\]
Let $\mathscr{A}^{(2)}$ be the new matrix.
Then, since $\det \mathscr{M}^{(2)}=|\lambda_3|>0$, one has 
\[
	W(\lambda_1;\mathscr{A}^{(2)})
	= \Span\{{}^t\!(1,0,-1)\}, \quad
	W(\lambda_2;\mathscr{A}^{(2)})
	= \Span\{{}^t\!(1,0,0)\}, \quad
	W(-1;\mathscr{A}^{(2)})\cap \mathcal{P}_+ \neq \emptyset,
\]
where $\lambda_j:= \lambda_j^{(0)}
|\lambda_3^{(0)}|^{-1}$ for $j=1,2$.
Then, $\lambda_1>-1$ and $\lambda_2>-1$.
Thus, we obtain
\begin{align*}
	\mathscr{A}^{(2)}
	={}& \begin{bmatrix}
	1 & 1 & \eta_2 \\
	0 & 0& 1\\
	-1 & 0& \eta_3
	\end{bmatrix}
	\begin{bmatrix}
	\lambda_1 & 0 & 0 \\
	0 & \lambda_2 & 0\\
	0 & 0 & -1
	\end{bmatrix}
	\begin{bmatrix}
	1 & 1 & \eta_2 \\
	0 & 0& 1\\
	-1 & 0& \eta_3
	\end{bmatrix}^{-1} \\
	={}& \begin{bmatrix}
	\lambda_2 & 
	-\eta_2(\lambda_2+1)  + \eta_3 (\lambda_1-\lambda_2)
	& -(\lambda_1-\lambda_2) \\
	0 & -1& 0\\
	0 & -\eta_3(\lambda_1+1)  & \lambda_1
	\end{bmatrix}
\end{align*}
with $\eta_2\eta_3>1$.
This is the special case $\eta_1=0$ of $\mathscr{A}_{2,2}$.

{\bf Subcase (d)}. 
One writes
\[
	W(\lambda_1^{(0)};\mathscr{A}^{(0)}) = \Span \{ {}^t\!(p_1,p_2,p_3)\}
\]
with $p_1p_3<p_2^2$.
We introduce
\[
	\begin{bmatrix}
	u_1^{(1)}\\
	u_2^{(1)}
	\end{bmatrix}
	=
	\mathscr{M}^{(1)}
	\begin{bmatrix}
	u_1^{(0)}\\
	u_2^{(0)}
	\end{bmatrix}
\]
with
\[
	\mathscr{M}^{(1)}=\begin{bmatrix}
	\sigma_0 p_1 & \sigma_0 p_2  \\
	0 & (p_2^2-p_1p_3)^{1/2}
	\end{bmatrix}
\]
if $p_1\neq 0$,
\[
	\mathscr{M}^{(1)}=\begin{bmatrix}
	\sigma_0 p_2 & 0 \\
	p_2  & p_3
	\end{bmatrix}
\]
if $p_1=0$ and $p_3\neq 0$, and
\[
	\mathscr{M}^{(1)}=\begin{bmatrix}
	1 & 1 \\
	-1 & 1
	\end{bmatrix}
\]
if $p_1= p_3 = 0$, where $\sigma_0\in \{\pm1\}$ is chosen so that $\det \mathscr{M}^{(1)}>0$.
Let $\mathcal{A}^{(1)}$ be the new matrix part and let $\lambda_j^{(1)}:= (\det \mathscr{M}^{(1)})\lambda_j^{(0)}$ be the eigenvalues of the new matrix.
Then, one sees that
\[
	W(\lambda_1^{(1)};\mathscr{A}^{(1)}) = \Span\{(1,0,-1)\}.
\]

Let us next simplify the characteristic space associated with eigenvalue $\lambda_2^{(1)}$.
Pick
\[
	\vec{q} = {}^t\!(q_1,q_2,q_3) \in W(\lambda_2^{(1)};\mathscr{A}^{(1)})
	\setminus\{0\}.
\]
Note that $q_1q_3-q_2^2<0$.
Since $W(\lambda_2^{(1)};\mathscr{A}^{(1)}) \cap \mathcal{P}_+ = \emptyset$
and $(W(\lambda_1^{(1)};\mathscr{A}^{(1)})\oplus W(\lambda_2^{(1)};\mathscr{A}^{(1)})) \cap \mathcal{P}_+ = \emptyset$, one has
$\max_{t\in\R} ((q_3+t)(q_1-t)-q_2^2)>0$,
which reads as $2|q_2|<|q_1+q_3|$.
In particular, $q_1+q_3\neq0$.
We define
\[
	\begin{bmatrix}
	u_1^{(2)}\\
	u_2^{(2)}
	\end{bmatrix}
	=
	\mathscr{M}^{(2)}
	\begin{bmatrix}
	u_1^{(1)}\\
	u_2^{(1)}
	\end{bmatrix}
\]
with
\[
	\mathscr{M}^{(2)}=\begin{bmatrix}
	\cosh \tau & \sinh \tau  \\
	\sinh \tau & \cosh \tau
	\end{bmatrix} \in SL_2(\R), \quad 
	\tau := \frac12 \tanh^{-1} \( \frac{2q_2}{q_1+q_3} \)\in \R.
\]
Let $\mathscr{A}^{(2)}$ be the new matrix.
A computation shows that
$\mathscr{D}(\mathscr{M}^{(2)}){}^t\!(1,0,-1)={}^t\!(1,0,-1)$ and that
the second component of $\mathscr{D}(\mathscr{M}^{(2)})\vec{q}$ vanishes. Hence,
\[
	W(\lambda_2^{(1)};\mathscr{A}^{(2)})=\Span\{{}^t\!(1,0,-1)\}, \quad
	W(\lambda_2^{(1)};\mathscr{A}^{(2)})=\Span \{ {}^t\!(1,0, -\tilde{q}_3) \}
\]
with some $\tilde{q}_3\in (0,1)\cup (1,\infty)$.
We may suppose $\tilde{q}_3\in (0,1)$ by applying $(	u_1^{(2)},	u_2^{(2)})
\mapsto (	u_2^{(2)},	-u_1^{(2)})$ if necessary,
as in Subcase 1-(a) of the proof of Theorem \ref{thm:standard1}.
We define $\eta_1=\tilde{q}_3/(1-\tilde{q}_3)\in(0,\infty)$ so that
$W(\lambda_2^{(1)};\mathscr{A}^{(2)})=\Span \{ {}^t\!(1+\eta_1,0, -\eta_1) \}$.

We finally adjust the magnitude of the matrix by $u_j^{(3)} := |\lambda_3^{(1)}|^{1/2}u_j^{(2)}$ so that the minimum eigenvalue becomes $-1$.
Let $\mathscr{A}^{(3)}$ be the new matrix. Let $\lambda_j:=\lambda_j^{(1)}/|\lambda_3^{(1)}|$ ($j=1,2$) be the positive eigenvalues of $\mathscr{A}^{(3)}$. We have
\begin{align*}
	W(\lambda_1;\mathscr{A}^{(3)})
	&= \Span\{{}^t\!(1,0,-1)\}, \\
	W(\lambda_2;\mathscr{A}^{(3)})
	&= \Span\{{}^t\!(1+\eta_1,0,-\eta_1)\}, \\
	W(-1;\mathscr{A}^{(3)})&=\Span\{{}^t\!(\eta_2,1,\eta_3)\}
\end{align*}
with $\eta_1>0$ and $\eta_2\eta_3>1$.
Thus, we obtain
\begin{align*}
	\mathscr{A}^{(3)}
	={}& \begin{bmatrix}
	1 & 1+\eta_1 & \eta_2 \\
	0 & 0& 1\\
	-1 & -\eta_1& \eta_3
	\end{bmatrix}
	\begin{bmatrix}
	\lambda_1 & 0 & 0 \\
	0 & \lambda_2 & 0\\
	0 & 0 & -1
	\end{bmatrix}
	\begin{bmatrix}
	1 & 1+\eta_1 & \eta_2 \\
	0 & 0& 1\\
	-1 & -\eta_1& \eta_3
	\end{bmatrix}^{-1} \\
	={}& \begin{bmatrix}
	\lambda_1-(1+\eta_1)(\lambda_1-\lambda_2) & 
	(1+\eta_1)(\eta_2+\eta_3)(\lambda_1-\lambda_2) -(\lambda_1+1)\eta_2
	& -(1+\eta_1)(\lambda_1-\lambda_2) \\
	0 & -1& 0\\
	\eta_1(\lambda_1-\lambda_2) & -\eta_1(\eta_2+\eta_3)(\lambda_1-\lambda_2) -(\lambda_1+1)\eta_3  & \eta_1(\lambda_1-\lambda_2)
	+\lambda_1
	\end{bmatrix}
\end{align*}
with $\eta_1>1$ and $\eta_2\eta_3>1$.
This corresponds to the case $\eta_1>0$ of $\mathscr{A}_{2,2}$.
\end{proof}

\subsection*{Acknowledgements} 
S.M. was  supported by JSPS KAKENHI Grant Numbers 
JP23K20803, JP23K20805, and JP24K00529.
J.S. was supported by JSPS KAKENHI Grant Numbers 
JP21K18588 and JP23K20805.  
K.U. was  supported by JSPS KAKENHI Grant Numbers 
JP 24K00529 and JP24K06805.

\begin{bibdiv}
\begin{biblist}


\bib{Caz}{book}{
   author={Cazenave, Thierry},
   title={Semilinear Schr\"{o}dinger equations},
   series={Courant Lecture Notes in Mathematics},
   volume={10},
   publisher={New York University, Courant Institute of Mathematical
   Sciences, New York; American Mathematical Society, Providence, RI},
   date={2003},
   pages={xiv+323},
   isbn={0-8218-3399-5},
   review={\MR{2002047}},
}

\bib{GiOz}{article}{
   author={Ginibre, J.},
   author={Ozawa, T.},
   title={Long range scattering for nonlinear Schr\"{o}dinger and Hartree
   equations in space dimension $n\geq 2$},
   journal={Comm. Math. Phys.},
   volume={151},
   date={1993},
   number={3},
   pages={619--645},
   issn={0010-3616},
   review={\MR{1207269}},
}

\bib{HLN}{article}{
   author={Hayashi, Nakao},
   author={Li, Chunhua},
   author={Naumkin, Pavel I.},
   title={On a system of nonlinear Schr\"odinger equations in 2D},
   journal={Differential Integral Equations},
   volume={24},
   date={2011},
   number={5-6},
   pages={417--434},
   issn={0893-4983},
   review={\MR{2809614}},
}

\bib{HaNa}{article}{
   author={Hayashi, Nakao},
   author={Naumkin, Pavel I.},
   title={Asymptotics for large time of solutions to the nonlinear
   Schr\"{o}dinger and Hartree equations},
   journal={Amer. J. Math.},
   volume={120},
   date={1998},
   number={2},
   pages={369--389},
   issn={0002-9327},
   review={\MR{1613646}},
}

\bib{HaNa2}{article}{
   author={Hayashi, Nakao},
   author={Naumkin, Pavel I.},
   title={Domain and range of the modified wave operator for Schr\"odinger
   equations with a critical nonlinearity},
   journal={Comm. Math. Phys.},
   volume={267},
   date={2006},
   number={2},
   pages={477--492},
   issn={0010-3616},
   review={\MR{2249776}},
}

\bib{HaNaSu}{article}{
   author={Hayashi, Nakao},
   author={Naumkin, Pavel I.},
   author={Sunagawa, Hideaki},
   title={On the Schr\"{o}dinger equation with dissipative nonlinearities of
   derivative type},
   journal={SIAM J. Math. Anal.},
   volume={40},
   date={2008},
   number={1},
   pages={278--291},
   issn={0036-1410},
   review={\MR{2403321}},
}


\bib{IfTa}{article}{
   author={Ifrim, Mihaela},
   author={Tataru, Daniel},
   title={Global bounds for the cubic nonlinear Schr\"{o}dinger equation
   (NLS) in one space dimension},
   journal={Nonlinearity},
   volume={28},
   date={2015},
   number={8},
   pages={2661--2675},
   issn={0951-7715},
   review={\MR{3382579}},
}


\bib{KaSa}{article}{
   author={Katayama, Soichiro},
   author={Sakoda, Daisuke},
   title={Asymptotic behavior for a class of derivative nonlinear
   Schr\"{o}dinger systems},
   journal={Partial Differ. Equ. Appl.},
   volume={1},
   date={2020},
   number={3},
   pages={Paper No. 12, 41},
   issn={2662-2963},
   review={\MR{4336288}},
}

\bib{KaPu}{article}{
   author={Kato, Jun},
   author={Pusateri, Fabio},
   title={A new proof of long-range scattering for critical nonlinear
   Schr\"{o}dinger equations},
   journal={Differential Integral Equations},
   volume={24},
   date={2011},
   number={9-10},
   pages={923--940},
   issn={0893-4983},
   review={\MR{2850346}},
}

\bib{Kim}{article}{
   author={Kim, Donghyun},
   title={A note on decay rates of solutions to a system of cubic nonlinear
   Schr\"odinger equations in one space dimension},
   journal={Asymptot. Anal.},
   volume={98},
   date={2016},
   number={1-2},
   pages={79--90},
   issn={0921-7134},
   review={\MR{3502373}},
}

\bibitem{Ki}
	Naoyasu Kita, \emph{Existence of blowing-up solutions to some nonlinear
		{S}chr\"{o}dinger equations including nonlinear amplification with small
		initial data}, Preprint, available at OCAMI Preprint Series 2020, 20-2.

\bib{KiMaSeUr}{article}{
   author={Kita, Naoyasu},
   author={Masaki, Satoshi},
   author={Segata, Jun-ichi},
   author={Uriya, Kota},
   title={Polynomial deceleration for a system of cubic nonlinear
   Schr\"{o}dinger equations in one space dimension},
   journal={Nonlinear Anal.},
   volume={230},
   date={2023},
   pages={Paper No. 113216, 22},
   issn={0362-546X},
   review={\MR{4541419}},
}




\bib{KiSh}{article}{
   author={Kita, Naoyasu},
   author={Shimomura, Akihiro},
   title={Large time behavior of solutions to Schr\"odinger equations with a
   dissipative nonlinearity for arbitrarily large initial data},
   journal={J. Math. Soc. Japan},
   volume={61},
   date={2009},
   number={1},
   pages={39--64},
   issn={0025-5645},
   review={\MR{2272871}},
}

%

\bib{LiNiSaSu1}{article}{
   author={Li, Chunhua},
   author={Nishii, Yoshinori},
   author={Sagawa, Yuji},
   author={Sunagawa, Hideaki},
   title={Large time asymptotics for a cubic nonlinear Schr\"{o}dinger
   system in one space dimension},
   journal={Funkcial. Ekvac.},
   volume={64},
   date={2021},
   number={3},
   pages={361--377},
   issn={0532-8721},
   review={\MR{4360614}},
}

\bib{LiNiSaSu2}{article}{
   author={Li, Chunhua},
   author={Nishii, Yoshinori},
   author={Sagawa, Yuji},
   author={Sunagawa, Hideaki},
   title={Large time asymptotics for a cubic nonlinear Schr\"{o}dinger
   system in one space dimension, II},
   journal={Tokyo J. Math.},
   volume={44},
   date={2021},
   number={2},
   pages={411--416},
   issn={0387-3870},
   review={\MR{4379734}},
}

\bib{LiNiSaSu5}{misc}{
	author={Chunhua Li},
	author={Yoshinori Nishii},
	author={Yuji Sagawa},
	author={Hideaki Sunagawa},
	title={Recent advances on Schr\"{o}dinger equations with
dissipative nonlinearities},
	status={preprint}
}

\bib{LiSu}{article}{
   author={Li, Chunhua},
   author={Sunagawa, Hideaki},
   title={On Schr\"{o}dinger systems with cubic dissipative nonlinearities
   of derivative type},
   journal={Nonlinearity},
   volume={29},
   date={2016},
   number={5},
   pages={1537--1563},
   issn={0951-7715},
   review={\MR{3481342}},
}

\bib{LiSu2}{article}{
   author={Li, Chunhua},
   author={Sunagawa, Hideaki},
   title={Corrigendum: On Schr\"{o}dinger systems with cubic dissipative
   nonlinearities of derivative type (2016 {\it Nonlinearity} 29 1537--63)
   [MR3481342]},
   journal={Nonlinearity},
   volume={29},
   date={2016},
   number={12},
   pages={C1--C2},
   issn={0951-7715},
   review={\MR{3580322}},
}

\bib{LS}{article}{
   author={Lindblad, Hans},
   author={Soffer, Avy},
   title={Scattering and small data completeness for the critical nonlinear
   Schr\"odinger equation},
   journal={Nonlinearity},
   volume={19},
   date={2006},
   number={2},
   pages={345--353},
   issn={0951-7715},
   review={\MR{2199392}},
}

\bibitem{Manakov}
Sergei~V. Manakov, \emph{On the theory of two-dimensional stationary
		self-focusing of electromagnetic waves}, JETP Sov. Phys. \textbf{38} (1974),
		no.~2.

\bib{Ma}{article}{
   author={Masaki, Satoshi},
   title={Classification of a class of systems of cubic ordinary
   differential equations},
   journal={J. Differential Equations},
   volume={344},
   date={2023},
   pages={471--508},
   issn={0022-0396},
   review={\MR{4510789}},
}


\bib{Ma2}{misc}{
	author={Masaki, Satoshi},
   title={Partial classification of the large-time behavior of solutions to cubic nonlinear Schr\"{o}dinger systems},
   	status={available as arXiv:2401.00478},
}

\bib{Ma3}{misc}{
	author={Masaki, Satoshi},
   title={Global existence and large-time behavior of solutions to cubic nonlinear Schr\"{o}dinger systems without coercive conserved quantity},
   	status={available as arXiv:2412.00413},
}





\bib{MaSeUr1}{article}{
   author={Masaki, Satoshi},
   author={Segata, Jun-ichi},
   author={Uriya, Kota},
   title={On asymptotic behavior of solutions to cubic nonlinear
   Klein-Gordon systems in one space dimension},
   journal={Trans. Amer. Math. Soc. Ser. B},
   volume={9},
   date={2022},
   pages={517--563},
   review={\MR{4439505}},
}

\bib{MaSeUr2}{article}{
   author={Masaki, Satoshi},
   author={Segata, Jun-Ichi},
   author={Uriya, Kota},
   title={Asymptotic behavior in time of solution to system of cubic
   nonlinear Schr\"odinger equations in one space dimension},
   conference={
      title={Mathematical physics and its interactions},
   },
   book={
      series={Springer Proc. Math. Stat.},
      volume={451},
      publisher={Springer, Singapore},
   },
   isbn={978-981-97-0363-0},
   isbn={978-981-97-0364-7},
   date={[2024] \copyright 2024},
   pages={119--180},
   review={\MR{4775002}},
}



\bib{MuPu}{article}{
   author={Murphy, Jason},
   author={Pusateri, Fabio},
   title={Almost global existence for cubic nonlinear Schr\"{o}dinger
   equations in one space dimension},
   journal={Discrete Contin. Dyn. Syst.},
   volume={37},
   date={2017},
   number={4},
   pages={2077--2102},
   issn={1078-0947},
   review={\MR{3640589}},
}


\bib{Oz}{article}{
   author={Ozawa, Tohru},
   title={Long range scattering for nonlinear Schr\"{o}dinger equations in one
   space dimension},
   journal={Comm. Math. Phys.},
   volume={139},
   date={1991},
   number={3},
   pages={479--493},
   issn={0010-3616},
   review={\MR{1121130}},
}

\bib{Shi}{article}{
   author={Shimomura, Akihiro},
   title={Asymptotic behavior of solutions for Schr\"{o}dinger equations
   with dissipative nonlinearities},
   journal={Comm. Partial Differential Equations},
   volume={31},
   date={2006},
   number={7-9},
   pages={1407--1423},
   issn={0360-5302},
   review={\MR{2254620}},
}


\bib{Ur}{article}{
   author={Uriya, Kota},
   title={Final state problem for systems of cubic nonlinear Schr\"{o}dinger
   equations in one dimension},
   journal={Ann. Henri Poincar\'{e}},
   volume={18},
   date={2017},
   number={7},
   pages={2523--2542},
   issn={1424-0637},
   review={\MR{3665222}},
}

\end{biblist}
\end{bibdiv}

\end{document}